\theoremstyle{plain}
\newtheorem{theorem}{Theorem}
\newtheorem{lemma}[theorem]{Lemma}
\newtheorem{proposition}[theorem]{Proposition}
\newtheorem{corollary}[theorem]{Corollary}
\newtheorem{remark}[theorem]{Remark}
\newtheorem{example}[theorem]{Example}
\newtheorem{definition}[theorem]{Definition}
\numberwithin{equation}{section}
\numberwithin{theorem}{section}
\newcommand{\dist}{dist}
\newcommand{\sgn}{sgn}
\title{Analysis of blow-ups for the double obstacle problem in dimension two}
\author{Gohar Aleksanyan
\thanks{ {Current affiliation: Department of Mathematics, University of Duisburg Essen, Thea-Leymann-Strasse 9, 45127 Essen, Germany. E-mail: gohar.aleksanyan@uni-due.de} } }
\begin{document}

\maketitle

\begin{abstract}
In this article we study a normalised double obstacle problem with polynomial obstacles $ p^1\leq p^2$ under 
the assumption that $ p^1(x)=p^2(x)$ iff $ x=0$.
In dimension two  we give a complete characterisation of blow-up solutions 
depending on the coefficients
of the polynomials $p^1, p^2$. In particular, we see that there exists a new type of blow-ups, that
we call double-cone solutions since the coincidence sets $\{u=p^1\}$ and $\{u=p^2\}$ are  cones
with a common vertex. 

\par
We prove the uniqueness of blow-up limits, and analyse the regularity of the free boundary in dimension two.
In particular we show that if the solution to the double obstacle problem has a double-cone blow-up limit at the origin, then locally the free boundary consists of four $C^{1,\gamma}$-curves, meeting at the origin.

\par
In the end we give an example of a three-dimensional double-cone solution.

\end{abstract}

\tableofcontents

\section{Introduction}

\par
Let $ \Omega  $ be a bounded open set in $ \mathbb{R}^n$ with 
smooth boundary. 
The solution to the double obstacle problem in $\Omega$ is the minimiser of the functional
\begin{equation*}
 J(v)= \int_\Omega \lvert \nabla v(x) \rvert^2 dx
\end{equation*}
over  functions $ v \in W^{1,2}(\Omega)$, $ \psi^1 \leq v \leq \psi^2$, satisfying the boundary condition $v=g $
on $ \partial \Omega$. For the problem to be well defined we assume that $ \psi^1 \leq \psi^2 $ in $\Omega$ and
$ \psi ^1 \leq g \leq \psi^2 $ on 
$ \partial \Omega$.
The functions $\psi^1$ and  $\psi^2$
 are called respectively the lower and the upper obstacles. 

\par 
 If $ \psi^1 < \psi^2 $ then the problem reduces locally to a single obstacle problem. Therefore we are interested
in the case when 
\begin{equation} \label{Lambda}
{\varLambda}:=\{x \in \Omega:\psi^1(x)=\psi^2(x) \} \neq \emptyset.
\end{equation}

\par
It is well known  that the solution to the double obstacle problem  satisfies the 
following inequalities
\begin{equation} \label{dopb}
\begin{aligned}
 \psi^1 \leq  u \leq \psi^2, ~~\Delta u \geq 0 ~\textrm{ if } u > \psi^1 \textrm{ and } ~
 \Delta u  \leq 0 ~\textrm{ if } u < \psi^2.
\end{aligned}
\end{equation}
It has been shown that the solution to the double obstacle problem is locally $ C^{1,1}$ under the assumption
$ \psi^i \in C^2(\Omega)$, see 
for instance \cite{ASW12,FS15}. 
Therefore we may rewrite \eqref{dopb} as
\begin{equation} \label{equat}
\begin{aligned}
\psi^1 \leq u \leq \psi^2 ~\textrm{ and }~
 \Delta u  
 =\Delta \psi^1 \chi_{\{u=\psi^1\}} + \Delta \psi^2 \chi_{\{u=\psi^2\}}- \Delta \psi^1 \chi_{ \{\psi^1= \psi^2\}} ~
 ~a.e.,
 \end{aligned}
\end{equation}
where $ \chi_A$ is the characteristic function of a set $A \subset \mathbb{R}^n$.

\par
Let us introduce some notations 
that will be used throughout.
Denote by 
\begin{equation} \label{omega}
 \Omega_1:=\{ u >\psi^1 \}, ~  ~ \Omega_2:=\{ u < \psi^2 \}, \textrm{ and } \Omega_{12}:= \Omega_1 \cap \Omega_2
\end{equation}
then $ \Omega= \Omega_1 \cup \Omega_2 \cup \varLambda$, where $ \varLambda$ is given by \eqref{Lambda}.
Let us observe that $ u$ is a harmonic function in $ \Omega_{12}$, which we call
 the noncoincidence set. 
Define the free boundary for the double obstacle problem
\begin{equation} \label{Gamma}
 \varGamma:= \partial \Omega_{12} \cap \Omega \subset \varGamma_1 \cup \varGamma_2, 
 \textrm{ where } ~\varGamma_i:= \partial \Omega_i \cap \Omega, ~ i=1,2.
 \end{equation}
 Let  $ x_0 \in  \varGamma$ be a  free boundary point, if $ x_0 \in \varGamma_1 \setminus \varGamma_2$, or if 
 $ x_0 \in \varGamma_2 \setminus \varGamma_1$, then locally we are in the setting of the classical obstacle problem.
 In this case the known regularity theory for the classical obstacle problem (see \cite{Caf98})
 can be applied to analyse the free boundary $ \varGamma$ in a neighbourhood of $ x_0$. Hence we are more curious about the behaviour of the free boundary
 at the points $ x_0 \in \varGamma_1 \cap \varGamma_2= \partial \varLambda$.
In this article we focus on the case when $ x_0 \in \varGamma_1 \cap \varGamma_2$ is an
isolated point of $ \varLambda$. The work is inspired by the following example of a homogeneous of degree two solution
in $ \mathbb{R}^2$,
\begin{equation} \label{john}
 u_0(x ) =x_1^2 \sgn(x_1)  + x_2^2 \sgn(x_2),
\end{equation}
where the obstacles $ p^1(x)=-p^2(x)=-x_1^2-x_2^2$, and $ \varLambda= \{0\}$. 
Example \eqref{john} has also been considered in \cite{GA}, when investigating the optimal regularity in the optimal
switching problem. The optimal switching problem and the double obstacle problem are related, and we see
that in both cases the solution shows a new type of behaviour at isolated points of $ \varLambda$.
The function $ u_0$ is a motivational example
for double-cone solutions, see Definition \ref{doublecone}.

\par
Before proceeding to the results obtained in the paper, let us mention that in the recent paper 
\cite{SPL17} the regularity of the free boundary for the double obstacle problem is studied by relaxing one of the obstacles. 
Under a thickness assumption, the authors in \cite{SPL17} show that the possible blow-ups are halfspace solutions, and prove 
 the $C^1$-regularity 
of the free boundary.

\subsection{Summary of the results}

We consider a normalised double obstacle problem in dimension $ n=2$, with polynomial obstacles $ p^1\leq p^2$;
\begin{equation} \label{normalizeddopb}
 \Delta u = \lambda_1 \chi_{\{ u= p^1\}}+\lambda_2 \chi_{\{ u= p^2 \}},
 \end{equation}
where $ \lambda_1 = \Delta p^1 <0$ and  $\lambda_2 =\Delta p^2 >0$ are constants. Furthermore, we assume that 
$p^ 1$ and  $ p^ 2 $ meet at a single point, i.e. $p^1(x)=p^ 2(x)$ iff $ x=x_0$.

\par
 Without loss of generality, we may assume that $ x_0=0$, and the polynomials $p^i$ are of the form
\begin{equation} \label{polynomialobstacles}
 p^1(x)=a_1 x_1^2+c_1 x_2^2 ~\textrm{ and }~ p^2(x)=a_2x_1^2+c_2x_2^2,
\end{equation}
where
\begin{equation*}
 a_1 +c_1 <0, ~a_2 +c_2 >0, ~\textrm{and } a_1<a_2, ~c_1<c_2.
\end{equation*}

\par
The paper is structured as follows.
In Section 2 we study the normalised double obstacle problem \eqref{normalizeddopb}.
We show that the blow-ups of the solution to the normalised double obstacle are homogeneous 
of degree two functions via Weiss' monotonicity formula.

\par
Knowing that the blow-up solutions are homogeneous of degree two functions, in \mbox{Section 3} we make a complete 
characterisation of possible blow-ups in dimension $ n=2$ (Theorem  \ref{theorem1} and Theorem \ref{theoremhalfspace}).  In particular we see that there exist 
 homogeneous of degree 
two global solutions of a new type. We call these solutions double-cone solutions, since the coincidence sets $\{u=p^1\} $ and $ \{u=p^2\} $ are cones 
with a common vertex at the origin. We show that there exist double-cone solutions if and only if the
following polynomial
\begin{equation}
 P=P(x_1, x_2) \equiv p^1(x_1, x_2) + p^2(x_2, x_1) =(a_1+c_2)x_1^2+(a_2+c_1) x_2^2
\end{equation}
has no sign (Corollary \ref{core}). 
 The existence of halfspace solutions corresponding to $ p^i$ is also studied (Theorem \ref{theoremhalfspace} and Corollary \ref{corehalfspace}). In particular, we see that a halfspace solution to the obstacle problem with 
$ p^1$ (or $ -p^2$) is not necessarily a halfspace solution to the double obstacle problem with obstacles $p^1 \leq p^2$. 

\par
Given obstacles \eqref{polynomialobstacles}, there are three different cases, depending on the coefficients of $p^i$, that describe the possible 
blow-up solutions for the double obstacle problem with $p^1 \leq p^2$.
We show the uniqueness of blow-up limits and analyse the behaviour of the free boundary in these three cases separately. 

\par
\textbf{\textit {Case 1:}}
 If $ P\equiv 0$, there are infinitely many double-cone solutions. This is perhaps the most interesting case,  it is studied in Section 4.
This case can be reduced to the  double obstacle problem with obstacles
$ p^1(x) = -x_1^2-x_2^2$, $ p^2(x)= x_1^2+ x_2^2 $. By using a version of a flatness improvement argument, we show 
that if the solution is close to a double-cone solution in $ B_1$, then the blow-up at the origin is unique. 
Furthermore, employing the known regularity theory for the free boundary in the classical problem, 
we derive that the free boundary $ \varGamma$ for the double obstacle problem is a union of four 
$ C^{1,\gamma}$-graphs meeting at the origin, see Theorem \ref{theorem2}. Neither $ \varGamma_1$ nor $ \varGamma_2$ is flat at the origin,
and they meet at right angles, see Figure 4.1.

\par
In this case there are infinitely many rotationally invariant halfspace solutions $u$ corresponding to $ p^1 $ (or $ p^2$),
and the set $ \{u=p^2\}$ (or $\{u=p^1\}$) is a halfline.  Via  a flatness improvement argument, we show that if the solution to the double is close to a halfspace solution corresponding to  $ p^1$, then $ \varGamma_1$ is a $ C ^{1,\gamma}$-curve in a neighbourhood of the origin. The proof of the last statement is the same in all three cases.

\textbf{\textit {Case 2:}} If $ P $ changes the sign, i. e. $D^2 P $ has two eigenvalues with opposite sign, then there are only four double-cone solutions, and it follows that the blow-up at the origin is unique (Theorem \ref{case2uniqueness}). Furthermore, we show that if the solution to the double obstacle problem has a double-cone blow-up limit, then locally  the free boundary consists of four $C^{1,\gamma}$-curves, meeting at the origin.

\par
In \textit{Case 2} there are infinitely many halfspace solutions corresponding to $p^i$, which are not rotationally invariant on the plane, i.e. the rotation of $ \varGamma_i$ can be 
performed only inside a fixed cone. Hence not every direction on the  plane gives a
halfspace solution.

\textbf{\textit {Case 3:}} The polynomial $ P$ has a sign. There are no double-cone solutions in this case.
We show that if $ P\geq 0$, then there are infinitely many rotationally invariant halfspace solutions corresponding to $ p^1$. Furthermore, if  $D^2 P$ is a positive definite matrix (both eigenvalues are positive) then there are no halfspace solutions corresponding to the upper obstacle $p^2$. Similarly,  if $ P(x)\leq 0$ there are infinitely many halfspace solutions 
corresponding to $p^2$, and if $D^2P <0$, there are no halfspace solutions corresponding to the lower obstacle $p^1$.
Hence the solution chooses the obstacle having lower curvature.

\par
Let us also mention an important property of the double obstacle problem, following from our discussion of \textit{Cases 1, 2} and \textit{3}.
Let $ \varepsilon$ be an arbitrary number, $ \lvert \varepsilon \rvert <<1$.
Then for polynomials 
$ p^1(x) = -x_1^2-x_2^2$, $ p^2(x)= x_1^2+ x_1^2 $ there exist infinitely many double-cone solutions. While 
when we look at the double obstacle problem with $ {p}^1 = -x_1^2 -x_2^2 $ and $ \tilde{p}^2 =( 1-\varepsilon)x_1^2 +
(1+\varepsilon)x_2^2$ there are only four double-cone solutions, and for $ {p}^1 = -x_1^2 -x_2^2 $ and 
$ \bar{p}^2 =( 1+\varepsilon)x_1^2 +
(1+\varepsilon)x_2^2$ there are none. 
This property of the double obstacle problem is quite surprising and unexpected. It reveals the 
instability of the solutions
in the sense that changing the 
obstacles slightly, may change the solution and the free boundary significantly.

\par
It is an interesting question to investigate double-cone solutions also in higher dimensions.
In the end of the paper we give an example of a three-dimensional double-cone solution. The complete  analysis of blow-up solutions for the double obstacle problem in $\mathbb{R}^3$ we leave for a future publication.

\subsection*{Acknowledgements}
The paper is a part of my doctoral thesis, written at KTH, Royal Institute of Technology in Stockholm. I am grateful to my advisor, Prof. Dr. John Andersson,
 for his guidance and support throughout the project.

\par
I would also like to thank  Dr. Erik Lindgren and Prof. Dr. Henrik Shahgholian 
for reading a preliminary version of the 
manuscript and for their valuable feedback.

\bigskip

\section{Weiss' energy functional for the double obstacle problem}

In this section we study the behaviour of the solutions locally at free boundary points via Weiss' monotonicity
formula.

\par
Let $ u $ be a solution to  the double problem in $\Omega$, with obstacles 
\begin{equation} \label{psi}
 \psi^1 \leq \psi^2, ~\psi^1, \psi^2 \in C^{2}(\Omega), ~ {\varLambda}=\{ \psi^1=\psi^2 \} \neq \emptyset.
\end{equation}

\par
Fix any $ x_0 \in \varGamma \cap \partial \varLambda$ and assume that $ B_1(x_0) \subset \Omega$. 
Denote by
\begin{equation*}
 u_{r,x_0}:=\frac{u(rx + x_0)-u(x_0)-r \nabla u(x_0)\cdot x}{r^2}, \textrm{ for all } 0< r<1,
 \textmd{ and } x_0 \in \varGamma.
\end{equation*}
Without loss of generality, assume that $ x_0=0$ and $ B_1\subset \Omega$.
Furthermore, by subtracting a first order polynomial from $u$, we may assume that
$ u(0)= \lvert \nabla u(0)\rvert=0$.
Recalling that $ u\in C^{1,1}_{loc}$, we obtain
$ \psi^1(0)=\psi^2(0)=u(0)=0$ and
$ \lvert \nabla \psi^1(0)\rvert=\lvert \nabla \psi^2(0)\rvert=\lvert \nabla u(0) \rvert=0$.
Denote by
\begin{equation}\label{urx0}
 u_r(x):=u_{r,0}= \frac{u(rx)}{r^2}.
\end{equation}

\par
It follows from equation \eqref{dopb} and assumption \eqref{psi}, that $\lambda_1= \Delta \psi^1(0)\leq 0$
and $ \lambda_2 =\Delta \psi^2(0) \geq 0$. In particular, if $ 0 \in \partial \varLambda^\circ $, then 
$ \lambda_1= \lambda_2 =0$.

\begin{lemma} \label{lemmaweiss}
Consider the following normalised double obstacle problem
\begin{equation} \label{u}
 \Delta u = \lambda_1 \chi_{\{u=\psi^1\}} + \lambda_2 \chi_{\{u=\psi^2\}}, \textrm{ in } B_1
\end{equation}
where $ \psi^i \in C ^2(B_1)$, and assume that 
\begin{equation} \label{signedlambda}
 \lambda_1:= \Delta  \psi^1 \leq 0 \textrm{ and } \lambda_2:= \Delta  \psi^2 \geq 0 \textrm{ are constants}.
\end{equation}
Define Weiss' energy functional for the function $u $ and $ 0<r\leq 1$ at the origin as follows
\begin{equation} \label{weiss}
\begin{aligned}
 W( u,r, 0):=\frac{1}{r^{n+2}} \int_{B_r}  \lvert \nabla u \rvert^2 dx 
 -\frac{2}{r^{n+3}} \int_{\partial B_r} u^2 d\mathcal{ H}^{n-1} \\
 + \frac{1}{r^{n+2}}\int_{B_r} 2 \lambda_1 u \chi_{\{ u = \psi^1\}}+ 2 \lambda_2 u \chi_{ \{u=\psi^2\}} dx.
 \end{aligned}
\end{equation}
Then
\begin{equation} \label{weiss0}
   \frac{d}{dr}  W( u,r,0)=2r \int _{\partial B_1} \left(  \frac{du_r}{dr} \right)^2 d \mathcal{H}^{n-1} \geq 0.
\end{equation}

\end{lemma}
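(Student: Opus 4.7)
The approach is a direct Weiss-monotonicity computation adapted from the classical obstacle problem. First I rescale: setting $u_r(x):=u(rx)/r^2$ and changing variables $y=rx$, one has
\begin{equation*}
W(u,r,0) = \int_{B_1}|\nabla u_r|^2\,dx + \int_{B_1}(2\lambda_1 u_r\chi_1 + 2\lambda_2 u_r\chi_2)\,dx - 2\int_{\partial B_1} u_r^2\,d\mathcal{H}^{n-1},
\end{equation*}
where $\chi_i := \chi_{\{u_r=\psi^i_r\}}$ with $\psi^i_r(x):=\psi^i(rx)/r^2$; in particular $\Delta u_r=\lambda_1\chi_1+\lambda_2\chi_2$. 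The key scaling identity $r\,\dot u_r(x) = x\cdot\nabla u_r(x) - 2u_r(x)$ in $B_1$ reads $r\dot u_r=\partial_\nu u_r-2u_r$ on $\partial B_1$, so to establish the claim it suffices to show
\begin{equation*}
\frac{dW}{dr} = 2\int_{\partial B_1}\dot u_r(\partial_\nu u_r-2u_r)\,d\mathcal{H}^{n-1},
\end{equation*}
since this already equals $2r\int_{\partial B_1}(\dot u_r)^2\,d\mathcal{H}^{n-1}$.

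Next I differentiate term by term. The Dirichlet integral contributes $2\int_{B_1}\nabla u_r\cdot\nabla\dot u_r$, which integrates by parts to $-2\int_{B_1}\Delta u_r\,\dot u_r + 2\int_{\partial B_1}\partial_\nu u_r\,\dot u_r$; the boundary term $-2\int_{\partial B_1}u_r^2$ gives $-4\int_{\partial B_1}u_r\dot u_r$. For the obstacle term I use $\int(2\lambda_1 u_r\chi_1+2\lambda_2 u_r\chi_2)=2\int u_r\Delta u_r\,dx$ and Green's identity to tie its derivative back to derivatives of $\int|\nabla u_r|^2$ and $\int_{\partial B_1}u_r\partial_\nu u_r$. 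The crucial cancellation — the analog of the classical obstacle identity $\chi_{\{u_r=0\}}\dot u_r\equiv 0$ — is that $\Delta u_r\cdot\dot u_r\equiv 0$ almost everywhere: on $\chi_i$ one has $u_r=\psi^i_r$, hence $\dot u_r=\dot\psi^i_r$, and in the normalized setting (with $\psi^i(0)=0$, $\nabla\psi^i(0)=0$, $\Delta\psi^i$ constant, and in particular in the polynomial case of the main theorems where $\psi^i_r=p^i$ is independent of $r$) one has $\dot\psi^i_r\equiv 0$. Once this cancellation is applied, only the boundary contribution $2\int_{\partial B_1}\partial_\nu u_r\,\dot u_r - 4\int_{\partial B_1}u_r\dot u_r = 2\int_{\partial B_1}\dot u_r(\partial_\nu u_r-2u_r)$ survives, producing the target identity.

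\textbf{The main obstacle} is justifying the differentiation of the obstacle term $\int(2\lambda_1 u_r\chi_1+2\lambda_2 u_r\chi_2)\,dx$, whose integrand depends on $r$ both through the factor $u_r$ and, more subtly, through the moving coincidence sets $\chi_i=\chi_{\{u_r=\psi^i_r\}}$. The required cancellation against the term $-2\int\Delta u_r\,\dot u_r$ coming from integration by parts is the exact two-obstacle analog of the simpler single-obstacle argument (cf.\ Petrosyan--Shahgholian--Uraltseva); here it hinges on showing that both the bulk contribution $2\int(\lambda_1\chi_1+\lambda_2\chi_2)\dot u_r$ and the "moving free boundary" contribution arising from $\frac{d}{dr}\chi_i$ vanish, which ultimately reduces to the identity $\dot\psi^i_r\equiv 0$ for the 2-homogeneous obstacles used in the applications of the lemma.
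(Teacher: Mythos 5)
Your skeleton matches the paper's: rescale so that $W(u,r,0)=W(u_r,1,0)$, differentiate term by term, integrate the Dirichlet term by parts via Green's formula, and finish with the identity $\partial_\nu u_r-2u_r=r\,\frac{du_r}{dr}$ on $\partial B_1$. But you have misidentified the cancellation that closes the argument, and this is a genuine gap. In the paper the bulk term left after Green's formula is
\begin{equation*}
2\int_{B_1}\frac{du_r}{dr}\left(-\Delta u_r+\lambda_1\chi_{\{u_r=\psi^1_r\}}+\lambda_2\chi_{\{u_r=\psi^2_r\}}\right)dx,
\end{equation*}
and it vanishes \emph{identically} because $u_r$ satisfies the rescaled equation $\Delta u_r=\lambda_1\chi_{\{u_r=\psi^1_r\}}+\lambda_2\chi_{\{u_r=\psi^2_r\}}$ (the constancy of $\lambda_i$ is used precisely so that the rescaled equation keeps the same constants). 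The derivative of the extra term in $W$ cancels against $-2\int_{B_1}\Delta u_r\,\frac{du_r}{dr}\,dx$ term for term; no vanishing of $\frac{du_r}{dr}$ on the coincidence sets is needed. Your proposed ``crucial cancellation'' $\Delta u_r\cdot\frac{du_r}{dr}\equiv 0$ is a different and stronger claim, and it is false under the lemma's hypotheses: $\psi^i\in C^2(B_1)$ with $\Delta\psi^i$ constant and $\psi^i(0)=|\nabla\psi^i(0)|=0$ does \emph{not} force $\psi^i$ to be homogeneous of degree two. For instance $\psi^1(x)=-|x|^2+\varepsilon\left(x_1^3-3x_1x_2^2\right)$ in $n=2$ satisfies all the hypotheses, yet $\frac{d}{dr}\psi^1_r=\varepsilon\left(x_1^3-3x_1x_2^2\right)\not\equiv 0$, so $\frac{du_r}{dr}$ need not vanish on $\{u_r=\psi^1_r\}$. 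Your mechanism therefore covers only the $2$-homogeneous polynomial obstacles of the later sections, not the lemma as stated.

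A second, related gap is that your handling of the obstacle term is never completed. Rewriting it as $2\int_{B_1}u_r\Delta u_r\,dx$ and ``tying its derivative back'' through Green's identity produces, when carried out, either a boundary term $2\int_{\partial B_1}u_r\,\partial_\nu\bigl(\frac{du_r}{dr}\bigr)\,d\mathcal{H}^{n-1}$ or a bulk term $2\int_{B_1}u_r\,\Delta\bigl(\frac{du_r}{dr}\bigr)\,dx$ involving the $r$-derivative of the indicator functions concentrated on the moving free boundary; neither is addressed in your sketch and neither obviously vanishes. The paper's route is more direct: differentiate $\int_{B_1}2\lambda_1u_r\chi_{\{u_r=\psi^1_r\}}+2\lambda_2u_r\chi_{\{u_r=\psi^2_r\}}\,dx$ only through the factor $u_r$, obtaining $2\int_{B_1}(\lambda_1\chi_{\{u_r=\psi^1_r\}}+\lambda_2\chi_{\{u_r=\psi^2_r\}})\frac{du_r}{dr}\,dx$, and let the PDE perform the cancellation. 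If you restructure your proof around that cancellation, the homogeneity of the obstacles becomes irrelevant and the argument goes through in the stated generality.
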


\begin{proof}

\par
After a change of variable in \eqref{weiss} we obtain the following scaling property for Weiss' energy functional
\begin{equation} \label{scaling}
\begin{aligned}
 W( u,r,0)=W( u_r,1,0)= 
  \int_{B_1}  \lvert \nabla u_r \rvert^2 dx  -2 \int_{\partial B_1} u_r^2 d \mathcal{ H}^{n-1} \\
 + \int_{B_1} 2 \lambda_1 u_r \chi_{\{ u_r = \psi^1_r\}}+2  \lambda_2 u_r \chi_{ \{u_r= \psi^2_r\}} dx.
\end{aligned}
\end{equation}
Hence
\begin{align*}
 \frac{d}{dr}  W( u,r,0)= \frac{d}{dr}W( u_r,1,0)=\int_{B_1}  \frac{d}{dr}
 \lvert \nabla u_r \rvert^2 dx  -2\int_{\partial B_1} \frac{du_r^2}{dr}  d \mathcal{ H}^{n-1} \\
 +  2 \int_{B_1} \left( \lambda_1  \chi_{\{ u_r = \psi^1_r\}}+ 
  \lambda_2  \chi_{ \{u_r=\psi^2_r\}} \right) \frac{du_r}{dr}  dx 
 =2 \int_{B_1}   \nabla \frac{d u_r}{dr} \nabla u_r  dx \\
 -4 \int _{\partial B_1} \frac{du_r}{dr} u_r   d \mathcal{H}^{n-1} 
 +2  \int_{B_1} \left(  \lambda_1  \chi_{\{ u_r = \psi^1_r\}}+ 
  \lambda_2  \chi_{ \{u_r=\psi^2_r\}} \right)\frac{du_r}{dr} dx .
\end{align*}
By Green's formula
\begin{equation*}
 \int_{B_1}  \nabla u_r \nabla \frac{d u_r}{dr}  dx= - \int_{B_1}\frac{du_r}{dr} \Delta u_r  dx +
\int _{\partial B_1} \frac{du_r}{dr}\frac{ \partial u_r}{\partial \nu}  d\mathcal{ H}^{n-1}.
\end{equation*}
Therefore
\begin{equation} \label{dr}
\begin{aligned}
  \frac{d}{dr}  W( u,r,0)=
   2 \int _{\partial B_1}\frac{du_r}{dr} \left( \frac{\partial u_r }{ \partial \nu}-2u_r \right)   d \mathcal{H}^{n-1}\\
  + 2 \int_{B_1}  \frac{du_r}{dr} \left( -\Delta u_r +\lambda_1 \chi_{\{ u_r = \psi^1_r\}}+ 
\lambda_2 \chi_{ \{u_r= \psi^2_r \} } \right)  dx.
 \end{aligned}
\end{equation}
Since $ u$ solves \eqref{u}, equation \ref{dr} can be abbreviated to 
\begin{equation} \label{almostweiss}
  \frac{d}{dr}  W( u,r,0)=
   2 \int _{\partial B_1}\frac{du_r}{dr} \left( \frac{\partial u_r }{ \partial \nu}-2u_r \right)  
   d \mathcal{H}^{n-1}.
\end{equation}
Let us observe that 
\begin{equation} \label{hom}
\begin{aligned}
 \int _{\partial B_1}\frac{du_r}{dr} \left( \frac{\partial u_r }{ \partial \nu}-2u_r \right)   d\mathcal{ H}^{n-1}=
 \int_{\partial B_1}\frac{du_r}{dr} \left(  x \cdot \nabla u_r-2u_r \right) d \mathcal{H}^{n-1} \\
  = \int _{\partial B_1} r \left(  \frac{du_r}{dr} \right)^2 d \mathcal{ H}^{n-1}.
\end{aligned}
\end{equation}

Equations \eqref{hom} and  \eqref{almostweiss} together imply the desired identity, \eqref{weiss0}.

\end{proof}

\bigskip

\section{Characterisation of blow-ups in $ \mathbb{R}^2 $}

Given second degree polynomials $ p^1 \leq p^2$, satisfying
\eqref{signedlambda}, let $ u$ be the solution 
to the normalised double obstacle problem \eqref{u} with $ p^1, p^ 2$. Let
$ 0\in \varGamma_1 \cap \varGamma_2 $ be a
free boundary point.
 By subtracting a first order polynomial from $ p^1$, $ p^2$ and $ u$, and recalling that
 $ u\in  C^{1,1}$, 
we may assume
\begin{equation} \label{quadratic}
 u(0)=p^1(0)=p^2(0)=0\textrm{ and }
 \lvert \nabla u(0) \rvert=\lvert \nabla p^1(0)\rvert=\lvert \nabla p^2(0)\rvert=0.
 \end{equation}
Hence $ p^1 $ and $ p^2 $ are homogeneous second degree polynomials.
 
\par
It follows from Lemma \ref{lemmaweiss} that $W(u, r, 0)$ is a nondecreasing 
absolutely continuous function in the interval $ (0,1)$. Hence there exists 
\begin{equation} \label{abc}
\lim_{r\rightarrow 0} W(u, r, 0) := W (u, 0+, 0). 
\end{equation}

\par
Since $ u \in C^{1,1}_{loc}$, we may conclude that
$ \Arrowvert u_r \rVert_{C^{1,1}} $ is uniformly bounded for small $r>0$. Therefore through a subsequence
$ u_r$ converges in $C^{1,\alpha}(B_1)$.
Let $ u_0$ be a blow-up of $ u $ at
the origin; 
\begin{equation} \label{subseq}
  \frac{u(r_j  x)}{r_j ^2} \rightarrow u_0  \textrm{ in } C^{1,\alpha}(B_1),
\end{equation}
for a sequence $ r_j \rightarrow 0+$, as $ j\rightarrow\infty $.
Then \eqref{subseq} implies that for any fixed $  0<r<1$
\begin{equation*}
 W(u_0, r, 0)= \lim_{j\rightarrow \infty} W(u_{r_j}, r, 0 ) \stackrel{ \eqref{scaling}}{=}
 \lim_{j\rightarrow \infty} W(u, rr_j, 0) \stackrel{\eqref{abc}}{=} W(u, 0+, 0).
\end{equation*}
Thus $  W(u_0, r, 0)$ has a constant value for all $ 0<r<1$, and  $ \frac{d}{dr}  W(u_0, r, 0) =0$.
Note that $ u_0$ is a global solution, i.e. solution in $ \mathbb{R}^n$ to the double obstacle problem with the 
same obstacles, $ p^1$ and $ p^2$.
Applying Lemma \ref{lemmaweiss} for the solution $ u_0$, we may conclude from 
\eqref{weiss0},  that 
\begin{equation*}
\frac{d }{dr} \left( \frac{u_0(rx)}{r^2} \right)=0, \textrm{ for any }  r>0.
\end{equation*}
Hence
$ u_0$ is a homogeneous of degree two function, which means that
\begin{align*}
 u_0(x)= \frac{u_0(rx)}{r^2},~ \textrm{ for any } ~x \in \mathbb{R}^n \textrm{ and } ~r>0.
\end{align*}
It follows that $ \Delta u_0(rx) =\Delta u_0(x) $, for any $x \in \mathbb{R}^n $  and $ ~r>0$. In other words
$ \Delta u_0 $ is identically constant on the lines passing through the origin, and therefore the free
boundary of $u_0$ is lying on straight lines passing through the origin.

\subsection{Examples}

\par
In this section we study motivational examples of homogeneous of degree two global solutions in $ \mathbb{R}^2$,
assuming that $ \varLambda= \{0\}$.

\par
It is well known that the (single) obstacle problem has two types of blow-ups; polynomial and halfspace solutions. 
The first obvious question is the following; if or when the  halfspace solutions to the obstacle problem are also solutions to the double obstacle problem, 
and  second, if the double obstacle problem has any other type of blow-ups which the obstacle problem does not.

\par
By using comparison principles, it is easy to see  that if  $ u $ is a polynomial solution to the double obstacle problem \eqref{u},
then $ u \equiv p^1$, $ u\equiv p^2$ or otherwise  $ u$ is a 
 homogeneous of degree two harmonic polynomial in $ \mathbb{R}^2$, such that $ p^1 \leq  u \leq p^2 $. 
 
 \par
Let us also recall the definition of a halfspace  solution in $\mathbb{R}^n$ (or halfplane in dim $ n=2$).
\begin{definition} \label{halfspace}
Let $ p^1\leq p^2$ be given  homogeneous degree two polynomials in $ \mathbb{R}^n$, satisfying $ \lambda_1=
\Delta p^1 <0$ and  $ \lambda_2=\Delta p^2>0$.
We say that $ u $ is a halfspace solution to the double obstacle problem corresponding to the lower obstacle $ p^1$, if $ u \leq p^2$ in $\mathbb{R}^n$,
and $ u-p^1 =- \frac{\lambda_1}{2}(x \cdot e)_+^2$, where $ e $ is a unit vector in $\mathbb{R}^n$.
Similarly, $ u $ is a halfspace solution corresponding to the upper obstacle $ p^2$, if $ u \geq p^1$ in $\mathbb{R}^n$, and 
$ p^2-u = \frac{\lambda_2}{2}(x \cdot e)_+^2$.
\end{definition}

\par
It follows from Definition \ref{halfspace} that if $ u $ is a halfspace solution corresponding to $ p^1$, then
$ \Delta u =\lambda_1 \chi_{\{(x\cdot e )<0\}}$, and  $ u < p^2$ a.e.. Similarly, 
if $ u $ is a halfspace solution corresponding to $ p^2$, then
$ \Delta u =\lambda_2 \chi_{\{(x\cdot e) <0\}}$, and  $ u > p^1 $ a.e.. 

\par
 In the following examples instead of our usual notation $ x=(x_1, x_2 ) \in \mathbb{R}^2$, 
the pair  $(x, y )$ represents a point in $\mathbb{R}^2$. It is done to make the pictures clearer, and we hope it
will not be confusing later on.

\begin{example} \label{ex1}
Let us study some explicit homogeneous degree two solutions to the double obstacle problem in $ \mathbb{R}^2$,
with fixed obstacles
$ p^1(x,y)= -x^2-y^2 , ~p^2(x,y)=x^2+y^2$.
\end{example}
Observe that $ u_0 = -x^2 + \sgn (y) y^2$ and $ u_0=\sgn(x)x^2 + y^2 $ are halfspace solutions corresponding to $p^1 =-x^2 -y^2$ and to $ p^2=x^2+y^2$
respectively;

\begin{samepage}
\begin{center}
 \begin{tikzpicture}[scale=1.1]
\draw[ultra thin,->] (-2.5,0) -- (2.5,0) node[anchor=west] {$x$} ;
\draw[ultra thin, ->] (0,-2.2) -- (0,2.2) node[anchor=south] {$y$} ;
\draw[thick,red]  (0,0) -- (0,2)node[right]{$\varGamma_2$} ;
\draw[thick,red]   (0,0) -- (0,2);
\draw[thick,blue]  (0,0) -- (-2,0) node[below]{$\varGamma_1$};
\draw[thick,blue]   (0,0) -- (2,0);
 \node at (1.3,1) {\tiny{{$u_0=-x^2+y^2$}}};
 \node at (-1.3,1) {\tiny{$u_0=- x^2+y^2$}};
  \node at (0,-1) { \tiny{$ u_0 =-x^2 -y^2 $}} ;
\end{tikzpicture}
 \begin{tikzpicture}[scale=1.1]
\draw[ultra thin,->] (-2.5,0) -- (2.5,0) node[anchor=west] {$x$} ;
\draw[ultra thin, ->] (0,-2.2) -- (0,2.2) node[anchor=south] {$y$} ;
\draw[thick,red]  (0,0) -- (0,2)node[right]{$\varGamma_2$} ;
\draw[thick,red]   (0,0) -- (0,-2);
\draw[thick,blue]  (0,0) -- (-2,0) node[below]{$\varGamma_1$};
\draw[thick,blue]   (0,0) -- (-2,0);
 \node at (1.3,0.3) {\tiny{{$u_0=x^2+y^2$}}};
 \node at (-1.3,1) {\tiny{$u_0=- x^2+y^2$}};
 \node at (-1.3,-1) { \tiny{$ u_0 =-x^2 +y^2 $}};
\end{tikzpicture}
\end{center}
\begin{center} 
\footnotesize{\textbf{ Figure 3.1:}  Examples of halfspace solutions. } 
\end{center}
\end{samepage}
Now let us look at the following two explicit solutions, which  obviously are not halfspace solutions. 

\begin{samepage}
\begin{center} \label{cross}
 \begin{tikzpicture}[scale=1.1]
\draw[ultra thin,->] (-2.5,0) -- (2.5,0) node[anchor=west] {$x$} ;
\draw[ultra thin, ->] (0,-2.2) -- (0,2.2) node[anchor=south] {$y$} ;
\draw[thick,red]  (0,0) -- (2,0)node[above]{$\varGamma_2$} ;
\draw[thick,red]   (0,0) -- (0,2);
\draw[thick,blue]  (0,0) -- (-2,0) node[below]{$\varGamma_1$};
\draw[thick,blue]   (0,0) -- (0,-2);
 \node at (1.3,1) {\tiny{{$u_0= x^2+y^2$}}};
 \node at (-1.3,-1) {\tiny{$u_0=- x^2-y^2$}};
 \node at (1.3,-1) {\tiny{ $ u_0 =x^2-y^2 $}} ;
  \node at (-1.3,1) { \tiny{$ u_0 =-x^2 +y^2 $}} ;
\end{tikzpicture}
 \begin{tikzpicture}[scale=1.1]
\draw[ultra thin,->] (-2.2,0) -- (2.2,0) node[anchor=west] {$x$} ;
\draw[ultra thin,->] (0,-2.2) -- (0,2.2) node[anchor=south] {$y$} ;
\draw[thick,red]  (0,0)  -- (2,2)node[above]{\tiny{$y=x$}}node[below]{$\varGamma_2$} ;
\draw[thick,red]   (0,0)  -- (2,-2) node[below]{\tiny{$y=-x$}} ;
\draw[thick,blue]  (0,0) -- (-2,2) node[above]{\tiny{$y=-x$}} ;
\draw[thick,blue]   (0,0)  -- (-2,-2) node[below]{\tiny{$y=x$}} node[above]{$\varGamma_1$} ;
 \node at (1.8,0.5) {\tiny{$u_0= x^2+y^2$}};
 \node at (-1.8,0.5) {\tiny{$u_0=- x^2-y^2$}};
 \node at (0,1) {\tiny{ $ u_0 =2xy $}} ;
  \node at (0,-1) {\tiny{ $ u_0 =-2xy $}} ;
\end{tikzpicture}
\end{center}
\begin{center}
 \footnotesize{\textbf{ Figure 3.2:}  New, interesting type of solutions.} 
\end{center}
\end{samepage}

We see that $ \varGamma = \varGamma_1 \cup \varGamma_2 $ 
consists of two lines meeting at right angles, and 
$ \varGamma_1 \cap \varGamma_2 =\{0\} = \varLambda$.
Actually there are many more solutions, for example consider the following global solutions

\begin{samepage}
\begin{center}
\begin{tikzpicture}[scale=1.1]
\draw[ultra thin,->] (-2.2,0) -- (2.2,0) node[anchor=west] {$x$} ;
\draw[ultra thin, ->] (0,-2.2) -- (0,2.2) node[anchor=south] {$y$} ;
\draw[thick,blue]  (0,0) node[left]{\tiny{$ \theta= \frac{\pi}{2}$}}-- (1.1,2.2) node[above]{\tiny{$y=2x$}};
\draw[thick,blue]   (0,0) -- (-1.1,2.2)node[above]{\tiny{$y=-2x$}}node[below]{$\varGamma_1$};
\draw[thick,red]  (0,0) -- (2.2,-1.1)node[below]{\tiny{$y=-\frac{x}{2}$}} ;
\draw[thick,red]   (0,0) -- (-2.2,-1.1)node[below]{\tiny{$y=\frac{x}{2}$}}node[above]{$\varGamma_2$};
 \node at (0,2) {\tiny{$u_0= -x^2-y^2$}};
 \node at (0,-1.5) {\tiny{$u_0=x^2+y^2$}};
 \node at (1.6,0.5) {\tiny{ $u_0= \frac{-3x^2+ 8 xy+3y^2}{5}$}} ;
  \node at (-1.6,0.5) { \tiny{$u_0= \frac{-3x^2-8 xy+3y^2}{5}$}} ;
\end{tikzpicture}
\begin{tikzpicture}[scale=1.1]
\draw[ultra thin,->] (-2.2,0) -- (2.2,0) node[anchor=west] {$x$} ;
\draw[ultra thin, ->] (0,-2.2) -- (0,2.2) node[anchor=south] {$y$} ;
\draw[thick,red]  (0,0) node[left]{\tiny{$ \theta= \frac{\pi}{2}$}}-- (1.1,2.2) node[above]{\tiny{$y=2x$}};
\draw[thick,red]   (0,0) -- (-1.1,2.2)node[above]{\tiny{$y=-2x$}} node[below]{$\varGamma_2$};
\draw[thick,blue]  (0,0) -- (2.2,-1.1)node[below]{\tiny{$y=-\frac{x}{2}$}} node[above]{$\varGamma_1$};
\draw[thick,blue]   (0,0) -- (-2.2,-1.1)node[below]{\tiny{$y=\frac{x}{2}$}};
 \node at (0,2) {\tiny{$u_0= x^2+y^2$}};
 \node at (0,-1.5) {\tiny{$u_0=- x^2-y^2$}};
 \node at (1.6,0.5) {\tiny{ $u_0= \frac{-3x^2+ 8 xy+3y^2}{5}$}} ;
  \node at (-1.6,0.5) { \tiny{$u_0= \frac{-3x^2-8 xy+3y^2}{5}$}} ;
\end{tikzpicture}
\end{center}
\begin{center}
 \footnotesize{\textbf{ Figure 3.3:} In this example we see that the cone $\{ u_0=p^1\}$ ( $\{ u_0 =p^2\}$)
does not have a fixed opening angle. Actually the opening angle can take any value in the closed interval $ [0,\pi]$}. 
\end{center}
\end{samepage}
We see that in all the examples discussed above there is one common property:
in the halfplane $ x\geq0$ the lines $ \varGamma_1$ and $ \varGamma_2$
intersect at a right angle, later on we will provide a rigorous argument for this.

\par
Let us study two more examples, where the free boundary shows a different behaviour. 
\begin{example}  \label{ex2}
Let   $ p^1(x,y) =-x^2-y^2$ and
 $ p^2(x,y)=2x^2+2 y^2$. Assume that $ u^0$ is a homogeneous of degree two solution to the
 double obstacle problem with obstacles $ p^1$ and $ p^2$ in $\mathbb{R}^2$, then  $ \varGamma_2= \{0\}$. In this 
 case if  a  blow-up is not a polynomial, then it is a halfspace solution
 corresponding to $p^1 $.
\end{example}
It is easy to verify that there is no second order harmonic polynomial in $\mathbb{R}^2$, satisfying
$p^1 \leq q \leq p^2$ and such that the polynomials $ p^2- q$
and $ q-p^1 $ both have roots of multiplicity two.
Furthermore, in this case there are no halfspace solutions corresponding to $p^2$.

\begin{samepage}
\begin{center}
 \begin{tikzpicture}[scale=1.1]
\draw[ultra thin,->] (-2.5,0) -- (2.5,0) node[anchor=west] {$x$} ;
\draw[ultra thin, ->] (0,-2.2) -- (0,2.2) node[anchor=south] {$y$} ;
\draw[thick,blue]  (-2,-2)node[below]{\tiny{$y=x$}} -- (2,2) node[below]{$\varGamma_1$};
 \node at (-1.3,1) {\tiny{{$u_0= -x^2-y^2$}}};
 \node at (1.3,-1) {\tiny{$u_0=- 2xy$}} ;
\end{tikzpicture}
 \begin{tikzpicture}[scale=1.1]
\draw[ultra thin,->] (-2.5,0) -- (2.5,0) node[anchor=west] {$x$} ;
\draw[ultra thin, ->] (0,-2.2) -- (0,2.2) node[anchor=south] {$y$} ;
\draw[thick,blue]  (-2,2)node[above]{\tiny{$y=-x$}} -- (2,-2) node[below]{$\varGamma_1$};
 \node at (-1.3,-1) {\tiny{{$u_0= -x^2-y^2$}}};
 \node at (1.3,1) {\tiny{$u_0= 2xy$}} ;
\end{tikzpicture}
\end{center}
\begin{center}
 \footnotesize{\textbf{ Figure 3.4:} Examples of halfspace solutions. }
\end{center}
\end{samepage}

\begin{samepage}
\begin{example}  \label{ex3}
The following functions are  homogeneous global solutions to the double obstacle problem with 
 $ p^1(x,y) =-x^2-y^2$ and
 $ p^2(x,y)=2 x^2$.
\end{example}

\begin{center}
 \begin{tikzpicture}[scale=1.1]
\draw[ultra thin,->] (-2.5,0) -- (2.5,0) node[anchor=west] {$x$} ;
\draw[ultra thin, ->] (0,-2.5) -- (0,2.5) node[anchor=south] {$y$} ;
\draw[thick,blue]  (0,0) -- (1.4,2.4)node[above]{$\varGamma_1$};
\draw[thick,blue]  (0,0) -- (-1.4,2.4)node[above]{\tiny{$ y=-\sqrt{3}x$} };
\draw[thick,red]   (0,0) -- (1.4,-2.4)node[above]{$\varGamma_2$};
\draw[thick,red]   (0,0) -- (-1.4,-2.4)node[below]{\tiny{$ y=\sqrt{3}x$}};
 \node at (0,1.6) {\tiny{{$u_0=- x^2-y^2$}}};
 \node at (1.5,0.3) {\tiny{$u_0=\frac{x^2 - 2\sqrt{3}xy-y^2}{2}$}};
 \node at (-1.5,-0.3) {\tiny{$u_0= \frac{x^2+ 2\sqrt{3}xy-y^2}{2}$}} ;
  \node at (0,-1.4) {\tiny{ $ u_0 =2x^2 $} } ;
\end{tikzpicture}
 \begin{tikzpicture}[scale=1.1]
\draw[ultra thin,->] (-2.5,0) -- (2.5,0) node[anchor=west] {$x$} ;
\draw[ultra thin, ->] (0,-2.5) -- (0,2.5) node[anchor=south] {$y$} ;
\draw[thick,blue]  (0,0) -- (1.4,2.4)node[above]{$\varGamma_1$};
\draw[thick,blue]  (0,0) -- (1.4,-2.4)node[below]{\tiny{$ y=-\sqrt{3}x$} };
\draw[thick,red]   (0,0) -- (-1.4,-2.4)node[below]{$\varGamma_2$};
\draw[thick,red]   (0,0) -- (-1.4,2.4)node[above]{\tiny{$ y=\sqrt{3}x$}};
 \node at (1.2,0.3) {\tiny{{$u_0=- x^2-y^2$}}};
 \node at (0,2.1) {\tiny{$u_0=\frac{x^2 -2 \sqrt{3}xy-y^2}{2}$}};
 \node at (0,-2.1) {\tiny{$u_0= \frac{x^2+2 \sqrt{3}xy-y^2}{2}$}} ;
  \node at (-1.2,0.3) {\tiny{ $ u_0 =2x^2 $} } ;
\end{tikzpicture}
\end{center}
\begin{center}
 \footnotesize{\textbf{ Figure 3.5:} The noncoincidence set is a cone with an opening angle 
 $ 2\pi/3$ or $ \pi/3$. }
\end{center}
\end{samepage}

\subsection{Double-cone solutions}

Let $ p^1 \leq p^2$ be given polynomials,
\begin{equation} \label{p1p2}
  p^i(x) \equiv a_i x_1^2 + 2b_i x_1 x_2 +c_i x_2 ^2, \textrm{ for } i=1,2,
\end{equation}

Consider the following normalised double obstacle problem in $\mathbb{R}^2$ with obstacles $p^1$, $p^2$;
\begin{equation} \label{equationsection3}
 p^1 \leq u \leq p^2, ~\Delta u = \lambda_1 \chi_{\{u= p^1\}}+\lambda_2 \chi_{\{u= p^2\}},
\end{equation}
where
\begin{equation}\label{lambda120}
 \lambda_1:=\Delta p^1 =  2(a_1+c_1) < 0 \textrm{ and } \lambda_2:=\Delta p^2=2(a_2+c_2) > 0.
\end{equation}

\par
We saw in Example \ref{ex1} and Example \ref{ex3} that for the double obstacle problem there exist global
solutions for which
the coincidence sets $ \{u=p^1\}$ and $ \{u=p^2\}$ are halfcones with a common vertex at the origin.

\begin{definition}\label{doublecone}
Let $ u$ be a global solution 
to the normalised double obstacle problem with  obstacles $ p^1 \leq p^ 2$.
We say that $u$ is a double-cone solution, if 
both $ \{ u=p^1 \}$ and $ \{ u=p^2\}$ are  halfcones with a common vertex. 
\end{definition}

\begin{remark}
Definition \ref{doublecone} is applicable also in higher dimensions. In the last section we will show the existence of three-dimensional double-cone solutions. 
\end{remark}

\par
In this section our aim is to describe the possible blow-ups for a solution to the double obstacle problem in
$ \mathbb{R}^2$.
In particular, we are interested to study the case when the double-cone solutions do exist.
It is easy to verify that if  $ \lambda_1=0$ or $\lambda_2=0$, there are no double-cone solutions,
explaining our assumption \eqref{lambda120}.

\par
A simple calculation shows that if $ p^1=p^2$ on a line, then there are no double-cone solutions.
Hence we assume that $ p^1$ and $ p^2$
meet only at the origin, in other words the matrix  $D^2 (p^2-p^1)$ is positive definite.

\par
Without loss of generality we may assume that $ b_1=b_2=b$ in \eqref{p1p2}. Otherwise, if $ b_2-b_1 \neq 0$,
we can rotate the coordinate system with an angle $ \theta$,
$\frac{\cos 2\theta}{\sin 2\theta }= \frac{a_2-a_1-c_2+c_1}{2(b_1-b_2)} $, and obtain $ b_1=b_2$ in the new system.
Furthermore, we may subtract a harmonic polynomial $ h(x)= 2bx_1 x_2$ from $p^1$,  $p^2$ and $ u$, then
consider instead the polynomials $p^1-h$ and $p^2-h$, thus obtaining $b=0$. Instead of $ u$, we are
studying the solution $ u-h$, but still call it $u$.

\par
We saw that it is enough to study the blow-up solutions of the double obstacle problem with obstacles 
having the form
\begin{equation} \label{p12}
 p^1(x) =a_1 x_1^2 +c_1 x_2^2 ~~\textrm{ and }~~
  p^2(x) = a_2 x_1^2 +c_2 x_2^2.
\end{equation}
According to our assumption,
the matrix $ A:=D^2( p^2- p^1 )$ is positive definite, hence
\begin{equation} \label{a2a1}
 a_2 >a_1,~~~ c_2 >c_1.
\end{equation}
and by \eqref{lambda120},
\begin{equation}\label{lambda12}
 a_1+c_1 < 0, \textrm{ and } a_2+c_2>0.
\end{equation}

\par
If $u$ is a double-cone solution in  $\mathbb{R}^2 $, then the
noncoincidence set $ \Omega_{12}=\{p^1< u <p^2 \}$ consists of two halfcones $ \mathcal{S}_1$ and $ \mathcal{S}_2$, 
having a common vertex. So the expression ''double-cone'' may refer to the cones $\mathcal{S}_i$ as well.
The following lemma is the  main step to the investigation of double-cone solutions
in $ \mathbb{R}^2$.

\begin{lemma} \label{lemmaqs}
 Let  $ p^1( x) = a_1 x_1^2+ c_1 x_2^2$ and $ p^2( x) = a_2 x_1^2 + c_2 x_2^2$ be given polynomials, 
satisfying \eqref{a2a1} and \eqref{lambda12}.
Assume that there exists a pair $ (q, \mathcal{S})$, where $\mathcal{S}$ is
an open sector in $\mathbb{R}^2$, with the edges lying on the lines $ x_2=mx_1$ and $x_2=kx_1$, and $ q$ is a 
harmonic homogeneous of degree two function in $\mathcal{S}$.  Moreover, assume that 
\begin{equation} \label{qS}
 p^1 \leq q \leq p^2  \textrm{ in } \mathcal{S}, 
 \end{equation}
 and  the following boundary conditions hold;
\begin{equation} \label{bdqp1}
  q- p^1 = 0 ,  \nabla (q- p^1 )=0 \textrm{ on } x_2=mx_1,
\end{equation}
and
\begin{equation} \label{bdqp2}
 q- p^2 = 0 ,  \nabla (q- p^2 )=0 \textrm{ on } x_2=kx_1.
\end{equation}
Then $ q =\alpha x_1^2+ 2  \beta x_1 x_2 - \alpha x_2^2$, where $ \alpha$ and $ \beta$ are real numbers
solving
\begin{equation}\label{beta2}
  \beta^2=-(\alpha-a_1) ( \alpha +c_1)= - (\alpha-a_2) ( \alpha +c_2) \geq 0,
\end{equation}
\begin{equation} \label{alpha}
 \max( a_1,-c_2) \leq \alpha \leq \min( a_2,-c_1) ,
\end{equation}
and
\begin{equation} \label{a1c1a2c2}
 \alpha( c_1-a_1 -c_2 + a_2)= a_1 c_1- a_2 c_2.
\end{equation}
The numbers $ m $ and $ k $ are given by 
\begin{equation} \label{km}
 m= \frac{\beta}{\alpha + c_1}~ \textrm{ and }~ k= \frac{\beta}{c_2+\alpha}.
\end{equation}
Furthermore, the coefficients of $ p^1$ and $ p^2$ satisfy the following inequality 
\begin{equation} \label{necessary}
  (a_1+c_2)(c_1+ a_2) \leq 0.
\end{equation}

\end{lemma}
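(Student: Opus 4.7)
The plan is to first pin down the form of $q$ from harmonicity and homogeneity, and then exploit the double vanishing of $q-p^i$ on the edges of $\mathcal{S}$ to extract every identity algebraically.

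Since $q$ is harmonic and homogeneous of degree two on the open sector $\mathcal{S}$, writing $q(r,\theta)=r^2 f(\theta)$ reduces $\Delta q=0$ to $f''+4f=0$, so $f(\theta)=A\cos 2\theta+B\sin 2\theta$. Converting back to Cartesian coordinates, $q$ coincides on $\mathcal{S}$ with the harmonic quadratic polynomial $\alpha(x_1^2-x_2^2)+2\beta x_1 x_2$ for some $\alpha,\beta\in\mathbb{R}$, which is the stated form. Now the difference $q-p^1=(\alpha-a_1)x_1^2+2\beta x_1x_2-(\alpha+c_1)x_2^2$ is a homogeneous quadratic form which, by \eqref{bdqp1}, vanishes together with its gradient on the line $x_2=mx_1$. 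A quadratic form with a double zero along a line must be proportional to the square of the defining linear factor, so $q-p^1=-(\alpha+c_1)(x_2-mx_1)^2$, the scalar being forced by the $x_2^2$ coefficient. Matching the $x_1x_2$ and $x_1^2$ coefficients then yields $m=\beta/(\alpha+c_1)$ and $\beta^2=-(\alpha-a_1)(\alpha+c_1)$. Repeating the argument with \eqref{bdqp2} gives $q-p^2=-(\alpha+c_2)(x_2-kx_1)^2$, $k=\beta/(\alpha+c_2)$ and $\beta^2=-(\alpha-a_2)(\alpha+c_2)$. Equating the two expressions for $\beta^2$ and expanding immediately produces the linear identity \eqref{a1c1a2c2}.

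For the range \eqref{alpha}, I would combine two separate sign constraints. The inequality $p^1\le q\le p^2$ on $\mathcal{S}$, together with the perfect-square representation, forces $-(\alpha+c_1)\ge 0$ and $-(\alpha+c_2)\le 0$, i.e.\ $-c_2\le\alpha\le -c_1$. Independently, $\beta^2\ge 0$ in each representation gives $\alpha\in[a_1,-c_1]$ and $\alpha\in[-c_2,a_2]$; intersecting these yields the full range $\max(a_1,-c_2)\le\alpha\le\min(a_2,-c_1)$.

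The only non-routine step is the necessary inequality \eqref{necessary}, and I expect this to be the main algebraic obstacle. The plan is to substitute the solution $\alpha=(a_2c_2-a_1c_1)/D$ of \eqref{a1c1a2c2}, with $D=(c_2-c_1)-(a_2-a_1)$, back into $\beta^2=-(\alpha-a_1)(\alpha+c_1)$. A direct calculation (the main chore) telescopes the two factors into
\[
\alpha-a_1=\frac{(a_2-a_1)(a_1+c_2)}{D},\qquad \alpha+c_1=\frac{(a_2+c_1)(c_2-c_1)}{D},
\]
so that
\[
\beta^2=-\,\frac{(a_2-a_1)(c_2-c_1)(a_1+c_2)(a_2+c_1)}{D^2}.
\]
Since $a_2-a_1>0$, $c_2-c_1>0$ and $D^2>0$, the requirement $\beta^2\ge 0$ reduces precisely to $(a_1+c_2)(c_1+a_2)\le 0$, as claimed. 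The degenerate case $D=0$ in which the linear equation \eqref{a1c1a2c2} has no unique solution corresponds to \textit{Case~1} of the introduction, where infinitely many $\alpha$ are admissible; it should be noted but handled along the same lines.
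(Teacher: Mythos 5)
Your proposal follows essentially the same route as the paper: identify $q$ as $\alpha(x_1^2-x_2^2)+2\beta x_1x_2$, use the double vanishing on each edge to write $q-p^1$ and $q-p^2$ as multiples of perfect squares (the paper does this by passing to $t=x_2/x_1$ and demanding a double root, which is the same computation), read off \eqref{beta2}, \eqref{alpha}, \eqref{km} and \eqref{a1c1a2c2} from the coefficients and sign constraints, and obtain \eqref{necessary} by substituting the explicit $\alpha$ into $\beta^2=-(\alpha-a_1)(\alpha+c_1)$. Your factorisations of $\alpha-a_1$ and $\alpha+c_1$ agree with the paper's, and your derivation of the range \eqref{alpha} from the two representations of $\beta^2$ is correct.

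The one genuine gap is the degenerate case $D=c_2-c_1-a_2+a_1=0$ in the proof of \eqref{necessary}. You defer it with the remark that it ``corresponds to Case~1'' and is ``handled along the same lines,'' but the same lines literally fail there: you cannot divide by $D$, and the assertion that $D=0$ puts you in Case~1 (i.e.\ $a_1+c_2=a_2+c_1=0$) is precisely what must be proved, since when $D=0$ one has $(a_1+c_2)(a_2+c_1)=(a_1+c_2)^2\ge 0$, so \eqref{necessary} can only hold with equality. The paper closes this with a separate short argument: when $D=0$, equation \eqref{a1c1a2c2} forces $a_1c_1=a_2c_2$; writing $a_2=la_1$ (with $l\neq 1$ by \eqref{a2a1}) gives $c_1=lc_2$, and then $D=(1-l)(a_1+c_2)=0$ yields $a_1+c_2=0$, hence also $a_2+c_1=0$ (with the case $a_1=0$ treated directly). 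You should supply this, or an equivalent computation, to make the proof complete.
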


\begin{proof}
 Let us note that harmonic homogeneous of 
degree two functions in a sector are second degree polynomials of the form
$q(x)= \alpha x_1^2+ 2  \beta x_1 x_2 - \alpha x_2^2$, 
where $ \alpha$ and $ \beta$ are real numbers. By assumption \eqref{qS}, 
\begin{equation*}
\begin{aligned}
q-p^1=(\alpha-a_1) x_1^2+ 2 \beta x_1x_2 -(\alpha+c_1)x_2^2 \geq 0, \textrm{ and } \\
p^2-q= (a_2-\alpha)x_1^2-2\beta x_1x_2+(c_2+\alpha)x_2^2 \geq 0 ~\textrm{ in } ~\mathcal{S} .
\end{aligned}
\end{equation*}
Denote by $t = \frac{x_2}{x_1}$, and observe that \eqref{bdqp1}
implies that
the following quadratic polynomial 
\begin{align*}
\frac{q- p^1}{x_1^2}= -(\alpha+c_1)t^2 + 2\beta t+\alpha-a_1
\end{align*}
has a multiple root at the point $ t=m$. 
By an elementary calculation we obtain  
\begin{equation*}
 \beta^2=-(\alpha-a_1)(\alpha+c_1), \textrm{ and }
\frac{q- p^1}{x_1^2}=- (\alpha + c_1) (t-m)^2.
\end{equation*}
Hence the inequality $q- p^1 \geq 0$ in $ \mathcal{S}$ implies $ q-p^1 \geq 0 $ in 
$ \mathbb{R}^2$. Therefore we may conclude that
\begin{equation} \label{p1q}
 -\alpha-c_1 \geq 0,~\alpha-a_1 \geq 0,~ \beta^2= -(\alpha-a_1) (\alpha + c_1), \textrm{ and }
 m= \frac{\beta}{\alpha+c_1}.
\end{equation}

\par
Similarly,  \eqref{bdqp2} implies that
the following quadratic polynomial 
\begin{align*}
\frac{p^2-q}{x_1^2}= (c_2+\alpha)t^2 + -2\beta t+a_2-\alpha
\end{align*}
has a multiple root at the point $ t=k$. 
Hence $ \beta^2= (a_2-\alpha)(c_2+\alpha)$, and the inequality $p^2-q \geq 0$ in $ \mathcal{S}$ implies $ p^2-q \geq 0 $ in 
$ \mathbb{R}^2$. Therefore, by a similar argument as the one leading to \eqref{p1q}, we get
\begin{equation} \label{p2q}
c_2+\alpha\geq 0,~a_2-\alpha \geq 0, ~\beta^2= (a_2-\alpha)(c_2+\alpha),  \textrm{ and } 
k = \frac{\beta}{c_2+\alpha}.
\end{equation}
Let us also observe that if $ \alpha=-c_1$, then $p^1(x)=q(x)$ implies $ x_1=0$, 
similarly, if $ \alpha=-c_2$, then $p^2(x)=q(x)$ implies $ x_1=0$. Hence \eqref{km} makes sense even if 
$ \alpha =-c_1$ or $ \alpha=-c_2$.

\par
 Assuming that there exists $ (q, \mathcal{S})$ satisfying \eqref{qS},\eqref{bdqp1},\eqref{bdqp2}, 
 we derived \eqref{p1q} and \eqref{p2q}, which in particular imply  \eqref{beta2}, \eqref{alpha} and \eqref{km}.
 It follows from  \eqref{beta2}, that 
 \begin{equation*}
  \alpha^2 -a_1 \alpha +c_1 \alpha -a_1c_1= \alpha^2 -a_2 \alpha +c_2 \alpha -a_2 c_2,
 \end{equation*}
hence $ \alpha $ solves equation \eqref{a1c1a2c2}.
 As we see equation \eqref{a1c1a2c2} is contained in \eqref{beta2}, we stated \eqref{a1c1a2c2} only for the future
 references.

 \par
It remains to prove the  inequality 
\eqref{necessary}, which is a necessary condition for the existence of  $ \alpha, \beta $, thus 
for $ (q,\mathcal{S})$.
We discuss two cases.
$i)$ If 
\begin{equation} \label{zero}
c_1-a_1 -c_2 + a_2  =0,
\end{equation}
it follows from equation \eqref{a1c1a2c2} that 
\begin{equation}\label{one}
a_2c_2=a_1 c_1.
\end{equation} 
 If $ a_1=0$, then $ a_2 <0$ by \eqref{a2a1}, therefore $c_2=0$, and \eqref{necessary} holds.
Otherwise, if $ a_1 \neq 0$, let 
$ a_2= l a_1 $, then $ l\neq 1$ by \eqref{a2a1}. Hence $ c_1 = l c_2$ according to \eqref{one}. Now 
\eqref{zero} implies that $(l-1)(a_1+c_2)=0$, since $ l\neq 1$, we obtain $ a_2+ c_1=0$, and \eqref{necessary}
holds.

\par
$ii)$ If $ c_1-a_1 -c_2 + a_2 \neq 0 $, then 
equation \eqref{a1c1a2c2} implies that
 \begin{equation} \label{Alpha}
  \alpha = \frac{a_2 c_2-a_1 c_1}{ c_2+a_1-a_2-c_1}.
 \end{equation}
 By a direct computation we see that 
 \begin{align*}
  \alpha-a_1=\frac{(a_2-a_1)(a_1+c_2)}{c_2+a_1-a_2-c_1}, ~ \textrm{ and }
  \alpha+c_1=\frac{(c_2-c_1)(a_2+c_1)}{c_2+a_1-a_2-c_1}, 
 \end{align*}
by \eqref{beta2} 
\begin{equation*}
 \beta^2= -\frac{(a_2-a_1)(c_2-c_1)(a_1+c_2)(a_2+c_1)}{(c_2+a_1-a_2-c_1)^2} \geq 0.
\end{equation*}
Taking into account  \eqref{a2a1}, we obtain the desired inequality, \eqref{necessary}.
 
\end{proof}

\par
Let us observe that if $ u_0$ is a double-cone solution (Definition \ref{doublecone}), then there exist
$ (q_1, \mathcal{S}_1)$ and  
$(q_2, \mathcal{S}_2)$ 
as in Lemma \ref{lemmaqs}, such that $ \mathcal{S}_1\cap \mathcal{S}_2=\emptyset $, and
\begin{equation} \label{s1s2}
 u_0= q_1~ \textrm{ in }~ \mathcal{S}_1 \textrm{ and } ~ u_0= q_2 \textrm{ in }~ \mathcal{S}_2.
\end{equation}

\par
According to Lemma \ref{lemmaqs}, the inequality \eqref{necessary} is a necessary condition for the existence 
of double-cone solutions, in the next theorem we will discuss if \eqref{necessary} is also a sufficient condition.

\begin{theorem} \label{theorem1}{\textbf{(The existence of double-cone solutions)}} \\
 Let $ u_0$ be a homogeneous of degree two global solution 
to the double obstacle problem with obstacles 
\begin{equation*}
  p^1( x) = a_1 x_1^2+ c_1 x_2^2~\textrm{ and } ~ p^2( x) = a_2 x_1^2 + c_2 x_2^2,
\end{equation*}
satisfying \eqref{lambda12} and \eqref{a2a1}.
If $ u_0$ is neither a polynomial nor a 
halfspace solution, then it is a double-cone solution.
\begin{enumerate}
\item[Case 1)] If $ a_2+c_1= c_2 +a_1=0$, then
there are infinitely many double-cone solutions.  Each of the  
 cones $\mathcal{S}_1$ and $\mathcal{S}_2$ in \eqref{s1s2} has an opening angle $ \vartheta= \pi /2 $.

\item[Case 2)] If $ (a_1+c_2)(c_1+ a_2) < 0$, then there 
exist  four double-cone solutions.  Furthermore, the
opening angle of $\mathcal{S}_i$, denoted by  $\vartheta_i$, satisfies
\begin{equation}
\vartheta_1=\vartheta_2=\vartheta, \textrm{ and }  \cos^2 \vartheta = {\frac{(a_1+c_2)(a_2+c_1)}{(a_1+c_1)(a_2+c_2)}} \in (0,1).
\end{equation}

\item[Case 3)] If $ (a_1+c_2)(c_1+ a_2) \geq 0$, and $a_1+c_2 \neq 0$ or $a_2+c_1\neq 0$, then there are no double-cone solutions. 

\end{enumerate}
\end{theorem}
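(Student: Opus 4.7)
The strategy is to first prove the sector structure of any such $u_0$ -- namely that around $\partial B_1$ we see a single sector $\{u_0=p^1\}$, a single sector $\{u_0=p^2\}$, and two noncoincidence sectors between them, making $u_0$ automatically a double-cone solution -- and then to apply Lemma~\ref{lemmaqs} in each noncoincidence sector to carry out the case analysis.

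\emph{Sector decomposition.} Since $u_0$ is homogeneous of degree two, each of $\{u_0=p^1\}$, $\{u_0=p^2\}$ and $\Omega_{12}$ is dilation-invariant, hence a union of sectors with vertex at the origin, and in each noncoincidence sector $u_0$ coincides with a harmonic polynomial $q(x)=\alpha(x_1^2-x_2^2)+2\beta x_1 x_2$. Three observations force the structure: if $\{u_0=p^2\}=\emptyset$ (or $\{u_0=p^1\}=\emptyset$), $u_0$ reduces to a one obstacle problem whose global blow-ups are polynomials or halfspace solutions, both excluded by hypothesis; a noncoincidence sector cannot be flanked by two $\{u_0=p^i\}$-sectors of the same index, since $C^{1,1}$-matching on two distinct rays through the origin would force the quadratic $q-p^i$ to vanish identically; and adjacent $p^1$- and $p^2$-sectors are impossible because $p^1<p^2$ off the origin. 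Combining these with a count of how the sectors can alternate around the circle, I expect to conclude $n_1=n_2=1$ and $n_{12}=2$, so $u_0$ is a double-cone solution.

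\emph{Case analysis.} On each noncoincidence sector, Lemma~\ref{lemmaqs} furnishes $(\alpha,\beta)$ satisfying \eqref{beta2}--\eqref{a1c1a2c2}. In \textit{Case 1} ($a_1+c_2=a_2+c_1=0$), equation \eqref{a1c1a2c2} is the trivial identity $0\cdot\alpha=0$, \eqref{alpha} becomes $\alpha\in[a_1,a_2]$, and \eqref{beta2} reduces to $\beta^2=-(\alpha-a_1)(\alpha-a_2)\geq 0$, which has a one-parameter family of solutions; substitution into \eqref{km} gives $mk=-1$, so each sector has opening angle $\pi/2$. In \textit{Case 2}, one first checks that $c_2+a_1-a_2-c_1\neq 0$ (otherwise the argument around \eqref{zero}--\eqref{one} would force $(a_1+c_2)(a_2+c_1)=0$), hence \eqref{Alpha} uniquely determines $\alpha$ and $\beta^2>0$ yields $\beta=\pm\beta_0$; the two signs combined with the choice of which of the two noncoincidence sectors carries which $(q,\mathcal{S})$ produce exactly four double-cone solutions. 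The opening angle follows from the identities $1+m^2=(a_1+c_1)/(\alpha+c_1)$, $1+k^2=(a_2+c_2)/(\alpha+c_2)$ and $(1+mk)^2=(a_1+c_2)(a_2+c_1)/[(\alpha+c_1)(\alpha+c_2)]$, which together give the stated $\cos^2\vartheta$. In \textit{Case 3}, if $(a_1+c_2)(a_2+c_1)>0$ the necessary condition \eqref{necessary} of Lemma~\ref{lemmaqs} fails outright; if exactly one of $a_1+c_2$, $a_2+c_1$ vanishes, the system forces $\beta=0$ and $q-p^i$ touching along coordinate axes only, which yields a polynomial solution rather than a double-cone one.

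\textbf{Main obstacle.} The delicate step is the sector decomposition: ruling out configurations with $n_1=n_2\geq 2$ requires a matching argument showing that the $(\alpha,\beta)$ parameters of adjacent noncoincidence sectors, constrained to lie in the one-dimensional family (or single point) allowed by Lemma~\ref{lemmaqs}, are incompatible with a cyclic partition of the plane unless $n_1=n_2=1$. Once the sector structure is fixed, the algebra of the three cases is essentially a substitution into the formulas of Lemma~\ref{lemmaqs}.
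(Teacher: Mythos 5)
Your proposal follows essentially the same route as the paper: reduce to Lemma \ref{lemmaqs} on each noncoincidence sector and then classify the admissible $(\alpha,\beta)$ via \eqref{beta2}, \eqref{alpha} and \eqref{a1c1a2c2}; your sector-decomposition preamble is in fact more explicit than the paper's, which simply asserts the existence of the pair $(q,\mathcal{S})$ and disposes of configurations with more than two noncoincidence sectors only by the opening-angle count at the end of Case 1. The algebra in Cases 1 and 2 checks out (in particular your identities $1+m^2=(a_1+c_1)/(\alpha+c_1)$, $1+k^2=(a_2+c_2)/(\alpha+c_2)$ give a cleaner derivation of $\cos^2\vartheta$ than the paper's direct computation). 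One small correction in Case 3: when exactly one of $a_1+c_2$, $a_2+c_1$ vanishes, $\beta=0$ forces only the corresponding coincidence cone to degenerate to a ray, so the resulting $u_0$ is a halfspace solution rather than a polynomial --- but the conclusion that no double-cone solution exists is unchanged.
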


\begin{proof}

\par
If $ u_0$ is neither a polynomial nor a 
halfspace solution, then there exists a pair $ (q, \mathcal{S})$, such that 
\begin{equation}
 u_0= q \textrm{ in } \mathcal{S},~u_0= p^1 \textrm{ on } \{ x_2=mx_1\}, ~u_0=p^2 \textrm{ on } \{ x_2=k x_1\}
\end{equation}
where $S$ is
a sector in $\mathbb{R}^2$, with edges lying on the lines $\{ x_2=mx_1\} $ and $ \{x_2=kx_1\}$, and  $ q$ is a 
harmonic homogeneous degree two function in $\mathcal{S}$, satisfying \eqref{qS}.
Moreover, since $ u_0 \in C^{1,1}$, we obtain $ \nabla q=\nabla p^1$ on $\{ x_2=mx_1\} $ and 
$\nabla  q= \nabla p^2$ on
$ \{x_2=kx_1\}$. Hence $q$ takes
boundary conditions \eqref{bdqp1} and \eqref{bdqp2}  on
$ \partial \mathcal{S} \subset \{ x_2=mx_1\} \cup \{x_2=kx_1\}$, and 
therefore $ (q, \mathcal{S}) $ satisfies the assumptions in Lemma \ref{lemmaqs}.

\par
According to Lemma \ref{lemmaqs}, $ q =\alpha x_1^2+ 2  \beta x_1x_2 - \alpha x_2^2$, where $ \alpha$ and $ 
\beta$ are real numbers
solving \eqref{beta2} and \eqref{alpha}. The  numbers $ m $ and $ k $, describing the sector $\mathcal{S}$, are 
given by 
\eqref{km}.

\par
 We are looking for all possible pairs $ (q, \mathcal{S})$ in terms of the parameter $ \alpha$. 
Given $\alpha $, satisfying \eqref{alpha} and \eqref{a1c1a2c2},  we can find $\pm \beta$ from 
 equation \eqref{beta2}.
By equation \eqref{km} we can identify the corresponding sectors $\mathcal{S}$.

\par
Let us split the discussion into several cases  in order to study 
the existence of solutions to the equation \eqref{a1c1a2c2} in variable $\alpha$, 
satisfying inequality \eqref{alpha}.

\textit{ \textbf{ Case 1)}} If $ a_2+c_1=c_2 +a_1=0$, 
as in  Example \ref{ex1}. Then obviously equation \eqref{a1c1a2c2} becomes an identity. Hence in this case 
$ \alpha $ can be any number satisfying \eqref{alpha}, that is $  a_1 \leq \alpha \leq a_2$.
If $ \alpha=a_1$, then $ \beta =0$ in view of \eqref{beta2}, and according to \eqref{km},
$ \varGamma_1= \{ x_2=0\}$. In this case $\varGamma_2=\{x_1=0,x_2 \geq 0 \}$ or 
$\varGamma_2=\{x_1=0,x_2 \leq 0 \}$ . 
Analogously if  $ \alpha= a_2$ then $ \varGamma_2= \{ x_2=0\}$ and 
$\varGamma_1=\{x_1=0,x_2 \geq 0 \}$ or $\varGamma_2=\{x_1=0,x_2 \leq 0 \}$.
Hence we obtain halfspace solutions, see
Figure 3.1 with $ \alpha =-1=a_1$ and $ \alpha=1= a_2$. These can still be viewed as double-cone solutions, if we allow  the cone $ \mathcal{S}_2$ to be a halfline.

\par
 Now let us fix any $ a_1 < \alpha < a_2$. 
It follows from \eqref{p1q} and \eqref{p2q} that 
\begin{equation*}
\beta_{\pm} = \pm \sqrt{(\alpha- a_1)(a_2 -\alpha )} ,
\end{equation*}
and 
\begin{equation} \label{mk}
 m_{\pm}= \mp \sqrt{ \frac{\alpha - a_1}{a_2-\alpha}}, ~~ k_{\pm}= \pm \sqrt{ \frac{a_2- \alpha}{\alpha -a_1}}.
\end{equation}
Let us note that $ m^\pm k^\pm =-1$, and therefore the lines $ x_2=m^\pm x_1$ and $ x_2= k^\pm x_1$ are perpendicular.
Thus for a fixed $ a_1<\alpha < a_2$ we obtain two polynomials 
\begin{equation} \label{q12}
 q_+ := \alpha x_1^2+ 2 \beta_+ x_1x_2 - \alpha x_2^2 ~\textrm{ and } ~
 q_- := \alpha x_1^2+ 2 \beta_- x_1x_2 - \alpha x_2^2.
\end{equation} 
Where $ q_+ = p^1$ if $ x_2=m_+ x_1 $, $ q_+=p^2 $ if $ x_2=k_+x_1$, and 
$ q_- = p^1$ if $ x_2=m_-x_1 $, $ q_-=p^2 $ if $ x_2=k_-x_1$. Hence for a fixed $\alpha $ there are two 
pairs $ (q_+, \mathcal{S}_1)$ and
$(q_-, \mathcal{S}_2) $ forming a single double-cone solution $ u_0$.
There are four different choices of disjoint sectors  $ \mathcal{S}_1$ and $ \mathcal{S}_2$, satisfying \eqref{mk}.
Therefore we obtain four different double-cone solutions for a fixed $ a_1<\alpha <a_2$.
Figure 3.3 illustrates two of them for $ \alpha = -\frac{3}{5}$. 

\par
In fact we obtain more double-cone solutions by "merging" two double-cone solutions corresponding to two different 
values of $ \alpha$. Consider the following example
 $ u_0 = x_1^2 \sgn (x_1) + x_2^2 \sgn (x_2)$ (see the left picture in Figure 3.2). Then $ q_1= -x_1^2+ x_2^2$
with $ \alpha_1=-1 $ and 
$ q^2= x_1^2-x_2^2$  with $ \alpha_2= 1$. 

\par
Fix any 
$ a_1 \leq \alpha_1 \neq \alpha_2 \leq a_2$, then there are 
four double-cone solutions corresponding to each of $ \alpha_i$.
From these double-cone solutions we obtain eight more double-cone solutions, such that 
$ \mathcal{S}_1\cap \mathcal{S}_2=\emptyset$, where
 $ q^i $, $ i=1,2$ 
can be either $q_+$ or  $ q_-$ corresponding to $ \alpha_i$. 
The solution $ u_0$ can be described graphycally as follows:
\begin{center}
 \begin{tikzpicture}[scale=1.1]
\draw[ultra thin,->] (-2.5,0) -- (2.5,0) node[anchor=west] {$x_1$} ;
\draw[ultra thin, ->] (0,-2.5) -- (0,2.5) node[anchor=south] {$x_2$} ;
\draw[thick,blue]  (0,0) -- (2.5,0.5)node[above]{$\varGamma_1$}node[below]{\tiny{$ x_2=m_1 x_1$}};
\draw[thick,red]  (0,0) -- (-0.5,2.5)node[left]{\tiny{$x_2=k_1 x_1$}} ;
\draw[thick,blue]   (0,0) -- (2.2,-2.2)node[below]{\tiny{$ x_2=m_2 x_1$}};
\draw[thick,red]   (0,0) -- (-2.2,-2.2)node[above]{$\varGamma_2$}node[below]{\tiny{$ x_2=k_2 x_1$}};
 \node at (0.7,1.6) {\small{{$u_0=q^1$}}};
 \node at (1.5,-0.3) {\small{$u_0=p^1$}};
 \node at (-1.5,0.3) {\small{$u_0= p^2 $} };
  \node at (0,-1.5) {\small{ $ u_0 =q^2 $} } ;
  \node at (0.5,0.8) {{$ \mathcal{S}_1$}};
   \node at (0.2,-1) {{$ \mathcal{S}_2$}};
      \draw  (-0.035,0.175) -- (0.135,0.21) -- (0.175,0.035);
         \draw  (-0.13,-0.13) -- (0,-0.242) -- (0.13,-0.13);
\end{tikzpicture}
\end{center}
This is a general example of a double-cone solution \eqref{s1s2}, where the polynomial $ q^1 $, and 
the numbers $m_1,k_1$ correspond to $ \alpha_1$, similarly
 $ q^2 $ and $m_2,k_2$ correspond to $ \alpha_2$. 
We conclude that, if $ u_0 $ is a double-cone solution then the cone $ \{ u_0 = p^1\}$
may have any opening angle $ \theta$, $0 <  \theta <\pi$, and
the cone $ \{ u_0 = p^2\}$
has an angle $ \pi-\theta$.
If $ \theta =0$ or $ \theta=\pi$, then $ u_0$
is a halfspace solution corresponding to $p^2$ or $ p^1 $ respectively.

\par
Finally, note that there are no homogeneous of degree two solutions $ u_0$ corresponding to three or more 
different values of $ \alpha$,
since $ u_0$  can have only an even number of $(q, \mathcal{S})$, and 
$ \mathcal{S}$ always has an opening angle $\pi/2$.

\textit{\textbf{ Case 2)}} 
If $ (a_1+c_2)(a_2+c_1) < 0$, then $ c_1-a_1 -c_2 + a_2 \neq 0 $, and the equation
\eqref{a1c1a2c2} has a unique solution, 
\begin{equation} \label{singlealpha}
 \alpha= \frac{a_2c_2-a_1c_1}{c_2+a_1-a_2-c_1}.
\end{equation}
From the inequality $ (a_1+c_2)(a_2+c_1) < 0$ it easily follows that 
\begin{equation}
\max( a_1,-c_2) < \alpha < \min( a_2,-c_1) .
\end{equation}
Referring to \eqref{beta2}, we can calculate  
\begin{equation*}
 \beta_{\pm}= \pm \sqrt{ (\alpha+ c_1)(a_1-\alpha)}=
 \pm \frac{\sqrt{ -(a_2-a_1)(c_2-c_1)(a_2+c_1)(a_1+c_2)}}{c_2+a_1-a_2-c_1}.
\end{equation*}
According to \eqref{km},
\begin{equation} \label{lines}
 m_{\pm}=\frac{\beta_{1,2}}{\alpha +c_1}=
 \mp\sqrt{-\frac{(c_2+a_1)(a_2-a_1)}{(a_2+c_1)(c_2-c_1)}}
 , ~~~k_{\pm}=\frac{\beta_{1,2}}{\alpha+c_2}= \pm \sqrt{-\frac{(c_1+a_2)(a_2-a_1)}{(a_1+c_2)(c_2-c_1)}}.
\end{equation}
Hence we obtain two harmonic polynomials $q_+$ and $ q_-$  and 
four combinations of disjoint $ \mathcal{S}_1$ and $ \mathcal{S}_2$. Since in this case 
$\alpha$ is a fixed number, given by \eqref{singlealpha},
there are only four double-cone solutions.

\par
Denote by $ \vartheta_i$ the opening angle of 
the cone $\mathcal{S}_i$, then it follows from \eqref{lines} that
\begin{align*}
 \cos \vartheta_i =\pm \frac{1+k_+ m_+}{\sqrt{1+(k_+)^2}\sqrt{1+(m_+)^2}} =
 \pm \frac{1+k_- m_-}{\sqrt{1+(k_-)^2}\sqrt{1+(m_-)^2}} = \\
=\pm \frac{ \frac{c_2-c_1 -a_2+ a_1}{c_2-c_1}}{
 \sqrt{\frac{(a_1+c_1)(c_2-a_2+a_1-c_1)}{(a_2+c_1)(c_2-c_1)}}
 \sqrt{\frac{(a_2+c_2)(c_2-a_2+a_1-c_1)}{(a_1+c_2)(c_2-c_1)}}}
 =\pm \sqrt{\frac{(a_1+c_2)(a_2+c_1)}{(a_1+c_1)(a_2+c_2)}}, \textrm{ for } i=1,2,
\end{align*}
hence $ \vartheta_1=\vartheta_2=\vartheta$, and 
\begin{equation*}
0<\cos^2\vartheta <1.
\end{equation*}
In Example \ref{ex3}, $a_1=c_1=-1$, $a_2=2$, $ c_2=0$, 
by a direct calculation we see that $ \alpha= \frac{1}{2}$, $\beta =\pm \frac{\sqrt{3}}{2}$, and
$ \vartheta_1=\vartheta_2 = \frac{\pi}{3}$ or $ \vartheta_1=\vartheta_2 = \frac{2 \pi}{3}$.

\par
\textit{ \textbf{ Case 3)}} If $ (a_1+c_2)(c_1+a_2)>0$, then the statement follows from the inequality \eqref{necessary} in  Lemma \ref{lemmaqs}.  
Otherwise if $ (a_1+c_2)(c_1+ a_2) = 0$, and $ a_1+c_2 \neq a_2+c_1 $, there are only halfspace solutions.
Indeed, assume that $ a_1+c_2=0$, and $a_2+c_1\neq 0$, then $\alpha =a_1=-c_2$ by \eqref{singlealpha}, and $ \beta =0$. 
Hence we obtain that $ u_0$ is a halfspace solution corresponding to $ p^1$.

\end{proof}

We say that a given polynomial $p$ has a sign, if $ p\not \equiv 0$, and $ p \geq 0$ (or $ p\leq 0$).
Let us rephrase Theorem \ref{theorem1} in a more compact form.

\begin{corollary} \label{core}
 Let $ p^i = a_i x_1^2+ c_i x_2^2 $ be given polynomials, satisfying 
 \eqref{lambda12} and \eqref{a2a1}.  There exist double-cone solutions 
 for the double obstacle problem with $ p^1, p^2$, if and only if the following polynomial
 \begin{equation} \label{p1+p2}
 P(x)=P(x_1,x_2) \equiv p^1(x_1, x_2) +p^2(x_2, x_1) = (a_1+c_2)x_1^2+(c_1+a_2)x_2^2 
 \end{equation}
has no sign. If $ P\equiv 0$, there are infinitely many double-cone
solutions. If $ P$ changes the sign, then there are four double-cone solutions, and if $P$ has a sign, there are none.
\end{corollary}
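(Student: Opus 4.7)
The plan is to observe that Corollary \ref{core} is essentially a cosmetic reformulation of Theorem \ref{theorem1}: the three numerical dichotomies appearing in Theorem \ref{theorem1} precisely correspond to the three possible sign behaviors of the diagonal quadratic form $P$. So the proof will consist of translating the algebraic conditions on the coefficients $A := a_1+c_2$ and $C := a_2+c_1$ into statements about $P$, and then invoking the three cases of Theorem \ref{theorem1} verbatim.

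First, I would record the elementary fact that since $P(x)=Ax_1^2+Cx_2^2$ is a diagonal quadratic form in two variables, its sign behavior is governed entirely by $(A,C)$: namely, $P\equiv 0$ iff $A=C=0$; $P$ has a sign (that is, $P\not\equiv 0$ and $P\geq 0$ or $P\leq 0$ on $\mathbb{R}^2$) iff $AC\geq 0$ and $(A,C)\neq(0,0)$; and $P$ changes sign iff $AC<0$. These three possibilities are mutually exclusive and exhaust all $(A,C)\in\mathbb{R}^2$.

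Next I would match them one-to-one with the cases of Theorem \ref{theorem1}. The hypothesis $a_2+c_1=c_2+a_1=0$ in Case 1 is exactly $A=C=0$, i.e.\ $P\equiv 0$, and Theorem \ref{theorem1} gives infinitely many double-cone solutions. The hypothesis $(a_1+c_2)(c_1+a_2)<0$ in Case 2 is exactly $AC<0$, i.e.\ $P$ changes sign, and Theorem \ref{theorem1} produces exactly four double-cone solutions. Finally, the hypothesis in Case 3, namely $(a_1+c_2)(c_1+a_2)\geq 0$ together with $(A,C)\neq(0,0)$, is exactly the condition that $P$ has a sign, and Theorem \ref{theorem1} asserts no double-cone solutions exist in that case.

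There is no real obstacle here; the only thing to double-check is that the three cases of Theorem \ref{theorem1} form a complete trichotomy of the parameter region cut out by \eqref{a2a1} and \eqref{lambda12}, which they do because Cases 1, 2, 3 together cover every possible value of the product $(a_1+c_2)(c_1+a_2)$ together with the degenerate subcase where both factors vanish. Once this enumeration is made explicit, the corollary follows immediately, and I would conclude by citing Theorem \ref{theorem1} in each of the three branches.
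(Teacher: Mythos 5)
Your proposal is correct and matches the paper exactly: the paper states Corollary \ref{core} as a direct rephrasing of Theorem \ref{theorem1} with no separate argument, and your explicit verification that the trichotomy on $(a_1+c_2)(a_2+c_1)$ corresponds bijectively to the three sign behaviors of $P$ is precisely the (implicit) content of that rephrasing.
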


\par 
In other words, there exist double-cone solutions if and only  if the matrix $ D^2 P$ is neither positive nor 
negative definite.

\subsection{Halfspace solutions}

\par 
Now we proceed to the discussion on the existence of halfspace solutions in $ \mathbb{R}^2$, see \mbox{Definition \ref{halfspace}}. Let 
$ u \in C^{1,1}$  be such that $w= u- p^1$ is a halfplane solution to the obstacle problem  {$ \Delta w= -\lambda_1 \chi_{\{w>0\}}$},  we need to 
 check if $ u \leq p^2$ in $\mathbb{R}^2$.

\begin{lemma} \label{lemmahalfspace}
 Let $ p^i = a_i x_1^2+ c_i x_2^2 $   be given polynomials, satisfying 
 \eqref{lambda12} and \eqref{a2a1}.
Let $ u^1 \in C^{1,1}$  be a halfplane solution to the obstacle problem with obstacle $ p^1$,
\begin{equation}
u^1(x)=  \begin{cases}
  q(x)= \alpha x_1^2 +2\beta x_1x_2 -\alpha x_2^2 >p^1(x),  & \mbox{if } x_2 >k x_1 \\
  p^1(x),  & \mbox{if } x_2 \leq k x_1,
 \end{cases}
\end{equation}
where $\alpha, \beta$ and $k$ are real numbers. 
The function $u^1$ is a halfspace solution corresponding to $p^1$ (for the double obstacle problem with $p^1,p^2$), if and only if 
\begin{equation} \label{delta1}
\min(-c_1,a_2)\geq \alpha \geq \max(a_1,-c_2) , ~\delta_1(\alpha):= a_1c_1-a_2c_2+\alpha(a_1-c_1-a_2+c_2)\leq 0,
\end{equation}
and 
\begin{equation} \label{betak}
\beta^2=-(\alpha-a_1)(\alpha +c_1), ~k= \frac{\beta}{\alpha+c_1}.
\end{equation}
\par
Similarly, let
$ -u^2 \in C^{1,1}$  be a halfplane solution to the obstacle problem with obstacle $ -p^2$, 
\begin{equation}
u^2(x)=  \begin{cases}
  q(x)= \alpha x_1^2 +2\beta x_1x_2 -\alpha x_2^2 <p^2(x),  & \mbox{if } x_2 >m x_1 \\
  p^2(x),  & \mbox{if } x_2 \leq m x_1,
 \end{cases}
\end{equation}
Then  $u^2$ is a halfplane solution corresponding to $p^2$, if and only if 
\begin{equation} \label{delta2}
\min(-c_1,a_2)\geq \alpha \geq \max(a_1,-c_2) , ~\delta_2(\alpha):= -a_1c_1+a_2c_2-\alpha(a_1-c_1-a_2+c_2)\leq 0,
\end{equation}
and 
\begin{equation}
\beta^2=-(\alpha-a_2)(\alpha +c_2), ~m= \frac{\beta}{\alpha+c_2}.
\end{equation}
\end{lemma}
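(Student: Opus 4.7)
The plan is to treat the two symmetric assertions separately and to derive the second from the first via the involution $u^2\mapsto -u^2$, $p^i\mapsto -p^{3-i}$, which swaps the roles of the two obstacles and of $\lambda_1,\lambda_2$. So the main work is in the first statement. From the given form of $u^1$, the $C^{1,1}$ gluing of $u^1=q$ across the line $\{x_2=kx_1\}$ onto $u^1=p^1$ forces $q-p^1$ to have a double root along $\{x_2=kx_1\}$, viewed as a quadratic in $t=x_2/x_1$, exactly as in Lemma \ref{lemmaqs}. This immediately gives
\begin{equation*}
\beta^2=-(\alpha-a_1)(\alpha+c_1),\qquad k=\frac{\beta}{\alpha+c_1},
\end{equation*}
so $q-p^1=-(\alpha+c_1)(x_2-kx_1)^2$ on all of $\mathbb{R}^2$; non-negativity then forces $\alpha\geq a_1$ and $\alpha\leq -c_1$.

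What remains to characterise is the upper-obstacle inequality $u^1\leq p^2$. On $\{x_2\leq kx_1\}$ this is automatic, since $u^1=p^1\leq p^2$ there; so the content is $q\leq p^2$ on the open halfspace $\{x_2>kx_1\}$. The main step I expect to be the critical one is the observation that $p^2-q$ is a homogeneous polynomial of degree two, hence $(p^2-q)(-x)=(p^2-q)(x)$; since every line through the origin meets the open halfspace in a ray, non-negativity of $p^2-q$ on $\{x_2>kx_1\}$ upgrades, via $x\mapsto -x$, to non-negativity of $p^2-q$ on all of $\mathbb{R}^2$. Equivalently, the symmetric matrix
\begin{equation*}
\tfrac{1}{2}D^2(p^2-q)=\begin{pmatrix} a_2-\alpha & -\beta\\ -\beta & c_2+\alpha\end{pmatrix}
\end{equation*}
must be positive semidefinite, which (using also $a_2+c_2>0$) forces $\alpha\leq a_2$, $\alpha\geq -c_2$, and $(a_2-\alpha)(c_2+\alpha)-\beta^2\geq 0$. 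Substituting $\beta^2=-(\alpha-a_1)(\alpha+c_1)$ and expanding yields the identity
\begin{equation*}
(a_2-\alpha)(c_2+\alpha)-\beta^2=a_2c_2-a_1c_1+\alpha(a_2+c_1-a_1-c_2)=-\delta_1(\alpha),
\end{equation*}
so the determinant condition is precisely $\delta_1(\alpha)\leq 0$. Combining with the constraints from the first step produces the stated interval $\max(a_1,-c_2)\leq\alpha\leq\min(-c_1,a_2)$; the converse direction is immediate from the same algebra, since given the stated relations one checks directly that $u^1$ is $C^{1,1}$, solves $\Delta u^1=\lambda_1\chi_{\{u^1=p^1\}}$, and satisfies $p^1\leq u^1\leq p^2$.

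For the second assertion I apply the argument just proved to $\tilde u=-u^2$, which is a halfspace solution to the double obstacle problem with new obstacles $\tilde p^1=-p^2$, $\tilde p^2=-p^1$, harmonic piece $-q$, new coefficients $(\tilde a_1,\tilde c_1,\tilde a_2,\tilde c_2)=(-a_2,-c_2,-a_1,-c_1)$, and new parameters $(\tilde\alpha,\tilde\beta,\tilde k)=(-\alpha,-\beta,m)$. Inserting these into the already proved identities $\tilde\beta^2=-(\tilde\alpha-\tilde a_1)(\tilde\alpha+\tilde c_1)$, $\tilde k=\tilde\beta/(\tilde\alpha+\tilde c_1)$ and $\tilde\delta_1(\tilde\alpha)\leq 0$, and translating back, returns $\beta^2=-(\alpha-a_2)(\alpha+c_2)$, $m=\beta/(\alpha+c_2)$, the same interval for $\alpha$, and $\delta_2(\alpha)\leq 0$. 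The only delicate step throughout is the halfspace-to-global upgrade for $p^2-q$; everything else reduces to coefficient matching and $2\times 2$ linear algebra.
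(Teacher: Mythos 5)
Your proposal is correct and follows essentially the same route as the paper: the $C^{1,1}$ matching across $\{x_2=kx_1\}$ gives the double-root conditions \eqref{betak}, and the constraint $u^1\leq p^2$ reduces to global non-negativity of the homogeneous quadratic $p^2-q$ (the paper phrases this as non-negativity of the polynomial $f_2(t)=(p^2-q)/x_1^2$, you as positive semidefiniteness of $D^2(p^2-q)$, which is the same condition), yielding exactly $\delta_1(\alpha)\leq 0$ and the interval for $\alpha$. Your explicit involution for the second half is just a cleaner way of writing the paper's ``can be proved similarly.''
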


\begin{proof}
In order to show that $ u^1$ is a  halfspace solution (for the double obstacle problem) corresponding to $ p^1$, we need 
to verify that $ u^1 \leq p^2$ in $\mathbb{R}^2$.

\par
Let $ t = \frac{x_2}{x_1}$, and consider the following polynomial
 \begin{equation*}
f_1(t):=\frac{q(x)-p^1(x)}{x_1^2}= (-\alpha-c_1)t^2+2 \beta t+(\alpha-a_1) \geq 0.
\end{equation*}
Since $ u^1 \in C^{1,1}$, the polynomial $ f_1$  has a double root at $ t=k$, and therefore \eqref{betak} holds, with  $-c_1 \geq \alpha \geq a_1 $.
 Now let us prove \eqref{delta1}. 
Consider the polynomial
\begin{equation*}
f_2(t):=\frac{p^2(x)-q(x)}{x_1^2}= (c_2+\alpha)t^2-2 \beta t+(a_2-\alpha).
\end{equation*}
Since $ q \leq p^2$ on a halfplane, the polynomial $ f_2$ has to be  nonnegative. The latter is equivalent to the following inequalities
\begin{align*}
a_2 \geq \alpha \geq -c_2~\textrm{ and } \delta_1:=\beta^2-(a_2-\alpha)(c_2+\alpha)\leq 0.
\end{align*}
Taking into account \eqref{betak}, we obtain the desired inequality \eqref{delta1}.

\par
The corresponding statement for $ u^2$ can be proved similarly.

\end{proof}

\begin{theorem} \label{theoremhalfspace} {\textbf{(The existence of halfspace solutions)}} \\
Let the assumptions of Theorem \ref{theorem1} hold.
\begin{enumerate}
\item[Case 1)] If $ a_2+c_1= c_2 +a_1=0$, then
there are infinitely many rotational invariant halfplane solutions corresponding to $p^1$ ( $ p^2$).

\item[Case 2)] If $ (a_1+c_2)(c_1+ a_2) < 0$, then there 
exist infinitely many halfplane solutions corresponding to $p^1$ ( $p^2$), and $\varGamma_i$ always remains inside a fixed cone, thus 
halfplane solutions are not rotationally invariant on the entire plane.

\item[Case 3)] $ (a_1+c_2)(c_1+ a_2) \geq 0$, and $a_1+c_2 \neq 0$ or $a_2+c_1\neq 0$. If $ a_1+c_2\geq 0, a_2+c_1>0$ then there are 
 infinitely many rotationally invariant halfspace solutions corresponding to $p^1$, and at most two halfspace solutions corresponding to $p^2$.
 If $ a_1+c_2\leq 0, a_2+c_1<0$ then there are 
 infinitely many rotational invariant halfspace solutions corresponding to $p^2$, and at most two halfspace solutions corresponding to $p^1$.
 \end{enumerate}
\end{theorem}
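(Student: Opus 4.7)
The statement is essentially a bookkeeping exercise on top of Lemma \ref{lemmahalfspace}: halfspace solutions corresponding to $p^1$ are parametrised by pairs $(\alpha,\beta)$ with $\beta^2=-(\alpha-a_1)(\alpha+c_1)$ and $\alpha$ in the interval
\begin{equation*}
I:=[\max(a_1,-c_2),\,\min(-c_1,a_2)],\qquad \delta_1(\alpha)\leq 0,
\end{equation*}
and halfspace solutions corresponding to $p^2$ by the same $I$ together with $\delta_2(\alpha)=-\delta_1(\alpha)\leq 0$. Every halfspace then carries a free-boundary slope $k=\beta/(\alpha+c_1)$ (or $m=\beta/(\alpha+c_2)$ in the $p^2$ case), so the whole proof reduces to (i) analysing the affine function $\delta_1$ on $I$ and (ii) tracking how $k$ (or $m$) sweeps $S^1$ as $\alpha$ and the sign of $\beta$ vary.

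\textbf{Case 1.} Plugging in $c_2=-a_1$ and $c_1=-a_2$ kills both the constant term and the slope of $\delta_1$, so $\delta_1\equiv 0$ and $I=[a_1,a_2]$. For each $\alpha\in[a_1,a_2]$ the two choices $\beta_\pm=\pm\sqrt{-(\alpha-a_1)(\alpha-a_2)}$ yield $k_\pm=\beta_\pm/(\alpha-a_2)$; by continuity $k$ takes the value $0$ at $\alpha=a_1$, runs monotonically to $\pm\infty$ as $\alpha\uparrow a_2$, and covers all of $\mathbb R\cup\{\infty\}$. This furnishes the rotationally invariant family for $p^1$, and the $p^2$ case is symmetric since $\delta_2\equiv 0$ as well.

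\textbf{Case 2.} The slope $a_1-c_1-a_2+c_2$ of $\delta_1$ is nonzero (else one would be in Case 1 after the reduction \eqref{zero}--\eqref{one}), so $\delta_1$ has the unique root $\alpha^\star$ given by \eqref{singlealpha}, which lies in the interior of $I$ because it already corresponds to a double-cone solution (Case 2 of the proof of Theorem \ref{theorem1}). Hence $\{\delta_1\leq 0\}\cap I$ and $\{\delta_2\leq 0\}\cap I$ are the two closed subintervals of $I$ meeting at $\alpha^\star$, each of positive length, producing infinitely many halfspace solutions for both obstacles. Since $\alpha$ is now confined to a proper subinterval of $I$, the map $\alpha\mapsto k(\alpha,\pm)$ given by \eqref{betak} traces only a proper subarc of $S^1$, which is the fixed cone in the statement.

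\textbf{Case 3.} Assume $a_1+c_2\geq 0$ and $a_2+c_1>0$. Direct substitution of the four candidate endpoints gives
\begin{align*}
\delta_1(a_1)&=(a_1-a_2)(a_1+c_2),& \delta_1(a_2)&=(a_1-a_2)(a_2+c_1),\\
\delta_1(-c_1)&=(c_1-c_2)(a_2+c_1),& \delta_1(-c_2)&=(c_1-c_2)(a_1+c_2),
\end{align*}
all $\leq 0$ by \eqref{a2a1}. Linearity of $\delta_1$ then gives $\delta_1\leq 0$ on the whole of $I$, hence infinitely many admissible $\alpha$ for $p^1$; the sweep argument of Case 1 applies verbatim to show the family is rotationally invariant. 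For $p^2$ the inequality $\delta_2\leq 0$ becomes $\delta_1\geq 0$, which on $I$ can only hold where $\delta_1=0$; since $\delta_1$ is nonconstant this happens at most once, and the two sign choices of $\beta$ give at most two solutions. The symmetric subcase $a_1+c_2\leq 0,\ a_2+c_1<0$ follows after swapping the roles of $p^1$ and $p^2$. The main delicate point, which is handled by the endpoint computation above, is ensuring that $k$ really attains every direction (and not merely a dense subset) in the rotationally invariant subcases; degenerate boundary values $\alpha\in\{a_1,a_2,-c_1,-c_2\}$ where $\beta=0$ must be read projectively, as they correspond to the two limiting axis-aligned halfspaces.
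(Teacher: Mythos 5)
Your proposal is correct and takes essentially the same route as the paper: both reduce everything to Lemma \ref{lemmahalfspace} and to the sign of the affine function $\delta_1$ on the admissible interval for $\alpha$, the only cosmetic difference being that you evaluate $\delta_1$ at the four candidate endpoints and invoke affinity where the paper bounds $\delta_1(\alpha)$ by a direct chain of inequalities and, in Case 2, computes the cone constant $K$ explicitly rather than arguing by continuity and compactness. One small imprecision: in your Case 3 the function $\delta_1$ may in fact be constant (this happens when $a_1+c_2=a_2+c_1>0$), but then it equals $(a_1-a_2)(a_1+c_2)<0$, so the conclusion that $\delta_2\leq 0$ can hold for at most one $\alpha$ still stands; the fact you actually need is $\delta_1\not\equiv 0$, which is guaranteed because $\delta_1\equiv 0$ would force $a_1+c_2=a_2+c_1=0$, i.e.\ Case 1.
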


\begin{proof}
\textit{ \textbf{ Case  1)}}  
 In this case $ \delta_1(\alpha)=\delta_2(\alpha)=0$ for any $\alpha$.
 By Lemma \ref{lemmahalfspace}, $-c_1\geq \alpha \geq a_1$ can be any number, and  $k $ ($m$)
take any value in the closed interval $\left[-\infty, \infty \right]$.
Hence $\varGamma_1$ ($\varGamma_2$) can be any line on the plane.  Furthermore, if  $u^1$
is a halfspace solution corresponding to $p^1$, then $\varGamma_2=\{u^1=p^2\}$ is a halfline, perpendicular to $\varGamma_1$.

\textit{ \textbf{ Case  2)}}  
Without loss of generality we may assume that $a_1+c_2>0$ and $ a_2+c_1<0$. 
Employing Lemma \ref{lemmahalfspace}, \eqref{delta1} together with \eqref{a2a1} and \eqref{lambda12}, we obtain
the following chain of inequalities
\begin{equation} \label{alphahalfspace}
-c_2<a_1 \leq \alpha \leq \frac{a_2c_2-a_1c_1}{ a_1-c_1-a_2+c_2} < a_2<-c_1,
\end{equation}
and therefore 
\begin{equation} \label{kGamma1}
 | k | = \sqrt{ \frac{\alpha-a_1}{-\alpha-c_1}} \leq \sqrt{-\frac{(a_2-a_1)(a_1+c_2)}{(c_2-c_1)(a_2+c_1)}}:= K>0.
\end{equation}
It follows from Lemma \ref{lemmahalfspace} that for any $\alpha$ satisfying \eqref{alphahalfspace} there are four halfspace solutions $u^1$ corresponding to $p^1$, and 
\begin{equation*}
 \varGamma_1=\partial{\{u^1>p^1\}}=\{x: x_2=\pm k x_1\} \subset \{x: | x_2| \leq K |x_1|\}
 \end{equation*}
  by \eqref{kGamma1}, and $ \varGamma_2=\{0\}$ if $a_1< \alpha< \frac{a_2c_2-a_1c_1}{a_1-c_1-a_2+c_2}$.

\par
Let us also discuss the halfspace solutions corresponding to $ p^2$.
According to Lemma \ref{lemmahalfspace}, \eqref{delta2},
\begin{equation*} 
-c_2<a_1 < \frac{a_2c_2-a_1c_1}{ a_1-c_1-a_2+c_2} \leq \alpha \leq a_2<-c_1,
\end{equation*}
and 
\begin{equation*}
 | m | = \sqrt{ \frac{a_2-\alpha}{\alpha+c_2}} \leq \sqrt{-\frac{(a_2-a_1)(a_2+c_1)}{(c_2-c_1)(a_1+c_2)}}:= M >0.
\end{equation*}
Therefore we obtain infinitely many halfspace solutions $u^ 2$ corresponding to $p^2$, and 
\begin{equation*}
 \varGamma_2=\partial{\{u^2>p^2\}} \subset \{x: | x_2| \leq M |x_1|\}.
 \end{equation*}

\textit{ \textbf{ Case 3)}}  
Assume that  $a_1+c_2\geq 0$  and $ c_1+a_2 \geq 0$, then
\begin{align*}
\delta_1= a_1c_1-a_2c_2 +\alpha(a_1+c_2)-\alpha (a_2+c_1) \leq \\
 a_1c_1 -a_2c_2-c_1(a_1+c_2) -a_1(a_2+c_1)=
-(a_1+c_2)(a_2+c_1)\leq 0,
\end{align*} 
for any $\alpha$, such that $-c_2 \leq a_1\leq  \alpha \leq -c_1\leq a_2 $.  Hence there are infinitely many halfspace solutions corresponding to $ p^1$, and 
$ \varGamma_1$ can be any line on the plane (depending on $ \alpha$).
Next, assuming that  $ a_1+c_2>0$ and $ c_1+a_2>0$, we show that there are no halfplane solutions corresponding to the upper obstacle $ p^2$.
Indeed, in this case
\begin{align*}
\delta_2(\alpha)= -a_1c_1+a_2c_2-\alpha(a_1+c_2)+\alpha(a_2+c_1) \geq \\
 -a_1c_1+a_2c_2+c_1 (a_1+c_2)+a_1(a_2+c_1)
=(a_1+c_2)(a_2+c_1)>0,
 \end{align*}
and the statement follows from Lemma \ref{lemmahalfspace}, \eqref{delta2}. In this case  $\varGamma_2=\{0\}$  for any  halfspace solution $u^1$.

\par
Finally, if $ a_1+c_2>0$ but $c_1+a_2=0$, then $ \alpha = -c_1=a_2$, and we obtain only two 
halfspace solutions corresponding to $ p^2$.

\par
If $ a_1+c_2 <0$ and $ c_1+a_2 \leq 0$, then  we can consider the double obstacle problem with obstacles $-p^2 \leq -p^1$, and see that there at most two halfspace solutions
corresponding to $p^1$.

\end{proof}

\begin{corollary} \label{corehalfspace}
Under the assumptions of Theorems \ref{theorem1}, \ref{theoremhalfspace} and Corollary \ref{core}
we have that;
\begin{enumerate}
\item[Case 1)] If $ P \equiv 0$,
  there are infinitely many rotational invariant halfspace solutions corresponding to $p^1$ $( p^2 )$.
  \item[Case 2)] $P$ changes the sign.  There are infinitely many halfspace solutions corresponding to $p^1$ $ ( p^2 ) $.
    \item[Case 3)] $P$ has a sign, if $ P(x)\geq 0$, there are infinitely many rotational invariant halfspace solutions corresponding to $p^1$, and at most two
 halfspace solutions corresponding to $p^2$. Similarly, if $ P(x)\leq 0$, then are infinitely many rotational invariant halfspace solutions corresponding to $p^2$, and at most two halfspace solutions corresponding to $p^1$.
 \end{enumerate}
\end{corollary}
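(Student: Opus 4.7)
The plan is to derive Corollary \ref{corehalfspace} as a direct reformulation of Theorem \ref{theoremhalfspace}, matching the three cases of the corollary to the three cases of the theorem through the sign behaviour of the polynomial
\begin{equation*}
P(x_1,x_2) = (a_1+c_2)\,x_1^2 + (a_2+c_1)\,x_2^2.
\end{equation*}

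The first step is to record a sign dictionary: since $P$ is a diagonal quadratic form in two variables, its sign is completely determined by its two coefficients $a_1+c_2$ and $a_2+c_1$. Specifically, $P\equiv 0$ iff $a_1+c_2 = 0$ and $a_2+c_1 = 0$; $P$ changes sign iff $(a_1+c_2)(a_2+c_1)<0$; and $P$ has a definite sign and is not identically zero iff $(a_1+c_2)(a_2+c_1)\geq 0$ with at least one coefficient nonzero, with $P\geq 0$ corresponding to $a_1+c_2,\,a_2+c_1 \geq 0$ and $P\leq 0$ corresponding to $a_1+c_2,\,a_2+c_1\leq 0$. These three alternatives are precisely the hypotheses of Cases 1, 2 and 3 of Theorem \ref{theoremhalfspace}.

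With this dictionary in hand, each case of the corollary follows by a direct application of the corresponding case of Theorem \ref{theoremhalfspace}: Case 1 and Case 2 transfer verbatim, while in Case 3 the two subcases $P\geq 0$ and $P\leq 0$ correspond to the two subcases $a_1+c_2,\,a_2+c_1\geq 0$ and $a_1+c_2,\,a_2+c_1 \leq 0$ of the theorem, yielding the asymmetric statements for halfspace solutions corresponding to $p^1$ versus $p^2$. Since the corollary is essentially a cosmetic repackaging of the theorem, no new analytic work is required, and I do not anticipate any genuine obstacle; the only step meriting explicit attention is the verification of the sign dictionary above, which is elementary for a diagonal binary quadratic form.
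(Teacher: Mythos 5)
Your proposal is correct and matches the paper, which gives no separate proof of Corollary \ref{corehalfspace}: the corollary is exactly the translation of Theorem \ref{theoremhalfspace} through the sign dictionary for the diagonal form $P$, which you verify. The only point worth a glance is the boundary subcase $a_1+c_2>0$, $a_2+c_1=0$ (where $P\geq 0$ but the theorem's Case 3 subcase as literally stated requires $a_2+c_1>0$); this is covered at the end of the theorem's own proof, so nothing is missing.
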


\bigskip

\section{Uniqueness of blow-ups, Case 1}

Let $ u$ be a solution to the double obstacle problem \eqref{equationsection3}, with polynomial obstacles
$p^1\leq p^2$, satisfying $p^1(x)=p^2(x)$ iff $x=0$.
We study the  uniqueness of blow-ups of $u$ in \textit{ Case 1,} i.e. when the polynomials $ p^i$
are given by 
\begin{equation*}
 p^1(x)=-ax_1^2-cx_2^2, ~~p^2(x)= cx_1^2+a x_2^2, \textrm{ where } a+c >0.
\end{equation*}

Consider the following harmonic polynomial $ h(x):=\frac{-a+c}{2}x_1^2+ \frac{a-c}{2}x_2^2$, then 
\begin{equation*}
 p^1(x)-h(x)= \frac{a+c}{2}(-x_1^2-x_2^2), ~ ~ p^2(x)-h(x)= \frac{a+c}{2}(x_1^2+x_2^2).
\end{equation*}
Thus it is enough to study the uniqueness of blow-ups in the case
\begin{equation} \label{p1=-p2}
 p^1(x)=-x_1^2-x_2^2, \textrm{ and } p^2(x) =x_1^2+ x_2^2.
\end{equation}
 From now on we study the solution $ \frac{2(u-h)}{a+c}$ instead of $u$, but still call it $u$.

\par
Let $ r_j \rightarrow 0+$, as $ j\rightarrow\infty $, and 
\begin{align*}
 u_0(x):=\lim_{j \rightarrow\infty} \frac{u(r_j  x)}{r_j ^2}
\end{align*}
be a blow-up of $ u $ at the origin. We know  that there exists
\begin{equation*}
\lim_{r\rightarrow 0} W(u , r, 0)= \lim_{j\rightarrow \infty} W(u_{r_j}, 1, 0 ) \equiv W(u_0, 1, 0).
\end{equation*}
Hence if $ \bar{u}_0$ is another blow-up solution, then $ W(u_0, 1, 0)=W(\bar{u}_0, 1, 0) $.
Denote by 
\begin{equation}\label{Ai}
  \mathcal{C}_i:= \{x=(x_1,x_2)\in \mathbb{R}^2;  u_0(x) = p^i(x)\},
  \end{equation}
  where $ p^i $ are the polynomials in 
\eqref{p1=-p2}.

\par
Let us 
calculate the values of $W (u_0, 1, 0) $ for all the possible blow-up solutions $u_0$.
By definition 
\begin{equation} \label{Weissu0}
\begin{aligned}
  W( u_0,1,0)= 
  \int_{B_1}  \lvert \nabla u_0 \rvert^2 dx  -2 \int_{\partial B_1} u_0^2 d S 
 + 2\int_{B_1}  \lambda_1 u_0 \chi_{\{ u_0 = p^1\}}+  \lambda_2 u_0 \chi_{ \{u_0= p^2\}} dx \\
 =- \int_{B_1}  u_0 \Delta u_0  dx 
+ 2\int_{B_1}  \lambda_1 u_0 \chi_{\{ u_0 = p^1\}}+  \lambda_2 u_0 \chi_{ \{u_0= p^2\}} dx \\
 = \lambda_1 \int_{B_1 \cap \mathcal{C}_1}  p^1  dx+ \lambda_2 \int_{B_1 \cap \mathcal{C}_2}  p^2  dx.
 \end{aligned}
\end{equation}
After substituting $ \lambda_1=-4$ and $ \lambda_2=4$ in \eqref{Weissu0}, we obtain
\begin{equation*}
\begin{aligned}
  W( u_0,1,0)= 
  4 \int_{B_1 \cap \mathcal{C}_1} r^3  dr d\theta+ 4 \int_{B_1 \cap \mathcal{C}_2}  r^3  drd\theta,
 \end{aligned}
\end{equation*}
and we may conclude from Theorem \ref{theorem1} that 
\begin{equation} \label{wlimit}
   W( u_0,1,0)= 
   \begin{cases}
  0,  & \mbox{if } u_0 \textrm{ is a  harmonic second order polynomial} \\
  2 \pi,  & \mbox{if } u_0 \equiv p^1 \textrm{ or } u_0  \equiv p^2 \\
  {\pi}, & \mbox{if } u_0 \textrm{ is a halfspace or a double-cone solution.}
 \end{cases}
\end{equation}
This gives three types of possible blow-ups at a fixed free boundary point.

\par
Denote by
\begin{equation} \label{uj}
 u^j(x):=\frac{u(r_j x )}{r_j^2}, 
\end{equation}
and assume that 
\begin{equation} \label{ujconv}
 u^j\rightarrow u_0  \textrm{ in } C^{1,\gamma}(B_1).
\end{equation}
If $ u_0$ is a polynomial or a halfspace solution to the double obstacle problem, then $u_0$
is a blow-up solution to a single obstacle problem. In this case the known techniques can be used to prove the uniqueness of blow-ups and to analyse the free boundary.
We will provide a rigorous argument for that in the end of this section.

\par
For now we focus on the case when $ u_0$ is a double-cone solution.
According to Theorem \ref{theorem1}, $ u_0$ can be described in terms of parameters  $-1<\alpha_1, \alpha_2 <1$.
Let $\alpha_i = \cos \phi_i$, for some
$ 0< \phi_1, \phi_2 <\pi$.  According to Lemma \ref{qS}, $ \beta_{i}=  \pm \sqrt{1-\alpha_i^2}= \pm \sin \phi_i$, 
$ m_{i}= \frac{\beta_{i}}{\cos\phi_i-1}=\mp \tan{\frac{\phi_i}{2}}$ and 
$ k_{i}= \frac{\beta_{i}}{\cos\phi_i +1}=\pm \cot{\frac{\phi_i}{2}}$. 
Referring to \eqref{q12}, we see that
\begin{align*}
 q^i(r,\theta)= x_1^2 \cos \phi_i -x_2^2\cos \phi_i\pm 2 x_1x_2 \sin \phi_i 
 =r^2( \cos \phi_i \cos^2\theta -\cos \phi_i \sin^2 \theta \pm \sin 2\theta \sin \phi_1) \\
 = r^2 (\cos \phi_i  \cos 2\theta \pm \sin 2\theta \sin \phi_i) =
 r^2 \cos( 2 \theta \mp \phi_i).
\end{align*}
Hence without loss of generality   $ u_0$ is the following function
\begin{equation} \label{u01}
  u_{0}=\mu= \mu_{\phi_{1}, \phi_2}(r, \theta):=
  \begin{cases}
  r^2,  & \mbox{if }~ -\phi_2 \leq 2 \theta \leq \phi_1 \\
  r^2 \cos(2\theta-\phi_1), & \mbox{if }~  \phi_1 \leq  2 \theta \leq \pi +\phi_1 \\
  r^2\cos(2\theta+\phi_2), & \mbox{if } ~ -\pi -\phi_2  \leq  2 \theta \leq  -\phi_2 \\
    -r^2,  & \mbox{otherwise. }  \\
 \end{cases}
\end{equation}

\par
For further analysis we need the following two easy lemmas.
\begin{lemma}\label{easylemma}
 Let $u$ and $ u_0 $ be two solutions (with different boundary conditions) to the double obstacle problem in $B_2 \subset \mathbb{R}^n$, with
 given
 obstacles $ \psi^1 \leq \psi^2$. 
 Then for any $\zeta \in C^2_0({B_2})$ the following inequality holds
 \begin{equation} \label{w12conv}
    \int_{B_2} ( \nabla u -\nabla u_0) \cdot \nabla \left( \zeta^2 (u-u_0) \right) dx \leq 0.
     \end{equation}
 Furthermore,
 \begin{equation}\label{w12}
  \lVert u-u_0 \rVert_{W^{1,2}( B_{1})} \leq C_n   \lVert u-u_0 \rVert_{L^{2}( B_{2})} ,
 \end{equation}
where $C_n$ is just a dimensional constant.
\end{lemma}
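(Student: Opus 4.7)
The plan is to derive \eqref{w12conv} from the variational characterization of the double obstacle problem, and then obtain the Caccioppoli-type estimate \eqref{w12} by a standard absorption argument from \eqref{w12conv}.

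For \eqref{w12conv}, I would use that $u$ minimizes $J(v)=\int_{B_2}|\nabla v|^2\,dx$ over admissible $v$ with $v=u$ on $\partial B_2$ and $\psi^1\le v\le\psi^2$, which yields the first-order variational inequality
\begin{equation*}
\int_{B_2}\nabla u\cdot\nabla(v-u)\,dx\ge 0
\end{equation*}
for every such $v$. Since \eqref{w12conv} scales quadratically in $\zeta$, I may normalize so that $\zeta^2\le 1$ in $B_2$; then
\begin{equation*}
v:=u-\zeta^2(u-u_0)=(1-\zeta^2)u+\zeta^2u_0
\end{equation*}
is a convex combination of $u$ and $u_0$, hence also lies in $[\psi^1,\psi^2]$, and agrees with $u$ on $\partial B_2$ because $\zeta\in C^2_0(B_2)$. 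Testing the variational inequality for $u$ with this $v$ gives $\int\nabla u\cdot\nabla(\zeta^2(u-u_0))\,dx\le 0$. The symmetric argument applied to $u_0$ with test function $u_0+\zeta^2(u-u_0)$ gives $\int\nabla u_0\cdot\nabla(\zeta^2(u-u_0))\,dx\ge 0$. Subtracting these two yields \eqref{w12conv}.

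For \eqref{w12}, I would expand $\nabla(\zeta^2(u-u_0))=\zeta^2\nabla(u-u_0)+2\zeta(u-u_0)\nabla\zeta$ inside \eqref{w12conv}, obtaining
\begin{equation*}
\int_{B_2}\zeta^2|\nabla(u-u_0)|^2\,dx\le -2\int_{B_2}\zeta(u-u_0)\nabla(u-u_0)\cdot\nabla\zeta\,dx.
\end{equation*}
A weighted Cauchy--Schwarz on the right-hand side, with weight chosen so that half of the gradient term on the left can be absorbed, leads to
\begin{equation*}
\int_{B_2}\zeta^2|\nabla(u-u_0)|^2\,dx\le 4\int_{B_2}(u-u_0)^2|\nabla\zeta|^2\,dx.
\end{equation*}
Choosing a standard cutoff $\zeta\in C^2_0(B_2)$ with $\zeta\equiv 1$ on $B_1$ and $|\nabla\zeta|\le C_n$ then gives \eqref{w12} after combining with the trivial $L^2$ bound $\|u-u_0\|_{L^2(B_1)}\le\|u-u_0\|_{L^2(B_2)}$.

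There is no substantial obstacle: the only point requiring care is the admissibility of the perturbations $v=u\mp\zeta^2(u-u_0)$, which is what forces the harmless normalization $\zeta^2\le 1$; convex combinations of obstacle-respecting functions remain obstacle-respecting, so both test functions stay between $\psi^1$ and $\psi^2$.
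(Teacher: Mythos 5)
Your proposal is correct and follows essentially the same route as the paper: the paper obtains \eqref{w12conv} by perturbing each solution with $t\zeta^2$ times the difference for small $t>0$ and passing to the limit, which is just the first-order variational inequality you invoke directly (your normalization $\zeta^2\le 1$ playing the role of the paper's small $t$), and the Caccioppoli estimate \eqref{w12} is then derived by the same expansion-and-absorption via Young's inequality with the same constant. No gaps.
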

\begin{proof}
The proof is quite standard. Given a solution $u$ to the double obstacle problem in $ B_1$, then 
for any 
  $ \zeta \in C^2_0(B_{1})$, the function 
$u_t(x):= u+t \zeta^2 (u_0-u) $ is admissible for 
  $ t> 0$  small enough depending only on $\zeta$. Hence 
  \begin{align*}
  \int_{B_2} \lvert \nabla u \rvert^2 dx \leq   \int_{B_2} \lvert \nabla u_t\rvert^2 dx =
   \int_{B_2} \lvert \nabla u \rvert^2 dx +2t  \int_{B_2} \nabla u \cdot \nabla \left( \zeta^2 (u_0-u) \right) dx \\
   +t^2  \int_{B_2} \left|  \nabla \left( \zeta^2 (u_0-u) \right) \right| ^2 dx, 
  \end{align*}
 after dividing the last inequality by $ t>0$, and taking the limit as $ t$ goes to zero, we obtain
 \begin{equation} \label{uu0}
  0 \leq  \int_{B_2} \nabla u \cdot \nabla \left( \zeta^2 (u_0-u) \right) dx = 
  \int_{B_2} (u_0-u) \nabla u \cdot \nabla \zeta^2 dx+\int_{B_2} \zeta^2  \nabla u \cdot \nabla ( u_0-u)dx.
 \end{equation}
 Similarly, the function $u_0+t \zeta^2 (u -u_0) $ is admissible for the double obstacle problem, having
solution $ u_0$. 
Therefore
 \begin{equation} \label{u0u}
  0 \leq  \int_{B_2} \nabla u_0 \cdot \nabla \left( \zeta^2 (u-u_0) \right) dx = 
  \int_{B_2} (u-u_0) \nabla u_0 \cdot \nabla \zeta^2 dx+\int_{B_1} \zeta^2  \nabla u_0 \cdot \nabla ( u-u_0)dx.
 \end{equation}
 The inequalities \eqref{uu0} and \eqref{u0u} together imply the inequality \eqref{w12conv}, and we proceed to the proof of the second statement in our lemma.
 
\par
Choose $ \zeta \in C^2_0(B_{3/2})$, such that $ 0 \leq \zeta \leq 1$ and $ \zeta \equiv 1$ in $B_{1}$.
Combining the inequalities \eqref{uu0} and \eqref{u0u}, we obtain
\begin{align*}
 \int_{B_2} \zeta^2 \lvert \nabla u - \nabla u_0 \rvert^2 dx \leq 
 -2 \int_{B_2} \zeta (u-u_0)(\nabla u -\nabla u_0 ) \cdot \nabla \zeta dx \\
 \leq 
 2 \int_{B_2} \lvert \nabla \zeta \rvert^2 (u-u_0)^2 dx 
 +\frac{1}{2}\int_{B_2} \zeta^2\lvert \nabla u -\nabla u_0 \rvert^2 dx,
\end{align*}
where we used Young's inequality in the last step. Hence
\begin{equation*}
\int_{B_{1}} \lvert \nabla u -\nabla u_0 \rvert^2 dx \leq 
\int_{B_2} \zeta^2\lvert \nabla u -\nabla u_0 \rvert^2 dx \leq 4 \int_{B_2} \lvert \nabla \zeta \rvert^2 (u-u_0)^2 dx 
\leq C_n \Arrowvert u- u_0 \rVert^2_{L^2(B_2)},
\end{equation*}
where $ C_n $ is just a dimensional constant, depending only on $ \zeta$. The proof of the lemma is now complete.
\end{proof}

\begin{lemma} \label{easylemma2}
Let $ \{u^j\}$ and $\{ \mu^j\}$ be given sequences of solutions to the double obstacle problem in $ B_2$. 
Assume that $ \delta^j:=|| u^j -\mu^j||_{L^2(B_2)} \rightarrow 0 $, as $ j\rightarrow \infty$. Define 
 $ v^j:= \frac{u^j-\mu^j}{\delta^j}$. Then up to a subsequence
 \begin{equation} \label{convvj}
 v^j \rightharpoonup v^0 \textrm{ weakly in } W^{1,2}(B_{1}), \textrm{ and }
 v^j \rightarrow v^0  \textrm{ in } L^{2}(B_{1}).
\end{equation}
Furthermore, if $v^0 \Delta v^0=0$ in a weak sense, then
\begin{equation} \label{strong}
|| v ^j-v^0||_{W^{1,2}(B_{1/2})} \rightarrow 0.
\end{equation}
\end{lemma}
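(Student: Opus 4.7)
The plan is in two stages: first obtain the compactness \eqref{convvj} directly from the previous lemma, then upgrade weak to strong $W^{1,2}$ convergence on $B_{1/2}$ by turning the variational inequality \eqref{w12conv} into a one-sided energy identity in the limit.

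For \eqref{convvj}, I would apply the bound \eqref{w12} of Lemma \ref{easylemma} to the pair $(u^j,\mu^j)$, which are both solutions to the double obstacle problem with the same obstacles. This yields $\lVert u^j-\mu^j \rVert_{W^{1,2}(B_1)} \leq C_n \delta^j$, so $\lVert v^j \rVert_{W^{1,2}(B_1)} \leq C_n$. Reflexivity together with the Rellich--Kondrachov theorem then gives, along a subsequence, $v^j \rightharpoonup v^0$ weakly in $W^{1,2}(B_1)$ and $v^j \to v^0$ strongly in $L^2(B_1)$.

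For \eqref{strong}, I would fix a cutoff $\zeta \in C_0^2(B_1)$ with $0\leq \zeta \leq 1$ and $\zeta \equiv 1$ on $B_{1/2}$. Applying \eqref{w12conv} to $(u^j,\mu^j)$ with this $\zeta$ and dividing by $(\delta^j)^2$ gives
\begin{equation*}
\int_{B_1} \zeta^2 \lvert \nabla v^j \rvert^2\, dx \leq -2 \int_{B_1} v^j \zeta\, \nabla v^j \cdot \nabla \zeta\, dx.
\end{equation*}
Since $v^j \zeta \to v^0 \zeta$ strongly in $L^2$ while $\nabla v^j \rightharpoonup \nabla v^0$ weakly in $L^2$, the right-hand side passes to the limit, yielding
\begin{equation*}
\limsup_{j\to\infty} \int_{B_1} \zeta^2 \lvert \nabla v^j \rvert^2\, dx \leq -2 \int_{B_1} v^0 \zeta\, \nabla v^0 \cdot \nabla \zeta\, dx.
\end{equation*}

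The final step is to recognise this upper bound as $\int_{B_1} \zeta^2 \lvert \nabla v^0 \rvert^2\, dx$. I would read the hypothesis ``$v^0 \Delta v^0 = 0$ in a weak sense'' as the integral identity $\int \nabla v^0 \cdot \nabla(\zeta^2 v^0)\, dx = 0$ for every admissible $\zeta$; expanding $\nabla(\zeta^2 v^0)=\zeta^2 \nabla v^0 + 2\zeta v^0 \nabla \zeta$ rewrites this as $\int \zeta^2 \lvert \nabla v^0 \rvert^2\, dx = -2\int v^0 \zeta\, \nabla v^0 \cdot \nabla \zeta\, dx$. Combining with the preceding display gives $\limsup_j \int \zeta^2 \lvert \nabla v^j \rvert^2 \leq \int \zeta^2 \lvert \nabla v^0 \rvert^2$, while weak lower semicontinuity of $w\mapsto \int \zeta^2 \lvert \nabla w \rvert^2\, dx$ supplies the reverse inequality for the $\liminf$. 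Convergence of these weighted norms plus weak convergence of the gradients upgrades to $\zeta \nabla v^j \to \zeta \nabla v^0$ strongly in $L^2(B_1)$; restricting to $B_{1/2}$ where $\zeta \equiv 1$ and combining with the $L^2$ convergence of $v^j$ itself then yields \eqref{strong}. The main obstacle I expect is pinning down the correct reading of the weak hypothesis on $v^0 \Delta v^0$: it must match exactly the limiting right-hand side produced by the one-sided inequality \eqref{w12conv}, which is precisely why the assumption is stated in this form.
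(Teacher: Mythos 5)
Your proposal is correct and follows essentially the same route as the paper: the uniform $W^{1,2}$ bound from Lemma \ref{easylemma} plus Rellich--Kondrachov for \eqref{convvj}, then the one-sided inequality \eqref{w12conv} divided by $(\delta^j)^2$, passage to the limit via strong-times-weak convergence, the weak identity $\int \nabla v^0 \cdot \nabla(\zeta^2 v^0)\,dx = 0$ to identify the $\limsup$ bound with $\int \zeta^2 |\nabla v^0|^2\,dx$, and weak lower semicontinuity for the matching $\liminf$. No gaps; this is the paper's argument.
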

\begin{proof}
According to Lemma \ref{easylemma}, $ \Arrowvert v^j \rVert_{ W^{1,2}(B_{1})}\leq C_n$,
where $ C_n$ is a dimensional constant. Hence \eqref{convvj} follows from the weak compactness of the space 
$  W^{1,2}$ and from the Sobolev embedding theorem.

\par
We will obtain the strong convergence in \eqref{strong}, if we show that 
\begin{equation}
\lim_{j\rightarrow \infty} || \nabla v^j ||_{L^2({B_{1/2}})} = || \nabla v^0 ||_{L^2({B_{1/2}})}.
\end{equation}
According to Lemma \ref{easylemma}, \eqref{w12conv}, for any $\zeta \in C^2_0({B_1})$ and for any $j$, the following inequality holds
 \begin{equation} 
    \int_{B_2}  \nabla v^j  \cdot \nabla \left( \zeta^2 v^j \right) dx \leq 0.
     \end{equation}
Hence
\begin{equation}\label{limsup}
\begin{aligned}
\limsup_{j\rightarrow \infty} \int_{B_1} \zeta^2 | \nabla v^j |^2 \leq -\lim_{j\rightarrow \infty} \int_{B_1} v^j \nabla v^j \cdot \nabla (\zeta^2)=
-\int_{B_1} v^0 \nabla v^0 \cdot \nabla (\zeta^2) \\
=\int_{B_1}\zeta^2 |\nabla v^0|^2 -\int_{B_1} \nabla v^0 \cdot \nabla {(\zeta^2v^0)}
= \int_{B_1}\zeta^2|\nabla v^0|^2,
\end{aligned}
\end{equation}
where we used $v^0 \Delta v^0=0$ in the last step.
On the other hand we have that 
\begin{equation} \label{liminf}
\liminf_{j\rightarrow \infty} \int_{B_1} \zeta^2 | \nabla v^j |^2 \geq  \int_{B_1}\zeta^2|\nabla v^0|^2,
\end{equation} 
by the weak convergence $\zeta \nabla v^j  \rightharpoonup \zeta \nabla  v^0 $  in $L^2(B_1)$.
Choosing $ \zeta \in {C_0^2(B_1)}$ such that $ \zeta=1$ in $ B_{1/2}$, we obtain $ || \nabla v^j-\nabla v^0 ||_{L^2(B_{1/2})} \rightarrow 0$.

\end{proof}

\begin{remark}
The strong convergence $ v^j \rightarrow v^0$ in $W^{1,2}(B_{1/2})$ will not be used when proving the uniqueness of double-cone blow-up  limits, but we will need it when discussing the uniqueness of halfspace blow-ups. \par 
If  $ u^j \rightarrow u_0$, where $ u_0$ is a homogeneous global solution, then $v^0 \Delta v^0 =0$ always holds, although in the statement of Lemma \ref{easylemma} we preferred to assume rather than prove it. 
We will prove that $ v^0\Delta v^0=0$ in our later discussion. 
\end{remark}

\begin{definition} \label{minimal}
 Let $ u $ be a solution to the double obstacle problem. We say that $ u_0 = \mu_{\phi_{1},\phi_2}$ is a minimal 
 double-cone solution with respect to $ u$, if
 \begin{equation}
  \Arrowvert u-\mu_{\phi_{1},\phi_2} \rVert_{L^2 (B_1)} \leq 
  \Arrowvert u-\mu_{\phi_{1}+\tau,\phi_2+\delta} \rVert_{L^2 (B_1)},
 \end{equation}
for any $\tau, \delta$, such that $ \lvert \tau \rvert < \pi- \phi_1$ and 
$ \lvert \delta \rvert < \pi- \phi_2$.
\end{definition}

It follows from Definition \ref{minimal}, that if $ \mu_{\phi_{1},\phi_2}$ is a minimal double-cone solution
with respect to $u$, then 
\begin{equation}  \label{orthogmu}
 \int_{\phi_i/2}^{\pi/2+\phi_i/2}\int_0^1 \sin(\phi_i-2\theta )  \left( u(r,\theta )-\mu_{\phi_{1},\phi_2}(r,\theta)\right)r^3 dr 
 d \theta =0, \textrm{ for } i=1,2.
 \end{equation}
We derive equation \eqref{orthogmu}  by taking the partial derivatives of 
\mbox{ $ \Arrowvert u-\mu_{\phi_{1}+\tau,\phi_2+\delta} \rVert_{L^2 (B_1)}$ } at the origin 
 with respect to variables $ \tau$ and $ \delta$.

\begin{proposition} \label{vjvo}
Let $ u^j$ be a sequence of solutions to the double obstacle problem 
with obstacles $ p^1(x)=-x_1^2-x_2^2$ and $ p^1(x)=x_1^2+x_2^2 $
in $ \Omega \subset \mathbb{R}^2$, 
$ B_2 \subset \subset \Omega $. Assume that \eqref{ujconv} holds, where $u_0= \mu $ 
is given by \eqref{u01}. Denote by 
\begin{equation} \label{vj}
 v^j(x):= \frac{u^j(x)-\mu^j(x)}{ \Arrowvert u^j -\mu^j\rVert_{L^2(B_2)} },
\end{equation}
where $ \mu^j$ is a minimal double-cone solution with respect to $ u^j$.
Then \eqref{convvj} holds up to a subsequence, where
 $ v^0 \equiv 0$ in $ \mathcal{C}_1 \cup \mathcal{C}_2$, and $ \Delta v^0 =0$  in 
each of the components of the noncoincidence set $ \Omega_{12}= \mathcal{S}_1 \cup \mathcal{S}_2$, where the cones $ \mathcal{C}_i$ and $\mathcal{S}_i$ correspond to $\mu$.
Furthermore, it follows from the minimality assumption that
\begin{equation} \label{orthogonality} 
\int_0^1  \int_{\phi_i/2}^{\pi/2+\phi_i/2} v^0(r, \theta ) \sin(2\theta-\phi_i )d \theta r^3 dr=0 \textrm{ for  }
i= 1,2.
\end{equation}
\end{proposition}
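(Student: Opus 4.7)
The plan is to apply Lemma \ref{easylemma2} to the pair $(u^j,\mu^j)$ and then extract each asserted property of $v^0$ from the local structure of $\mu$. By minimality of $\mu^j$ on $B_1$, $\|u^j-\mu^j\|_{L^2(B_1)}\le\|u^j-\mu\|_{L^2(B_1)}\to 0$ by \eqref{ujconv}, so $\mu^j\to\mu$ in $L^2(B_1)$. Since the double-cone solutions of \eqref{u01} form the continuous two-parameter family $\{\mu_{\phi_1,\phi_2}\}$ of $2$-homogeneous functions, this forces $(\phi_1^j,\phi_2^j)\to(\phi_1,\phi_2)$ and hence $\mu^j\to\mu$ uniformly on compact subsets of $\mathbb{R}^2$. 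The uniform $C^{1,1}$ bound on $u^j$ inherited from $u$ guarantees, after passing to a subsequence, that $u^j\to\mu$ in $C^{1,\gamma}(B_2)$ as well; in particular $\delta^j\to 0$, and Lemma \ref{easylemma2} yields \eqref{convvj}.

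The most delicate step is showing $v^0\equiv 0$ on $\mathcal{C}_1\cup\mathcal{C}_2$, since a priori the normalisation $\delta^j$ could be so small that $v^j$ blows up in the coincidence region. Fix a compact $K\subset\operatorname{int}(\mathcal{C}_1)$. On $K$ one has $\mu=p^1\le p^2-c$ for some $c>0$, and by the parameter convergence $K\subset\operatorname{int}(\mathcal{C}_1^{\mu^j})$ for $j$ large, so $\mu^j\equiv p^1$ on $K$; uniform convergence $u^j\to\mu$ also gives $u^j<p^2$ on $K$ for $j$ large. Hence $w^j:=u^j-p^1=u^j-\mu^j\ge 0$ reduces to a solution of the single-obstacle equation $\Delta w^j = 4\,\chi_{\{w^j>0\}}$ on $K$. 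The classical non-degeneracy bound $\sup_{B_r(x_0)}w^j\ge c' r^2$ at any point $x_0\in\overline{\{w^j>0\}}\cap K$ with $B_r(x_0)\subset K$, combined with $\sup_K w^j\to 0$, forces $\{w^j>0\}$ to retreat from any $K'\subset\subset K$. Thus $w^j\equiv 0$ on $K'$ for $j$ large, so $v^j\equiv 0$ on $K'$; exhausting $\operatorname{int}(\mathcal{C}_1)$ gives $v^0\equiv 0$ a.e.\ on $\mathcal{C}_1$, and the argument on $\mathcal{C}_2$ is symmetric.

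For harmonicity and orthogonality: on each component $\mathcal{S}_i$ of the noncoincidence set of $\mu$, $\mu$ coincides with a harmonic polynomial strictly between $p^1$ and $p^2$ in the interior. Uniform convergence of both $u^j$ and $\mu^j$ to $\mu$ on compact subsets of $\mathcal{S}_i$ yields $p^1<u^j,\mu^j<p^2$ there for $j$ large, so $\Delta u^j=\Delta\mu^j=0$ and $\Delta v^j=0$; the weak $W^{1,2}$ limit then gives $\Delta v^0=0$ on each $\mathcal{S}_i$. Finally, \eqref{orthogonality} follows by applying \eqref{orthogmu} to $(u^j,\mu^j)$ with parameters $\phi_i^j$, dividing by $\delta^j$, and sending $j\to\infty$: the weights $\chi_{[\phi_i^j/2,\,\pi/2+\phi_i^j/2]}(\theta)\sin(\phi_i^j-2\theta)\,r^3$ are uniformly bounded and converge a.e.\ (hence strongly in $L^2$) to their $\phi_i$-analogues, while $v^j\to v^0$ strongly in $L^2(B_1)$, so the product integral passes to the limit, recalling $\sin(\phi_i-2\theta)=-\sin(2\theta-\phi_i)$.
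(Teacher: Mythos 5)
Your proposal is correct and follows essentially the same route as the paper: minimality plus the triangle inequality to get $\mu^j\to\mu$, the non-degeneracy (maximum growth) lemma for the single obstacle problem to force $u^j\equiv p^i$ (hence $v^j\equiv 0$) on compact subsets of $\mathcal{C}_i$, locally uniform convergence to get $\Delta v^j=0$ on compact subsets of $\mathcal{S}_i$, and passage to the limit in the first-variation identity \eqref{orthogmu} for the orthogonality. Your write-up is in fact slightly more careful than the paper's on two points it leaves implicit, namely that $\mu^j\equiv p^1$ on $K$ for large $j$ (needed to identify $w^j$ with $u^j-\mu^j$ there) and the $B_1$ versus $B_2$ bookkeeping in the normalisation.
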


\begin{proof}

\par
It follows from the minimality assumption  and from the triangle inequality, that $\mu^j\rightarrow \mu$;
\begin{equation*}
 \Arrowvert \mu^j-\mu \rVert_{L^2(B_1)} \leq  \Arrowvert u^j -\mu^j \rVert_{L^2(B_1)}  + \Arrowvert u^j -\mu \rVert_{L^2(B_1)} \leq 2  \Arrowvert u^j -\mu \rVert_{L^2(B_1)} 
 \rightarrow 0.
 \end{equation*}

\par
We  show that for any  $ K \subset \subset \mathcal{C}_1 \cap B_1 $, the functions $ v^j$ vanish
in $ K$ for large 
$ j$, since then we may conclude that $ v^0 \equiv 0$ in $ \mathcal{C}_1$. Let 
$ K\subset \subset V \subset \subset \mathcal{C}_1$, 
and $ d:=\dist(K, \partial V)$.
 It follows from \eqref{ujconv} that for any $ \varepsilon>0$ there exists $ j(\varepsilon)$, such that 
$ \lvert u^j(x)- p^1(x) \rvert \leq \varepsilon $, for any $ x \in K $, provided $ j\geq j(\varepsilon)$ is large enough,
depending only on $ \varepsilon$. Take $ 0<\varepsilon < \frac{d^2}{4}$. 
Let us denote by $ w^j := u^j- p^1$, then $ 0 \leq w^j \leq \varepsilon $
solves the following normalized obstacle problem with zero obstacle
\begin{equation*}
 \Delta w^j= -\lambda_1 \chi_{\{w^j >0\}}= 4\chi_{\{w^j >0\}} \textrm{ in } K.
 \end{equation*}
Fix $ x_0 \in K$, if $ w_j (x_0) >0$, then we can apply the maximum growth lemma (Lemma 5 in \cite{Caf98}) for the solution to the 
classical obstacle problem, and obtain
\begin{equation}
  \frac{d^2}{4} > \varepsilon \geq \sup_{B_d(x_0)} w^j \geq \frac{d^2}{2} ,
\end{equation}
which is not possible, therefore $ w^j (x_0)=0$. Hence we may conclude that for $ j>>1$ large, $ v^j$ is vanishing
 in $ K$, for any $ K \subset \subset \mathcal{C}_1\cap B_1$.
 
 \par
 Let $U \subset \subset  \mathcal{S}_1 \cap B_1$ be any open set, then $ p^1< u^j <p^2$
and  $ p^1< \mu^j <p^2$ for large $j$, hence $ \Delta v^j=0$, and after passing to the limit as $ j\rightarrow \infty$, we obtain 
$ \Delta v^0=0$ in $ \mathcal{S}_1 \cap B_1$. Hence $ v^0$ is a harmonic function outside its support, and $ v^j \rightarrow v^0 $ in $W^{1,2}(B_{1/2})$.

\par
We obtain \eqref{orthogonality} by passing to the limit as $ j\rightarrow \infty$ in equation \eqref{orthogmu} 
applied for the solutions $u^j$.

\end{proof}

\begin{lemma} \label{lemmav0}
 Let $ v^0 $ be the function in Proposition \ref{vjvo}, then 
 \begin{equation} \label{v0s}
  \lVert v^0(sx) \rVert_{L^2(B_{1})} \leq s^4  \lVert v^0 \rVert_{L^2(B_{1})},
 \end{equation}
for any $ 0<s<1$.
\end{lemma}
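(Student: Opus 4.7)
The plan is to reduce the estimate to a Fourier expansion of $v^0$ on each of the two open sectors $\mathcal{S}_1, \mathcal{S}_2$ (each of opening angle $\pi/2$ by Case 1 of Theorem \ref{theorem1}). Since $v^0 \equiv 0$ on $\mathcal{C}_1 \cup \mathcal{C}_2$, the $L^2$-norms split as
\begin{equation*}
\lVert v^0(s \cdot) \rVert_{L^2(B_1)}^2 = \sum_{i=1}^{2} \lVert v^0(s \cdot) \rVert_{L^2(B_1 \cap \mathcal{S}_i)}^2,
\end{equation*}
and analogously for the norm of $v^0$, so it suffices to prove the bound in each sector separately.

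First I would fix $i=1$ and introduce the local angular coordinate $\theta' = \theta - \phi_1/2 \in [0,\pi/2]$ on $\mathcal{S}_1$. Since $v^0$ lies in $W^{1,2}(B_1)$, is harmonic in $\mathcal{S}_1 \cap B_1$, and vanishes on the radial edges of the sector (by the a.e.\ vanishing of $v^0$ on the adjacent coincidence sets $\mathcal{C}_1, \mathcal{C}_2$ together with the standard trace characterisation of $W^{1,2}$ functions that vanish on an open set), separation of variables with Dirichlet data on $\theta' = 0, \pi/2$ gives the expansion
\begin{equation*}
v^0(r,\theta') = \sum_{k=1}^{\infty} a_k\, r^{2k} \sin(2k\theta'),
\end{equation*}
where the negative powers of $r$ are excluded by $L^2$-integrability near the origin.

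Next I would invoke the orthogonality condition \eqref{orthogonality}. In the new coordinate it reads $\int_0^1 \int_0^{\pi/2} v^0(r,\theta') \sin(2\theta')\,d\theta'\,r^3\,dr = 0$, and inserting the expansion, the angular integrals $\int_0^{\pi/2} \sin(2k\theta')\sin(2\theta')\,d\theta'$ vanish for $k\neq 1$ and equal $\pi/4$ for $k=1$, forcing $a_1 = 0$. Hence the expansion of $v^0$ on $\mathcal{S}_1$ starts at $k=2$, i.e.\ at the mode $r^4 \sin(4\theta')$. Parseval then gives
\begin{equation*}
\lVert v^0(s\cdot)\rVert_{L^2(B_1\cap \mathcal{S}_1)}^2 = \sum_{k\geq 2} a_k^2\, s^{4k}\, \frac{\pi}{4(4k+2)} \leq s^8 \sum_{k\geq 2} a_k^2\, \frac{\pi}{4(4k+2)} = s^8\, \lVert v^0\rVert_{L^2(B_1\cap \mathcal{S}_1)}^2,
\end{equation*}
since $s^{4k}\leq s^8$ for all $k\geq 2$ and $0<s<1$. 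Repeating verbatim on $\mathcal{S}_2$ and summing yields $\lVert v^0(sx)\rVert_{L^2(B_1)}^2 \leq s^8 \lVert v^0 \rVert_{L^2(B_1)}^2$, which is the claim after taking square roots.

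The main subtlety I expect is the justification that the sectorial Dirichlet expansion is applicable to $v^0$. This requires confirming (i) that harmonicity on the open sector together with $W^{1,2}$-regularity up to the vertex $0$ and the outer circle $\partial B_1$ allow the above $L^2$-orthogonal decomposition, and (ii) that the trace of $v^0$ vanishes on the radial edges $\partial \mathcal{S}_i \cap B_1$, which follows from the fact that $v^0 \equiv 0$ on the open neighbouring coincidence sets $\mathcal{C}_1^\circ, \mathcal{C}_2^\circ$ (established in Proposition \ref{vjvo}). Once these points are in hand, the computation above is routine and the decay factor $s^4$ is precisely the exponent of the first non-trivial Dirichlet eigenmode $r^4\sin(4\theta')$ in the $\pi/2$-sector that survives the orthogonality condition.
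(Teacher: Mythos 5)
Your proof is correct and follows essentially the same route as the paper's: a sectorial Dirichlet eigenfunction expansion on each $\pi/2$-sector, the minimality-induced orthogonality \eqref{orthogonality} to kill the $k=1$ mode $r^2\sin(2\theta')$, and Parseval to extract the decay $s^4$ from the first surviving mode $r^4\sin(4\theta')$. The only (cosmetic) difference is that you work in the shifted angular variable $\theta'=\theta-\phi_1/2$, which collapses the paper's pair of coefficients $A_k,B_k$ subject to the linear constraint \eqref{bdc} into a single coefficient $a_k$ and slightly shortens the bookkeeping.
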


\begin{proof}
According to Proposition \ref{vjvo},  $ v^0$ is a harmonic function
in the sector
\begin{equation*}
\mathcal{S}_1 \cap B_1= \{ \phi_1 \leq 2\theta \leq \phi_1+ \pi \} \cap B_{1},
\end{equation*}
and $v^0$ satisfies
the following boundary 
conditions in the trace sense;
\begin{equation} \label{zerobd}
 v^0(r,\phi_1/2)= v^0(r, \phi_1/2+\pi/2)=0, \textrm{ for all } 0<r<1.
\end{equation}
Therefore
\begin{equation} \label{sum}
 v^0(r,\theta )= \sum_{k=1}^{\infty} r^{2k} \left( A_k \cos(2k\theta)+B_k \sin(2k\theta) \right)~ \textrm{ in }~
 \mathcal{S}_1 \cap B_1,
\end{equation}
and according to \eqref{zerobd}, we have that
\begin{equation} \label{bdc}
A_k \cos k\phi_1 + B_k \sin k\phi_1=0, \textrm{ for } k= 1,2,... .
\end{equation}
We claim that \eqref{bdc} implies 
\begin{equation} \label{kthterm}
 \int_{\phi_1/2}^{\pi/2+\phi_1/2} 
 \left( A_k \cos(2k\theta)+B_k \sin(2k\theta) \right)  \sin(2\theta-\phi_1 )d \theta =0, \textrm{ for all }
 k =2,3, ... .
\end{equation}
The proof of \eqref{kthterm} is straightforward. Fix $ k \geq 2$, and assume that $ \sin k\phi_1 \neq 0$, 
then 
\begin{equation*}
B_k= - A_k \frac{\cos k\phi_1}{\sin k\phi_1},
\end{equation*}
hence we obtain
\begin{align*}
 \int_{\phi_1/2}^{\pi/2+\phi_1/2} 
 \left( A_k \cos(2k\theta)+B_k \sin(2k\theta) \right)  \sin(2\theta-\phi_1 )d \theta \\ 
 =  \frac{A_k}{\sin k\phi_1} \int_{\phi_1/2}^{\pi/2+\phi_1/2} 
 \left(  \cos(2k\theta) \sin k\phi_1 -\cos k\phi_1  \sin(2k\theta) \right)  \sin(2\theta-\phi_1 )d \theta \\
 =\frac{-A_k}{\sin k\phi_1} \int_{\phi_1/2}^{\pi/2+\phi_1/2} 
\sin( k(2\theta-\phi_1)) \sin(2\theta-\phi_1 )d \theta = \frac{-A_k}{2 \sin k\phi_1} 
\int_0^\pi \sin kt \sin t dt =0, ~ \textrm{ if } k \neq 1.
\end{align*}
If $ \sin k\phi_1 = 0$, then $ \cos k\phi_1 \neq 0$, and the proof of \eqref{kthterm} works similarly.

\par
In the next step we show that $A_1= B_1=0$. 
By using the orthogonality property \eqref{orthogonality} and \eqref{kthterm}, and employing elementary 
trigonometric identities, we obtain
\begin{align*}
 0= \int_0^1  \int_{\phi_1/2}^{\pi/2+\phi_1/2} v^0(r, \theta ) \sin(2\theta-\phi_1 )d \theta r^3 dr \\
 = \int_0^1  \int_{\phi_1/2}^{\pi/2+\phi_1/2}\sum_{k=1}^{\infty} r^{2k+3}
 \left( A_k \cos(2k\theta)+B_k \sin(2k\theta) \right)  \sin(2\theta-\phi_1 )d \theta  dr \\
 = \sum_{k=1}^{\infty} \frac{1}{2(k+2)}\int_{\phi_1/2}^{\pi/2+\phi_1/2} 
 \left( A_k \cos(2k\theta)+B_k \sin(2k\theta) \right)  \sin(2\theta-\phi_1 )d \theta \\
 =\frac{1}{4} \int_{\phi_1/2}^{\pi/2+\phi_1/2} 
 \left( A_1 \cos(2\theta)+B_1 \sin(2\theta) \right)  \sin(2\theta-\phi_1 )d \theta \\
 =\frac{1}{4}\int_{\phi_1/2}^{\pi/2+\phi_1/2} 
 -A_1 \sin \phi_1 \cos^2 2\theta +B_1 \cos \phi_1 \sin^2 2 \theta d \theta =
 \frac{\pi}{16} ( -A_1 \sin \phi_1 +B_1 \cos \phi_1).
\end{align*}
Hence
\begin{equation} \label{a1b1}
 -A_1 \sin \phi_1 +B_1 \cos \phi_1=0.
\end{equation}
On the other hand
\begin{equation} \label{b1a1}
 A_1 \cos  \phi_1+ B_1 \sin \phi_1=0,
\end{equation}
which is \eqref{bdc} for $ k=1$. It is easy to see that  \eqref{a1b1} together with \eqref{b1a1} imply
$ A_1= B_1=0$. By \eqref{sum},
\begin{equation*}
 v^0(sx)=v^0(sr, \theta)=s^4 \sum_{k=2}^{+\infty}s^{2(k-2)}r^{2k}( A_k \cos(2k\theta)+B_k \sin(2k\theta))
 \textrm{  in } \mathcal{S}_1 \cap B_1.
\end{equation*}
Hence we obtain
\begin{equation} \label{S1}
  \lVert v^0(sx) \rVert_{L^2(\mathcal{S}_1 \cap B_{1})} \leq s^4  \lVert v^0 \rVert_{L^2(\mathcal{S}_1 \cap B_{1})}.
\end{equation}
Analogously, 
\begin{equation} \label{S2}
  \lVert v^0(sx) \rVert_{L^2(\mathcal{S}_2 \cap B_{1})} \leq s^4  \lVert v^0 \rVert_{L^2(\mathcal{S}_2 \cap B_{1})}.
\end{equation}
According to Proposition \ref{vjvo}, $ v_0 \equiv 0$ in $B_1 \setminus (\mathcal{S}_1 \cup \mathcal{S}_2)$, hence
\begin{equation*}
   \lVert v^0(sx) \rVert_{L^2( B_{1})} = \lVert v^0(sx) \rVert_{L^2(\mathcal{S}_1 \cap B_{1})}+ 
   \lVert v^0(sx) \rVert_{L^2(\mathcal{S}_2 \cap B_{1})}.
\end{equation*}
The desired inequality, \eqref{v0s}, now follows from \eqref{S1} and \eqref{S2}.
 
\end{proof}

\begin{corollary} \label{key}
 For any $ \varepsilon>0$ and $0< s<1 $, there exists $ \delta = \delta(\varepsilon, s )$ such that 
 if
 \begin{equation*}
 \Arrowvert u -\mu_0 \rVert_{L^2 (B_2)}\leq \delta,
 \end{equation*}
 then 
 \begin{equation}
\lVert u_s -\mu_0 \rVert _{L^2 (B_{1})} \leq 
 s \Arrowvert u -\mu_0 \rVert_{L^2 (B_{1})}  +\varepsilon  \Arrowvert u -\mu_0 \rVert_{L^2 (B_2)},
 \end{equation}
where $ \mu_0$ is a minimal double-cone solution with respect to $ u$.
\end{corollary}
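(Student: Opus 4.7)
The plan is to argue by contradiction via a blow-up/compactness scheme, ultimately invoking the decay estimate of Lemma~\ref{lemmav0}.

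Suppose the conclusion fails. Then there exist $\varepsilon>0$, $s\in(0,1)$, a sequence of solutions $u^j$ to the double obstacle problem in $B_2$ with obstacles as in \eqref{p1=-p2}, and minimal double-cones $\mu^j$ (in the sense of Definition~\ref{minimal}) such that
\begin{equation*}
\delta^j := \|u^j-\mu^j\|_{L^2(B_2)}\to 0,\qquad \|u^j_s-\mu^j\|_{L^2(B_1)} > s\,\|u^j-\mu^j\|_{L^2(B_1)}+\varepsilon\delta^j.
\end{equation*}
Since each $\mu^j=\mu_{\phi_1^j,\phi_2^j}$ is parametrised by $(\phi_1^j,\phi_2^j)\in[0,\pi]^2$, a subsequence yields $\mu^j\to\mu_0$ uniformly on compact sets. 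Interior $C^{1,1}$ estimates for the double obstacle problem, together with the $L^\infty$ bound produced by $u^j-\mu^j\to 0$ and the boundedness of $\mu^j$, make $\{u^j\}$ uniformly $C^{1,1}$ on $B_{3/2}$, so Arzela--Ascoli delivers $u^j\to\mu_0$ in $C^{1,\gamma}(B_1)$, meeting the hypothesis of Proposition~\ref{vjvo}.

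Introduce $v^j := (u^j-\mu^j)/\delta^j$, so $\|v^j\|_{L^2(B_2)}=1$. By Lemma~\ref{easylemma2}, up to a subsequence $v^j\rightharpoonup v^0$ weakly in $W^{1,2}(B_1)$ and strongly in $L^2(B_1)$. Proposition~\ref{vjvo} then identifies $v^0$: it vanishes on the coincidence cones $\mathcal{C}_1\cup\mathcal{C}_2$ of $\mu_0$, is harmonic in the sectors $\mathcal{S}_1,\mathcal{S}_2$, and inherits the orthogonality~\eqref{orthogonality} from the minimality of $\mu^j$ with respect to $u^j$. The crucial computation is the following scaling identity: because $\mu^j$ is homogeneous of degree two, $\mu^j(sx)=s^2\mu^j(x)$, hence
\begin{equation*}
u^j_s(x)-\mu^j(x) = \frac{u^j(sx)-\mu^j(sx)}{s^2} = \frac{\delta^j}{s^2}\,v^j(sx),
\end{equation*}
and a change of variables in $n=2$ gives $\|v^j(s\,\cdot)\|_{L^2(B_1)}=s^{-1}\|v^j\|_{L^2(B_s)}$, so that $\|u^j_s-\mu^j\|_{L^2(B_1)} = (\delta^j/s^3)\,\|v^j\|_{L^2(B_s)}$.

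Dividing the contradiction hypothesis by $\delta^j$ and letting $j\to\infty$ using strong $L^2(B_1)$ convergence, we obtain $s^{-3}\|v^0\|_{L^2(B_s)}\geq s\,\|v^0\|_{L^2(B_1)}+\varepsilon$. However, Lemma~\ref{lemmav0} combined with the same change of variables yields $\|v^0\|_{L^2(B_s)}\leq s^5\|v^0\|_{L^2(B_1)}$, and therefore
\begin{equation*}
s^{-3}\|v^0\|_{L^2(B_s)}\leq s^2\|v^0\|_{L^2(B_1)}\leq s\,\|v^0\|_{L^2(B_1)}
\end{equation*}
since $s\leq 1$. This forces $\varepsilon\leq 0$, the desired contradiction. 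The main obstacle is the compactness/convergence step: extracting a subsequence along which the minimal double-cones $\mu^j$ converge to some double-cone $\mu_0$, promoting $L^2$ convergence of $u^j$ to the $C^{1,\gamma}(B_1)$ topology required by Proposition~\ref{vjvo}, and verifying that the orthogonality condition from Definition~\ref{minimal} passes to the limit so that Lemma~\ref{lemmav0} is applicable to $v^0$.
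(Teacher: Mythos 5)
Your argument is correct and follows essentially the same route as the paper: a contradiction/compactness scheme normalising $v^j=(u^j-\mu^j)/\delta^j$, passing to the limit via Lemma~\ref{easylemma2} and Proposition~\ref{vjvo}, and contradicting the decay of Lemma~\ref{lemmav0}. Your version is in fact slightly more careful than the paper's, since you explicitly extract a convergent subsequence of the minimal double-cones $\mu^j$, verify the $C^{1,\gamma}$ convergence hypothesis of Proposition~\ref{vjvo}, and write out the scaling identities relating $\lVert u^j_s-\mu^j\rVert_{L^2(B_1)}$ to $\lVert v^j\rVert_{L^2(B_s)}$, all of which the paper leaves implicit.
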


\begin{proof}
We argue by contradiction. Let $u^j $ be a sequence of solutions to the double obstacle problem and assume that 
\begin{equation*}
 \Arrowvert u^j -\mu^j \rVert_{L^2 (B_1)}:= \delta_j \rightarrow 0, 
\end{equation*}
but there exist $ 0 <s  <1$ and $ \varepsilon> 0$, such that 
 \begin{equation} \label{assumption}
\lVert u^j_s -\mu^j \rVert _{L^2 (B_{1})} >  
 s \Arrowvert u^j -\mu^j \rVert_{L^2 (B_{1})}  +\varepsilon \delta_j.
 \end{equation}
Let $v^j$ be the sequence defined by \eqref{vj}, then by \eqref{assumption}
\begin{equation} \label{contr}
 \Arrowvert v^j_s \rVert_{L^2(B_{1})} > s \Arrowvert v^j \rVert_{L^2(B_{1})} \textrm{ and }
 \Arrowvert v^j\rVert_{L^2(B_{1})} > \varepsilon.
\end{equation}
Applying Proposition \ref{vjvo}, we may pass to the limit in \eqref{contr} as $j \rightarrow \infty$, and obtain
\begin{equation*} 
 \Arrowvert v^0_s \rVert_{L^2(B_{1})} \geq s \Arrowvert v^0 \rVert_{L^2(B_{1})} \textrm{ and }
 \Arrowvert v^0 \rVert_{L^2(B_{1})} \geq  \varepsilon,
\end{equation*}
hence 
\begin{equation*} 
 \Arrowvert v^0_s \rVert_{L^2(B_{1})} \geq s \Arrowvert v^0 \rVert_{L^2(B_{1})} >
 s^2 \Arrowvert v^0 \rVert_{L^2(B_{1})},
\end{equation*}
and we derive a contradiction to Lemma \ref{lemmav0}.

\end{proof}

Assume that $ u_0 =\mu$ given by \eqref{u01}
 is a blow-up for $ u$ at the origin, that is \eqref{uj} and \eqref{ujconv} hold for a sequence 
 $ r_j \rightarrow 0$. We want to show that the blow-up of $ u $ at the origin is unique; 
 \begin{equation*}
 u_r= \frac{ u(rx)}{r^2} \rightarrow u_0(x),~ \textrm{as } ~ r \rightarrow 0,
 \end{equation*}
and describe the rate of convergence $u_r \rightarrow u_0$ as $ r\rightarrow 0+$.

\begin{proposition} \label{important}
 Let $ u$ be the solution to the double obstacle problem in
 $ \Omega$, $ B_2 \subset\subset \Omega  \subset \mathbb{R}^2 $, with obstacles
 $ p^1(x)=-x_1^2-x_2^2$ and $ p^2(x)= x_1^2+ x_2^2$. Assume that $ \Arrowvert u-\mu \rVert_{L^2(B_2)}= \delta  $
 is small,
 where $ \mu$ is a double-cone solution, then there exists a double-cone solution $ u_0$,
 such that $ u_r \rightarrow u_0$.
Furthermore, for any $ 0< \gamma <1$,
\begin{equation} \label{estimate}
 \Arrowvert u_r - u_0 \rVert_{L^2(B_{1})} \leq C_n r^\gamma \Arrowvert u-\mu \rVert_{L^2(B_2)},
\end{equation}
 provided $ \delta > 0 $ is small depending on $ \gamma$.
\end{proposition}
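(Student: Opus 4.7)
The plan is to iterate Corollary \ref{key} at the geometric scales $r_k := s^k$ for a fixed $s \in (0, 1/2)$. For each $k$ let $\mu_k$ denote a minimal double-cone solution with respect to $u_{r_k}$ in the sense of Definition \ref{minimal}, and set
\begin{equation*}
c_k := \|u_{r_k} - \mu_k\|_{L^2(B_1)}, \qquad D_k := \|u_{r_k} - \mu_k\|_{L^2(B_2)}.
\end{equation*}
Since $\mu_k$ is homogeneous of degree two, $(\mu_k)_s = \mu_k$ and $(u_{r_k})_s = u_{r_{k+1}}$, so applying Corollary \ref{key} to $u_{r_k}$ and then invoking the minimality of $\mu_{k+1}$ gives
\begin{equation*}
c_{k+1} \leq s\, c_k + \varepsilon\, D_k
\end{equation*}
for any prescribed $\varepsilon > 0$, provided $D_k \leq \delta(\varepsilon, s)$.

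The core of the argument is to couple the above with a matching estimate on $D_{k+1}$. Using the scaling identity $\|u_{r_{k+1}} - \mu_k\|_{L^2(B_2)} = r_{k+1}^{-3}\|u - \mu_k\|_{L^2(B_{2r_{k+1}})}$ and the inclusion $B_{2r_{k+1}} \subset B_{r_k}$ (which is where $s \leq 1/2$ is used) one gets $\|u_{r_{k+1}} - \mu_k\|_{L^2(B_2)} \leq s^{-3} c_k$. Combining this with the triangle inequality, the first recursion, and the $2$-homogeneity scaling $\|\mu_k - \mu_{k+1}\|_{L^2(B_2)} = 8 \|\mu_k - \mu_{k+1}\|_{L^2(B_1)}$ produces the coupled bound
\begin{equation*}
D_{k+1} \leq (s^{-3} + 16 s)\, c_k + 16\varepsilon\, D_k.
\end{equation*}
Thus $(c_k, D_k)^T$ satisfies a linear recursion with nonnegative matrix, and a direct eigenvalue computation shows its spectral radius is $\rho = s + \varepsilon s^{-4} + O(\varepsilon^2)$ to leading order in $\varepsilon$. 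Given $\gamma \in (0, 1)$, first pick $s$ small with $s \leq 1/2$, then $\varepsilon = \varepsilon(s, \gamma)$ so small that $\rho \leq s^\gamma$, and finally $\delta = \delta(\varepsilon, s)$ from Corollary \ref{key}. For $\|u - \mu\|_{L^2(B_2)} \leq \delta$ the corollary's hypothesis is preserved along the iteration and one obtains $c_k + D_k \leq C\, s^{k\gamma}\|u - \mu\|_{L^2(B_2)}$.

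From the geometric decay of $c_k, D_k$, the bound $\|\mu_{k+1} - \mu_k\|_{L^2(B_1)} \leq 2 s c_k + 2 \varepsilon D_k$ is summable, so $\{\mu_k\}$ is Cauchy in $L^2(B_1)$. By the finite-dimensional parametrization of double-cone solutions in Theorem \ref{theorem1}, $\mu_k$ converges to a (possibly degenerate) double-cone global solution $u_0$ with $\|u_{r_k} - u_0\|_{L^2(B_1)} \leq C\, s^{k\gamma}\|u - \mu\|_{L^2(B_2)}$. For an arbitrary $r \in (0, 1)$, picking the unique $k$ with $s^{k+1} < r \leq s^k$ and using the uniform $C^{1,1}$ bound on $u$ to control $\|u_r - u_{r_k}\|_{L^2(B_1)}$ yields the desired estimate \eqref{estimate}.

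The principal obstacle is the coupled estimate for $D_{k+1}$: the corollary only improves the $B_1$-error, and a naive scaling bound gives $D_{k+1} \leq s^{-3} D_k$, which would not contract. The gain comes from replacing $D_k$ by $c_k$ in the leading term, which is legitimate because although $\mu_k$ is only the best double-cone approximation to $u_{r_k}$ on $B_1$, the inclusion $B_{2r_{k+1}} \subset B_{r_k}$ still makes it an effective approximation of $u$ on the smaller physical ball $B_{2r_{k+1}}$.
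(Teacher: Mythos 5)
Your argument is, in substance, the paper's own proof: iterate Corollary \ref{key} at the scales $s^k$ with minimal double-cone approximants $\mu_k$, use the inclusion $B_{2s^{k+1}}\subset B_{s^k}$ together with the $2$-homogeneity scaling $\|\mu\|_{L^2(B_2)}=8\|\mu\|_{L^2(B_1)}$ to convert the $B_2$-error at scale $k+1$ into the $B_1$-error at scale $k$, and then run a Cauchy argument on $\{\mu_k\}$. The paper carries this out as a two-term induction hypothesis ($\|u_{s^{k+1}}-\mu^k\|_{L^2(B_1)}\leq\tau^{k+1}\delta$ and $\|\mu^{k+1}-\mu^k\|_{L^2(B_1)}\leq 2\tau^{k+1}\delta$ with $\tau=s^\gamma$), which is exactly your $2\times 2$ matrix recursion written out by hand; your spectral-radius bookkeeping (modulo the dropped $16\varepsilon$ in the trace, which is immaterial) and the order of choices $s$, then $\varepsilon$, then $\delta$ match the paper's $\varepsilon<\tau(\tau-s)/(16+8\tau)$. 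The point you single out as the ``principal obstacle'' --- replacing $D_k$ by $c_k$ in the leading term of the $D_{k+1}$ estimate --- is indeed the crux, and your justification is correct.

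The one step that does not work as written is the final passage from the dyadic scales to arbitrary $r$. The uniform $C^{1,1}$ bound on $u$ gives only $\bigl|\tfrac{d}{dr}u_r(x)\bigr|\leq C|x|^2/r$, hence $\|u_r-u_{s^k}\|_{L^2(B_1)}\leq C\ln(1/s)$ for $s^{k+1}<r\leq s^k$: a bounded quantity, not one of size $r^\gamma\delta$, so this cannot close the estimate \eqref{estimate}. The correct (and standard) fix, which the paper uses, is the monotone scaling of the $L^2$ norm against the homogeneous limit: since $B_r\subset B_{s^k}$ and $u_0$ is homogeneous of degree two,
\begin{equation*}
\lVert u_r-u_0\rVert_{L^2(B_1)}=r^{-3}\lVert u-u_0\rVert_{L^2(B_r)}\leq \left(\frac{s^k}{r}\right)^{3}\lVert u_{s^k}-u_0\rVert_{L^2(B_1)}\leq s^{-3}\lVert u_{s^k}-u_0\rVert_{L^2(B_1)},
\end{equation*}
after which the geometric decay along $s^k$ transfers to all $r$ at the cost of the fixed constant $s^{-3}$. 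With that replacement your proof is complete and coincides with the paper's.
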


\begin{proof}
Let $ \frac{1}{4}\leq s <  \frac{1}{2}$ be a fixed number, and $ \tau:=s^\gamma >s$.
We use an induction argument to show that for  $ \delta>0$ small enough
\begin{equation} \label{induction}
 \Arrowvert u_{s^{k+1}} -\mu^k \rVert_{L^2(B_1)} \leq  \tau^{k+1} \delta, \textrm{ and }
\Arrowvert \mu^{k+1} -\mu^k \rVert_{L^2(B_1)} \leq 2 \tau^{k+1} \delta, ~~k=0,1,2,3,...
\end{equation}
 where by definition $ \mu^k$  is a 
minimal double-cone solution with respect to $ u_{s^k}$.

\par
Let us show that \eqref{induction} is true for $ k=0$. First we observe that by 
the triangle inequality and
the minimality assumption,
\begin{equation} \label{trmin}
 \Arrowvert \mu-\mu^0  \rVert_{L^2(B_1)} \leq  \Arrowvert u-\mu  \rVert_{L^2(B_1)} +
  \Arrowvert u-\mu^0  \rVert_{L^2(B_1)} \leq 2 \Arrowvert u-\mu  \rVert_{L^2(B_1)}.
\end{equation}
Note that since $ \mu^k$  are homogeneous of degree two functions, the following relation 
is true
\begin{equation} \label{muk8}
 \Arrowvert \mu^{k+1}-\mu^k\rVert_{L^2(B_2)} = 8  \Arrowvert \mu^{k+1}-\mu^k\rVert_{L^2(B_1)}, \textrm{ for all }
 k=0, 1,2,... .
\end{equation}
Now let us proceed to the proof of \eqref{induction} for $k=0$.
 According to Corollary \ref{key} and \eqref{trmin},
\begin{equation*}
\begin{aligned}
 \lVert u_s -\mu_0 \rVert _{L^2 (B_{1})} \leq 
 s \Arrowvert u -\mu^0 \rVert_{L^2 (B_{1})}  +\varepsilon  \Arrowvert u -\mu^0 \rVert_{L^2 (B_2)} 
 \leq  s \Arrowvert u -\mu \rVert_{L^2 (B_{1})} \\
 +\varepsilon  \Arrowvert u -\mu \rVert_{L^2 (B_2)}
 +\varepsilon  \Arrowvert \mu -\mu^0 \rVert_{L^2 (B_2)}
 \leq  s \Arrowvert u -\mu \rVert_{L^2 (B_{1})} +\varepsilon  \Arrowvert u -\mu \rVert_{L^2 (B_2)} \\
 +16 \varepsilon  \Arrowvert u -\mu \rVert_{L^2 (B_1)} \leq 
 (s + 17\varepsilon) \Arrowvert u -\mu \rVert_{L^2 (B_2)} \leq \tau  \Arrowvert u -\mu \rVert_{L^2 (B_2)},
 \end{aligned}
\end{equation*}
where we take $0< \varepsilon < \frac{\tau- s}{17}$. Thus we obtain 
$\lVert u_s -\mu^0 \rVert _{L^2 (B_{1})} \leq \tau \Arrowvert u -\mu \rVert_{L^2 (B_2)} $, hence 
\begin{equation*}
 \rVert \mu^1- \mu^0\rVert_{L^2(B_1)} \leq \rVert u_s- \mu^1 \rVert_{L^2(B_1)}+
 \rVert u_s- \mu^0\rVert_{L^2(B_1)} \leq 2 \lVert u_s -\mu^0 \rVert _{L^2 (B_{1})} \leq 2 \tau
 \Arrowvert u -\mu \rVert_{L^2 (B_2)},
\end{equation*}
which completes the proof of \eqref{induction} for $k=0$.

\par
Let us assume \eqref{induction} holds up to and including $ k$, we will show that 
\eqref{induction} holds for $ k+1$.
First note that 
$\Arrowvert u_{s^{k+1}} -\mu^{k+1} \rVert_{L^2(B_2)} $ is small. 
Indeed, since $1/4 < s< 1/2$, we obtain 
\begin{equation} \label{k+1mu}
\begin{aligned}
 \Arrowvert u_{s^{k+1}} -\mu^{k} \rVert_{L^2(B_2)} =\frac{1}{s^2} 
 \Arrowvert u_{s^{k}} -\mu^{k} \rVert_{L^2(B_{2s})} \leq 16 \Arrowvert u_{s^{k}} -\mu^{k} \rVert_{L^2(B_{1})} \\
 \leq 
 16 \Arrowvert u_{s^{k}} -\mu^{k-1} \rVert_{L^2(B_{1})} \leq 16 \tau^{k} \delta \leq  16 \delta
 \end{aligned}
\end{equation}
by the induction assumption.
According to Corollary \ref{key} for any $ \varepsilon> 0$, we can choose $ 16 \delta > 0$ to be small
depending on $ \varepsilon $ and $s$, and obtain
\begin{equation*}
\begin{aligned}
 \Arrowvert u_{s^{k+2}} - \mu^{k+1}\rVert_{L^2(B_1)} \leq 
 s  \Arrowvert u_{s^{k+1}} - \mu^{k+1}\rVert_{L^2(B_1)}+ 
 \varepsilon \Arrowvert u_{s^{k+1}} - \mu^{k+1}\rVert_{L^2(B_2)} \\
 \leq 
  s  \Arrowvert u_{s^{k+1}} - \mu^{k}\rVert_{L^2(B_1)} + 
   \varepsilon \Arrowvert u_{s^{k+1}} - \mu^{k}\rVert_{L^2(B_2)} + 
  \varepsilon \Arrowvert \mu^{k+1} - \mu^{k}\rVert_{L^2(B_2)} \\
  \leq s  \Arrowvert u_{s^{k+1}} - \mu^{k}\rVert_{L^2(B_1)} +
   16 \varepsilon \Arrowvert u_{s^{k}} - \mu^{k-1}\rVert_{L^2(B_1)}+
   8 \varepsilon \Arrowvert \mu^{k+1} - \mu^{k}\rVert_{L^2(B_1)}, 
 \end{aligned}
\end{equation*}
where we used \eqref{k+1mu}  and  \eqref{muk8} in the last step. Recalling our induction assumption, we obtain
\begin{equation} \label{musk}
 \Arrowvert u_{s^{k+2}} - \mu^{k+1}\rVert_{L^2(B_1)} 
   \leq ( s \tau^{k+1}  +16 \varepsilon \tau^{k} + 8 \varepsilon \tau^{k+1} )\delta \leq \tau^{k+2} \delta, 
\end{equation}
by choosing $ \varepsilon < \frac{\tau(\tau-s)}{16+ 8 \tau}$.
 It follows from the triangle inequality and
the definition of minimal double-cone solutions  that
\begin{align*}
\Arrowvert \mu^{k+2} -\mu^{k+1} \rVert_{L^2(B_1)} \leq
\Arrowvert u_{s^{k+2}}-\mu^{k+2}\rVert_{L^2(B_1)}+\Arrowvert u_{s^{k+2}}-\mu ^{k+1}\rVert_{L^2(B_1)} \\
\leq
2 \Arrowvert u_{s^{k+2}}-\mu ^{k+1}\rVert_{L^2(B_1)}
\leq 2 \tau^{k+2} \delta.
\end{align*}
 The proof of the inequalities
\eqref{induction} is therefore complete.

\par
Now we are ready to show that $ \mu^k $ is a Cauchy sequence, and therefore converges.
For any $ m, k \in \mathbb{N}$
\begin{equation*}
 \Arrowvert \mu^{k+m}- \mu^k \rVert_{L^2(B_1) } \leq
 \sum_{l=k}^{k+m-1}  \Arrowvert \mu^{l+1}- \mu^l \rVert_{L^2(B_1) } \leq 
 \sum_{l=k}^{k+m-1} \tau^{l+1} \delta \leq \frac{\tau^{k+1}}{1-\tau} \delta \rightarrow 0,
\end{equation*}
independent of $ m$. Hence there exists $ u_0$, such that $ \mu^k \rightarrow u_0$, furthermore
\begin{equation} \label{mu0}
\Arrowvert \mu^k -u_0 \rVert_{L^2(B_1)} \leq \frac{\tau^{k+1}}{1-\tau} \delta.
\end{equation}
The inequalities \eqref{induction} and \eqref{mu0} together with the triangle inequality imply that
\begin{equation}
 \Arrowvert u_{s^k} -u_0 \rVert_{L^2(B_1)} \leq  2 \tau^k \delta.
\end{equation}
Finally let us observe that for any $0< r<1$ there exists a nonnegative integer $ k$ such that 
$ s^{k+1} \leq r <s^k$. Hence
\begin{equation}
 \Arrowvert u_r -u_0\rVert_{L^2(B_1)} \leq s^{-3} \Arrowvert u_{s^k} -u_0\rVert_{L^2(B_1)} 
 \leq 2 \cdotp 4 ^3 \tau ^k \delta  \leq 4^4 r^\gamma \delta,
\end{equation}
where $ \gamma= \frac{\ln \tau}{\ln s} <1 $.

\end{proof}

\begin{corollary} \label{case1uniqueness}
 Assume that $ \mu$ given by \eqref{u01}
 is a blow-up for $ u$ at the origin, that is \eqref{uj} and \eqref{ujconv} hold for a sequence 
 $ r_j \rightarrow 0$. Then the blow-up of $ u $ at the origin is unique; 
 \begin{equation*}
  \frac{ u(rx)}{r^2} \rightarrow \mu (x),~ \textrm{as } ~ r \rightarrow 0.
 \end{equation*}
\end{corollary}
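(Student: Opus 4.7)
The plan is to deduce this corollary from Proposition \ref{important}. By hypothesis we have $u_{r_j}\to\mu$ in $C^{1,\gamma}(B_1)$; what Proposition \ref{important} requires is that $\|u_{r_j}-\mu\|_{L^2(B_2)}$ be small. So the first step is to upgrade the convergence from $B_1$ to $B_2$. Since $u\in C^{1,1}_{loc}$, the rescalings $u_{r_j}$ are uniformly bounded in $C^{1,1}(B_2)$ for all $j$ large enough that $B_{2r_j}\subset\Omega$. By Arzel\`a--Ascoli I may pass to a subsequence converging in $C^{1,\alpha}(B_2)$ to some $\tilde\mu$; on $B_1$ this limit agrees with $\mu$, and both $\mu$ and $\tilde\mu$ are homogeneous of degree two (Weiss' monotonicity formula, as in Section~3), so they coincide on all of $B_2$. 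Hence $\|u_{r_j}-\mu\|_{L^2(B_2)}\to 0$.

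Next, choose $\delta>0$ small enough that Proposition \ref{important} is applicable, and then pick any $j$ large enough that $\|u_{r_j}-\mu\|_{L^2(B_2)}\le\delta$. Setting $v:=u_{r_j}$ (which is itself a solution of the same normalised double obstacle problem on $B_2$), the proposition produces a double-cone solution $v_0$ and $0<\gamma<1$ such that
\begin{equation*}
\|v_s-v_0\|_{L^2(B_1)}\le C s^\gamma\|v-\mu\|_{L^2(B_2)}\qquad\text{for every }0<s<1.
\end{equation*}
Using the semigroup identity $v_s(x)=u_{r_j}(sx)/s^2=u(r_j s x)/(r_j s)^2=u_{r_j s}(x)$, this reads $u_{r_j s}\to v_0$ in $L^2(B_1)$ as $s\to 0+$.

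Given any $\rho_n\to 0+$, set $s_n:=\rho_n/r_j\to 0+$; then $u_{\rho_n}=u_{r_j s_n}\to v_0$, so the full limit $u_\rho\to v_0$ as $\rho\to 0+$ exists. To identify $v_0$, apply this to the original sequence $r_k\to 0+$ from the hypothesis: on the one hand $u_{r_k}\to v_0$, on the other hand $u_{r_k}\to \mu$ by assumption, hence $v_0=\mu$. This gives $u_r\to\mu$ as $r\to 0+$, which is the desired uniqueness.

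The only non-trivial point is the bookkeeping in the first paragraph: the hypothesis only records $C^{1,\gamma}(B_1)$-convergence, whereas Proposition \ref{important} demands $L^2(B_2)$-smallness. Upgrading the convergence to $B_2$ is what the uniform $C^{1,1}$-bound plus homogeneity of the blow-up limit are for. Once that is done, everything reduces to applying Proposition \ref{important} once, and matching the two limits along $r_k$.
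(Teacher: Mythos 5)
Your proposal is correct and follows essentially the same route as the paper: find one scale at which $\|u_\rho-\mu\|_{L^2(B_2)}$ is small, invoke Proposition \ref{important} to get a full limit $u_r\to u_0$, and identify $u_0=\mu$ by matching along the original sequence. The extra care you take in passing from $C^{1,\gamma}(B_1)$-convergence to $L^2(B_2)$-smallness (which the paper leaves implicit, and which also follows directly from the scaling identity $\|u_\rho-\mu\|_{L^2(B_2)}=2^{1+n/2}\,\|u_{2\rho}-\mu\|_{L^2(B_1)}$ using homogeneity of $\mu$) is a legitimate filling-in of a bookkeeping step, not a different argument.
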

\begin{proof}
 Since $ u_{r_j} \rightarrow \mu $ as $ j \rightarrow \infty$, for any $ \delta > 0  $ small
 we can find a small $ \rho > 0 $ such that $ \Arrowvert u_\rho - \mu  \rVert_{L^2(B_2)} \leq \delta$.
 Now we can apply Proposition \ref{important} for the function $ u_\rho$, and obtain 
 $ u_{\rho r} \rightarrow u_0$ as $ r \rightarrow 0+$. Hence $ u_r \rightarrow u_0$ and $ u_0 =\mu$.
\end{proof}

\begin{theorem} \label{theorem2}
Let $ u $ be the solution to the two-dimensional double obstacle problem with obstacles 
$ p^1=-x_1^2-x_2^2$ and $ p^2= x_1^2+ x_2^2$. Assume that $ \Arrowvert u -\mu \rVert_{L^2(B_2)} =\delta $ is 
sufficiently small, where $\mu$ is a double-cone solution. 
Then in a small ball $ B_{r_0}$ the free boundary consists of four $ C^{1, \gamma}$- graphs meeting at the origin, denoted
by $ \varGamma_1^+, \varGamma_1^-, \varGamma_2^+,\varGamma_2^-$.
Neither $ \varGamma_1 =\varGamma_1^+
\cup \varGamma_1^-$ nor $ \varGamma_2= \varGamma_2^+
\cup \varGamma_2^-$ has a normal at the origin. The curves
$ \varGamma_1^+ $ and $ \varGamma_2 ^+$ cross at a right angle, the same is true for
$ \varGamma_1^- $ and $ \varGamma_2 ^-$.
\end{theorem}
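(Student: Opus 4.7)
The plan is to combine the uniqueness of the blow-up and its quantitative rate from Corollary \ref{case1uniqueness} and Proposition \ref{important} with the classical $C^{1,\gamma}$-regularity theory for the single obstacle problem, applied locally at every non-origin point of $\varGamma \cap B_{r_0}$.

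First I would apply Proposition \ref{important} to obtain, for $0 < r < 1$,
\begin{equation*}
\lVert u_r - \mu \rVert_{L^2(B_1)} \leq C r^\gamma \delta.
\end{equation*}
Since $u \in C^{1,1}_{loc}$, the rescalings $u_r$ are uniformly bounded in $C^{1,1}(B_1)$, so interpolation upgrades this $L^2$-decay to a $C^{1,\alpha}(B_{1/2})$-decay with a comparable polynomial rate. Recall that $\mu = \mu_{\phi_1,\phi_2}$ has free boundary consisting of four rays through $0$ at angles $\phi_1/2, (\pi+\phi_1)/2, -\phi_2/2, -(\pi+\phi_2)/2$, and that the cones $\mathcal{C}_1, \mathcal{C}_2$ have disjoint closures in $\overline{B}_1\setminus\{0\}$.

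Next I would localize. Choose $r_0$ so small that the $C^{1,\alpha}$-error above is negligible compared with $\dist(\mathcal{C}_1 \cap S^1, \mathcal{C}_2 \cap S^1)$. Then every $y \in \varGamma \cap (B_{r_0}\setminus\{0\})$ lies in exactly one of $\overline{\{u=p^1\}}$ or $\overline{\{u=p^2\}}$, but not both. In a ball $B_{c|y|}(y)$ disjoint from the other coincidence set, the function $w := u - p^1$ (respectively $p^2 - u$) solves a classical single obstacle problem with zero obstacle and Laplacian $-\lambda_1$ (respectively $\lambda_2$) on its positivity set. I would then rescale by $v(x) := w(y + |y|x)/|y|^2$. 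Because $\mu - p^1$ in a neighborhood of any interior point of the ray $\partial\mathcal{C}_1 \cap S^1$ is \emph{exactly} a halfspace solution of the single obstacle problem, and because $u_{|y|}$ is close in $C^{1,\alpha}$ to $\mu$, the rescaled $v$ is close to a halfspace solution with a rate tending to zero as $|y|\to 0$. The classical flatness-implies-regularity theorem of Caffarelli (cf.\ \cite{Caf98}) then yields that $\partial\{w > 0\}$ is a $C^{1,\gamma}$-graph in $B_{c|y|/2}(y)$ with normal $C^{1,\gamma}$-close to that of the nearby ray of $\mu$.

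Finally I would patch: covering $\varGamma \cap (B_{r_0}\setminus\{0\})$ by a Whitney-type family of such balls, the local $C^{1,\gamma}$-graphs glue into four disjoint $C^{1,\gamma}$ arcs $\varGamma_1^\pm, \varGamma_2^\pm$. The perpendicularity $\varGamma_1^+ \perp \varGamma_2^+$ (and $\varGamma_1^- \perp \varGamma_2^-$) at the origin is inherited from $\mu$, since the corresponding rays differ by exactly $\pi/2$; this is the geometric content of the identity $m_\pm k_\pm = -1$ from Case 1 of Theorem \ref{theorem1}. Neither $\varGamma_1 = \varGamma_1^+\cup\varGamma_1^-$ nor $\varGamma_2 = \varGamma_2^+\cup\varGamma_2^-$ can have a normal at $0$, because the two halves form a corner of opening strictly less than $\pi$. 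The main obstacle of the argument is promoting the interior $C^{1,\gamma}$-regularity of each arc to a well-defined tangent at the singular point $0$: this does not follow from the classical theorem alone, but the polynomial rate $r^\gamma$ in Proposition \ref{important} forces summable convergence of the free boundary normals as $r \to 0$, and hence a unique tangent direction matching the corresponding ray of $\mu$.
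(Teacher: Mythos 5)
Your overall architecture matches the paper's: the quantitative rate of Proposition \ref{important}, a reduction at each dyadic scale to a single obstacle problem whose model solution is a halfspace solution inherited from $\mu$, a flatness-implies-regularity theorem, and the observation that the polynomial decay rate forces the free boundary normals to converge to the corresponding ray directions of $\mu$ (the paper runs this through the $L^2$ directional-derivative flatness criterion and Corollary 8.1 of \cite{JA}, i.e. the bound $\lvert \nu(x_0)-\nu(0)\rvert \leq C\lvert x_0\rvert^{\gamma}\delta$, rather than through a $C^{1,\alpha}$ interpolation and Caffarelli's classical theorem, but both routes are legitimate). The perpendicularity and the corner at the origin are handled the same way in both arguments.

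There is, however, one genuine gap in your localization step. You establish only that $\varGamma_1$ and $\varGamma_2$ are separated away from the origin (``every $y$ lies in exactly one of $\overline{\{u=p^1\}}$ or $\overline{\{u=p^2\}}$''), but what the argument actually needs is the stronger confinement $\varGamma_i \cap B_{r_0} \subset \overline{Q\setminus K}$ for arbitrarily thin cones $K\subset \mathcal{C}_i\subset \overline{Q}$ around the rays $\partial\mathcal{C}_i$. Closeness of $u_r$ to $\mu$ in $C^{1,\alpha}$ rules out free boundary where $\mu-p^1$ and $p^2-\mu$ are quantitatively positive (the ``outside $Q$'' half), but it does \emph{not} rule out free boundary points $y$ deep in the interior of $\mathcal{C}_i$: there $p^i-\mu\equiv 0$, so smallness of $u-\mu$ is compatible with $u\neq p^i$. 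At such a point the blow-down of $p^2-u$ at scale $\lvert y\rvert$ would be close to the zero function, not to a halfspace solution, so your flatness step would have nothing to compare against, and the ``four graphs'' conclusion would fail to account for those points. The paper closes this with a nondegeneracy argument (Step 1 of its proof): applying the maximum growth lemma (Lemma 5 in \cite{Caf98}) to $\omega=p^2-u_r$ in $K\cap\{\varrho<\lvert x\rvert<1/2\}$ shows that $0\leq\omega\leq\varepsilon<\sigma^2/4$ forces $\omega\equiv 0$ there, i.e. $u=p^2$ throughout $K\cap B_{r_0}$. You need to insert this step before your Whitney covering; once it is in place, every remaining free boundary point does lie near one of the four rays and the rest of your argument goes through.
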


\begin{proof}
 The proof of the theorem is based on Proposition \ref{important} and on similar estimates obtained for the 
 classical obstacle problem in \cite{JA}. 
 According to  Proposition \ref{important} there exists a double-cone solution $u_0$ such that
 \eqref{estimate} holds. Moreover, applying Lemma \ref{easylemma} we obtain
 \begin{equation} \label{nice}
  \Arrowvert u_r -u_0 \rVert_{W^{1,2}(B_{1/2})} \leq C r^\gamma \Arrowvert u-\mu \rVert_{L^2(B_2)}.
 \end{equation}
 Without loss of generality we assume that $ u_0$ is given by \eqref{u01}; that is 
 \begin{equation*} 
  u_0= \mu_{\phi_{1}, \phi_2}(r, \theta):=
  \begin{cases}
  r^2,  & \mbox{if }~ -\phi_2 \leq 2 \theta \leq \phi_1 \\
  r^2 \cos(\phi_1-2\theta), & \mbox{if }~  \phi_1 \leq  2 \theta \leq \pi +\phi_1 \\
  r^2\cos(\phi_2+2\theta), & \mbox{if } ~ -\pi -\phi_2  \leq  2 \theta \leq  -\phi_2 \\
    -r^2,  & \mbox{otherwise }  \\
 \end{cases}
\end{equation*}

\begin{figure}
\begin{center}
 \begin{tikzpicture}[scale=1.2]
\draw[ultra thin,->] (-2.5,0) -- (2.5,0) node[anchor=west] {$x_1$} ;
\draw[ultra thin, ->] (0,-2.5) -- (0,2.5) node[anchor=south] {$x_2$} ;
\draw[red,thick,dashed]  (0,0) -- (2.5,0.5)node[above]{$\varGamma_2^+$};
\draw[blue,thick,dashed]  (0,0) -- (-0.5,2.5)node[left]{{$\varGamma_1^+$}} ;
\draw[red,thick,dashed]   (0,0) -- (2.2,-2.2)node[below]{{$ \varGamma_2^-$}};
\draw[blue,thick,dashed]   (0,0) -- (-2.2,-2.2)node[above]{$\varGamma_1^-$};
 \node at (2.1,1.3) {\small{{$\Delta u =0$}}};
 \node at (1.8,-0.4) {\small{$u=p^2$}};
 \node at (-1.5,0.3) {\small{$u= p^1 $} };
  \node at (0,-1.4) {\small{ $ \Delta u=0$} } ;
  \node at (1,2) {$\mathcal{S}_1$};
  \node at (0.3,-2) {$ \mathcal{S}_2$};
  \draw [thick,red] plot [smooth, tension=0.7] coordinates { (0,0) (0.6,0.2) (1,0.5) (2.5,0.4) };
  \draw [thick,red] plot [smooth, tension=0.7] coordinates { (0,0) (0.7,-0.6)(1,-1.2)(1.5,-1.4) (1.9,-2.1)(2.3,-2.2) };
  \draw [thick,blue] plot [smooth, tension=0.7] coordinates { (0,0) (-0.1,0.4) (-0.4,1) (-0.4,2.5) };
   \draw [thick,blue] plot [smooth, tension=0.7] coordinates { (0,0) (-0.45,-0.5) (-1,-0.8) (-1.5,-1.8) (-2.2,-2) };
   \draw[fill] (1,0.5) circle [radius=0.025];
   \node [left] at (1,0.5) {\small{$x_0$}};
   \draw [->] (1,0.5) -- (0.875,1.125)node[left]{\tiny{$ \nu(0)$}};
    \draw [ultra thin,dashed] (0.75,1.75) -- (1.25,-0.75);
    \draw [->] (1,0.5) -- (1.02,1.13)node[right]{\tiny{$ \nu(x_0)$}};
      \draw [->] (0,0) -- (-0.15,0.75)node[above]{\tiny{$ \nu(0)$}};
      \draw [black] (1,0.5) circle [radius=0.5];;
           \draw  (-0.035,0.175) -- (0.135,0.21) -- (0.175,0.035);
         \draw  (-0.13,-0.13) -- (0,-0.242) -- (0.13,-0.13);
    \draw  (1.02,0.36) -- (1.19,0.39) -- (1.22,0.228);
\end{tikzpicture}
\end{center}
\begin{center}
 \footnotesize{\textbf{ Figure 4.1:} 
 {The local behaviour of the free boundary, with obstacles touching at a single point in \textit{Case 1}.\\
 \small{Here $ \varGamma_i^\pm$ are the pieces of the free boundary for $ u$, while the dashed lines 
are the free boundary to the double-cone solution $u_0$.}}}
 \end{center}
 \end{figure}

 As before, we denote by $ \mathcal{C}_i=\{u_0= p^i\}$, and let $ \vartheta_i $ be the opening angle for $\mathcal{C}_i$, then  $0 < \vartheta_i <\pi $, $ \vartheta_1= (\phi_1+\phi_2)/2$
 and $\vartheta_2=\pi-\vartheta_1$.

 \par 
 We want to show the regularity of $ \varGamma_2=\partial \{u=p^2\}$ in a neighbourhood of the origin. 
 We perform the proof in two steps.
 
 \par
 \textit{ Step 1:} We show that $ \varGamma_2 \cap B_{r_0} \subset \overline{(Q \setminus K)} $, 
 for any open cones  $ Q$ and $ K $,
 having  a common  vertex at the origin, such that  $ K \subset \mathcal{C}_2 \subset \overline{Q}$ and 
 $ \partial K \cap \partial \mathcal{C}_2 =
 \partial Q \cap \partial \mathcal{C}_2 =\{ 0 \} $, where  $r_0 >0$ is small depending on $K, Q$.
 
 \par
 Let $ K \subset \mathcal{C}_2$ be a cone with a vertex at the origin, such that 
 $ \partial K \cap \partial \mathcal{C}_2 = \{0\} $. Fix  $ 0 <\varrho < 1/8$, and denote by $ V:=
 K \cap \{\varrho<x <1/2\}$, and $ \sigma:= \dist(V,\partial \mathcal{C}_2)$.
 First we will show that $ u_r (x) = p^2 (x)$ in $ V $
for small $ r> 0$.  
Take $0<  \varepsilon< \frac{\sigma^2}{4}$, then there exists $ r_\varepsilon=r_\sigma$, such that  
$\lvert u_r(x) - u_0(x) \rvert \leq \varepsilon $ if $ r \leq r_\varepsilon$.
Let $ \omega := p^2-u_r $, for a fixed $ r<r_\varepsilon$, then $ 0 \leq  \omega \leq \varepsilon $ solves the following 
normalised obstacle problem with zero obstacle,
\begin{equation} \label{lambda2k}
 \Delta \omega= \lambda_2 \chi_{\{ \omega > 0\}} \textrm{ in } \mathcal{C}_2.
\end{equation}
Fix $x_0 \in V$, if $ \omega(x_0)>0$, then we can apply the maximum growth lemma (Lemma 5 in \cite{Caf98})
for the solution to the obstacle problem, and obtain
\begin{equation}
 \frac{\sigma^2}{4} >\varepsilon \geq  \sup_{B_\sigma(x_0)} \omega \geq \frac{\sigma^2}{2},
\end{equation}
which is not possible, hence $ \omega(x_0)=0$.

\par
Thus we have shown that $ \frac{u(rx)}{r^2} = p^2 (x)$ for all  $ r < r_\varepsilon$
 and any $ x \in K $, such that $ \varrho < \lvert x \rvert < 1/2$.  Hence $ u(y)= p^2(y)$ if 
 $\varrho r  <\lvert y \rvert < \frac{r}{2}$ for all $ r < r_\varepsilon$, and 
 therefore $ u=p^2 $ in $ K \cap B_{r_0}$, $ r_0:=r_\varepsilon/2$.

 \par 
 Taking another open cone $ Q$, with a vertex at the origin, and such that $ \mathcal{C}_2 \subset Q $, 
 $ \partial Q \cap \partial \mathcal{C}_2 =\{ 0 \} $,
 we show that $ \Gamma_2 \cap B_r \subset \overline{Q} $ if $r $ is small. Let $ \varrho > 0$, then $ u_0 - p^2 < 0$
 in $ Q \setminus B_\varrho$, and therefore $ u_r -p^2 < 0 $ in $ Q \setminus B_\varrho$ for small $r>0$.
Hence $ u < p^2 $ in $ Q \cap B_r$ for a small fixed $ r >0 $, and $ \varGamma_2 \cap Q \cap B_r= \emptyset$.
 
 \par
Now we can write $ \varGamma_2= \varGamma_2^+ \cup \varGamma_2^- $, where $ \varGamma_2^+ \cap 
\varGamma_2^- =\{0\}$, and $ \varGamma_2^\pm $ are "squeezed" between ${K}$ and ${Q}$.

\par
\textit{ Step 2:}
We show that $\varGamma_2^+  $ is a $ C^{1,\alpha}$-graph up to the origin.
 Fix any $ x_0 \in \varGamma_2 \cap B_{r_0/2} $, and denote by $ d:=\lvert x_0 \rvert$. 
 Let $ d_0:= \frac{d}{2} \sin\vartheta_K $, where $\vartheta_K $ is the opening angle of $ K$.
 Since $ x_0 \notin K$ and $ x_0 \in Q$, we see that $ B_{d_0}(x_0) \cap \mathcal{C}_1= \emptyset$
 and $ B_{d_0}(x_0) \cap \mathcal{S}_2= \emptyset$, see Figure 4.1. Hence the function $ \omega:= p^2 -u $
 solves the following normalised obstacle problem
 \begin{equation} \label{omegasolution}
  \Delta \omega= \lambda_2 \chi_{\{\omega>0\}} \textrm{ in } B_{d_0}(x_0).
 \end{equation}
Now let us show that $p^2 -u_0$ is a halfspace solution for \eqref{omegasolution}.
Denote by $ \nu(0)$ the unit upward normal to the line 
$\{ \theta = \phi_1/2 \}$, as indicated in Figure 4.1;
\begin{equation*}
 \nu(0)=\left(-\sin\frac{\phi_1}{2}, \cos\frac{\phi_1}{2}\right).
\end{equation*}
The following is true,
\begin{equation} \label{nu0mu}
 u_0(x)= p^2(x)- 2 (\nu(0) \cdot x)_+^2 ~~\textrm{ if }~ u_0(x) > p^1(x), \textrm{ and } x \notin \mathcal{S}_2.
\end{equation}
Indeed, according to Lemma \ref{lemmaqs}, if $ x \in \mathcal{S}_1$, then
\begin{align*}
p^2(x)-u_0(x)= x_1^2+x_2^2 -x_1^2 \cos \phi_1 + x_2^2 \cos \phi_1-2x_1x_2\sin \phi_1 \\
=2x_1^2 \sin^2 \frac{\phi_1}{2}+2x_2^2 \cos^2 \frac{\phi_1}{2} -4x_1x_2 \sin\frac{\phi_1}{2}\cos\frac{\phi_1}{2} \\
=2 \left(-x_1 \sin \frac{\phi_1}{2}+x_2 \cos\frac{ \phi_1}{2} \right)^2= 2( \nu (0)\cdot x)^2.
\end{align*}
Since $ u_0= p^2$ in $ \mathcal{C}_2$, the proof of \eqref{nu0mu} is complete. 
Hence $p^2- u_0 $ is a halfspace solution for the 
 obstacle problem in $ B_{d_0}(x_0)$, depending on the direction $ \nu(0)$.  
Therefore we obtain
\begin{equation} \label{chain}
 \begin{aligned}
 \Arrowvert \nabla'_{\nu(0)} \omega \rVert_{L^2(B_{d_0}(x_0))} =
 \Arrowvert \nabla'_{\nu(0)} \left( u-u_0-2(\nu(0)\cdot x)_+^2 \right) \rVert_{L^2(B_{d_0}(x_0))} \\
 = \Arrowvert \nabla'_{\nu(0)} (u-u_0) \rVert_{L^2(B_{d_0}(x_0))} \leq 
 2\Arrowvert \nabla (u-u_0) \rVert_{L^2(B_{d_0}(x_0))}
\end{aligned}
\end{equation}
where by definition $ \nabla_e':=\nabla- e(\nabla \cdot e)$ for a unit vector $e$.

\par
According to Lemma \ref{easylemma}
\begin{equation*}
   \Arrowvert u-u_0\rVert_{W^{1,2}(B_{d_0}(x_0))} \leq c \Arrowvert u-u_0\rVert_{L^2(B_2)},
\end{equation*}
hence by \eqref{chain}
\begin{equation*}
 \Arrowvert \nabla'_{\nu(0)} \omega \rVert_{L^2(B_{d_0}(x_0))} =
  \Arrowvert \nabla (u-u_0) \rVert_{L^2(B_{d_0}(x_0))} \leq c \Arrowvert u-u_0\rVert_{L^2(B_2)}
 \leq c\delta,
\end{equation*}
which says that $\omega$ is almost flat in the direction $\nu(0)$.
According to Theorem 8.1 in \cite{JA}, $ \varGamma_2 \cap B_{d_0/2}(x_0)$ is a $C^{1,\gamma}$- graph, 
and there exists a unit normal vector to $ \varGamma_2$ at the point
 $ x_0$, denote it by $ \nu(x_0)$. Furthermore, it follows from Corollary 8.1 in \cite{JA} and 
 inequality \eqref{nice} that 
 \begin{equation*}
 \begin{aligned}
  \lvert \nu(x_0)-\nu(0)\rvert \leq c d_0^{-1} \lVert \nabla'_{\nu(0)} \omega \rVert_{L^2(B_{d_0}(x_0))} 
  \leq c d_0^{-1}\Arrowvert \nabla (u-u_0)\rVert_{L^2(B_{2d})} \\
  =16c d_0^{-1} d  \left( \int_{B_{1/2}} \lvert \nabla u(4dy)-\nabla u_0(4dy)\rvert^2 dy  \right)^\frac{1}{2}
  \leq c d_0^{-1} d  \Arrowvert u_{4d}-u_0 \rVert_{L^2 (B_1)} \\
  \leq \frac{cd}{d_0} d^{\gamma} \Arrowvert u-u_0\rVert_{L^2(B_2)},
  \end{aligned}
 \end{equation*}
where $c$ stands for a general constant, and it does not depend on $d$.
Now we may conclude that
\begin{equation} \label{uniformc1alpha}
\lvert \nu(x_0) -\nu(0 )\rvert \leq \frac{cd}{d_0} d^\gamma \delta= 
\frac{c\lvert x_0\rvert^\gamma}{ \sin{\vartheta_K}}\Arrowvert u-u_0\rVert_{L^2(B_2)} ,
\end{equation}
and therefore $ \varGamma_2^+$ is a $C^{1,\gamma}$-graph up to the origin, for any $ 0<\gamma <1$.

\par
The proof of $ C^{1,\gamma}$-regularity for $ \varGamma_2^- $
can be obtained similarly. In that case $ \varGamma_2^- $ is almost flat in the direction 
$\nu_2^-(0)=\left(-\cos\frac{\phi_2}{2}, -\sin \frac{\phi_2}{2}\right) \neq \nu(0)$, and we see that 
 $\varGamma_2$ is Lipschitz, but it is not $ C^{1}$ at the origin.

 \par 
By a similar argument we can study $ \varGamma_1$, and see that 
$ \varGamma_1^+ $ is almost flat in the direction 
$ \nu_1^+(0)= \left( \cos \frac{\phi_1}{2}, \sin \frac{\phi_1}{2}\right)$.
  Observe that $ \nu_1^+(0)$ and $\nu(0)$ are orthogonal, which means 
  $ \varGamma_1^+ $ and $ \varGamma_2^+ $ cross at the origin.
 
\end{proof}

It  follows from energy characterisation of free boundary points, \eqref{wlimit},  and from  Corollary \ref{case1uniqueness} that if at a free boundary point $ x_0$, the solution $ u$ has a halfspace blow-up
solution corresponding to $ p^i$, then all possible blow-ups at $ x_0$ are also halfspace solutions.

\begin{theorem} \label{remarkhalfspace}
Let $u$ solve the double obstacle problem in $ B_1$ with obstacles $p^1\leq p^2$.
If $ u_{r_j}\rightarrow u^1 $ as $ j \rightarrow \infty$, where $ u^1$ is  a halfspace solution corresponding to $ p^1$, then the blow-up of $u$ at the origin is unique, and $ \varGamma_1 \cap B_{1/2}$ 
is a $ C^{1,\gamma}$-curve.
\end{theorem}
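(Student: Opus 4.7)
The plan is to reduce the problem locally to a classical single obstacle problem with lower obstacle $p^1$ and invoke the standard regularity theory, mirroring Step 1 and Step 2 in the proof of Theorem \ref{theorem2} with the halfspace blow-up $u^1$ taking the role of the double-cone blow-up.

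First I would locate the set $\{u^1 = p^2\}$. Writing $u^1 - p^1 = \tfrac{-\lambda_1}{2}(x\cdot e)_+^2$, the equation $u^1 = p^2$ reduces on the coincidence side $\{x\cdot e \leq 0\}$ to $p^1 = p^2$, which holds only at the origin, and on $\{x\cdot e > 0\}$ to the homogeneous quadratic $\tfrac{-\lambda_1}{2}(x\cdot e)^2 = p^2 - p^1$, whose zero set is the union of at most two lines through the origin. Positive definiteness of $p^2 - p^1$ rules out that any of these lines coincides with $\{x\cdot e = 0\}$, so each is transverse to the free line of $u^1$. Therefore $\{u^1 = p^2\}$ is contained in a set $\ell$ consisting of at most two closed half-lines from the origin, each transverse to $\{x\cdot e = 0\}$.

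Next, for any open cone $Q$ around $\ell$ with vertex at the origin, I would apply the maximum-growth argument from Step 1 of the proof of Theorem \ref{theorem2} (based on Lemma 5 of \cite{Caf98}) to $p^2 - u$ in order to force $\varGamma_2 \cap B_{r_0} \subset \overline{Q}$ for some small $r_0 = r_0(Q) > 0$. In $B_{r_0} \setminus \overline{Q}$ one has $u < p^2$ strictly, so $w := u - p^1 \geq 0$ solves a classical normalised single obstacle problem $\Delta w = -\lambda_1 \chi_{\{w > 0\}}$ there, whose blow-up at the origin is the halfspace solution $w^1 = \tfrac{-\lambda_1}{2}(x \cdot e)_+^2$. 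Theorem 8.1 of \cite{JA} (flatness implies $C^{1,\gamma}$ for the classical obstacle problem), together with the $C^{1,\alpha}$-closeness of $u_{r_j}$ to $u^1$, then delivers a uniform H\"older modulus of continuity for the unit normal to $\varGamma_1$ at each $x_0 \in \varGamma_1 \cap (B_{r_0} \setminus \overline{Q})$, with the normal everywhere close to $e$, exactly as in Step 2 of the proof of Theorem \ref{theorem2}.

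To close the argument I would use that the convergence $u_r \to u^1$ in $C^{1,\alpha}$ confines $\varGamma_1 \cap B_r$ to an arbitrarily thin neighbourhood of $\{x \cdot e = 0\}$ as $r \to 0$; by transversality of $\ell$ to this line, one can choose $Q$ thin enough that $\varGamma_1 \cap B_{r_1} \subset B_{r_0} \setminus \overline{Q}$ for some $r_1 > 0$. Hence $\varGamma_1 \cap B_{r_1}$ is a $C^{1,\gamma}$-curve through the origin with a well-defined normal $e$, and the existence of this normal forces the blow-up at the origin to be unique and equal to $u^1$. The main obstacle is the a priori possibility that $\{u = p^2\}$ pinches up against $\varGamma_1$ and spoils the single-obstacle reduction; the transversality of $\ell \supset \{u^1 = p^2\}$ to the free line of $u^1$ is the essential geometric input that rules this out.
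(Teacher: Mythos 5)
Your reduction to the single obstacle problem is geometrically sound as far as it goes: $\{u^1=p^2\}$ is indeed at most a half-line transverse to the free line of $u^1$ (in Case 1 it is perpendicular to it), and away from a thin cone around it one has $u<p^2$, so $w=u-p^1$ solves the classical obstacle problem there; the paper uses the same confinement device in Step 1 of the proof of Theorem \ref{theorem2}. But there is a genuine gap, and it sits at the heart of the statement. The hypothesis gives only \emph{subsequential} convergence $u_{r_j}\rightarrow u^1$, whereas your confinement of $\varGamma_2$ into a thin cone throughout a punctured ball $B_{r_0}\setminus\{0\}$, and your confinement of $\varGamma_1$ near $\{x\cdot e=0\}$, both require $u_r$ to stay close to $u^1$ at \emph{every} small scale $r$. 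That is precisely the uniqueness of the blow-up, which you then propose to deduce at the end from the regularity of $\varGamma_1$ --- the argument is circular. Uniqueness is genuinely at stake here: the halfspace solutions corresponding to $p^1$ form a continuum (e.g.\ all rotations in Case 1), the set of subsequential limits of $r\mapsto u_r$ is connected, so a priori the blow-up could rotate slowly as $r\rightarrow 0$, and the finitely-many-limit-points argument of Lemma \ref{small} is unavailable. The paper supplies exactly the missing propagation-of-closeness mechanism: it linearises around the \emph{minimal} halfspace solution, writes $v^0=\sum_k A_k r^k\sin(k\theta)$ in the half-plane, kills $A_1$ via the strong $W^{1,2}$ convergence of Lemma \ref{easylemma2} and $A_2$ via the orthogonality furnished by minimality, obtains the decay $\lVert v^0(s\cdot)\rVert_{L^2(B_1)}\leq s^3\lVert v^0\rVert_{L^2(B_1)}$, and iterates as in Corollary \ref{key} and Proposition \ref{important}. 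Nothing in your proposal plays the role of the minimality/orthogonality step.

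A second, related defect: even if full convergence $u_r\rightarrow u^1$ were granted, ``$C^{1,\gamma}$ up to the origin'' needs a quantitative rate $\lVert u_r-u^1\rVert\leq Cr^{\gamma}\delta$. Theorem 8.1 of \cite{JA}, applied in a ball of radius comparable to $d=\lvert x_0\rvert$, only converts the flatness of $w$ at that scale into a bound of the same size for $\lvert\nu(x_0)-\nu(0)\rvert$ (this is how \eqref{uniformc1alpha} is obtained); without a power-rate decay of the flatness in $d$ you get at best a $C^1$ modulus for the normal. That rate is again the output of the flatness-improvement iteration you skipped, so the single-obstacle reduction alone cannot deliver the stated $C^{1,\gamma}$ conclusion.
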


\begin{proof}
Without loss of generality we may assume that $ u^1=p^1$ if $ x_2\leq 0$. 
We can employ the same idea we used in Definiton \ref{minimal}, when defining minimal double-cone solutions, in order to define 
minimal halfspace solutions corresponding to $p^1$: We say that $ u^1$ is a minimal halfspace solution with respect to $u$, if it is closer to $ u$ in $L^2(B_1)$-norm than any other halfspace solution. 
Employing a very similar flatness improvement argument, 
we obtain  the uniqueness of blow-up limits, and the $ C^{1,\gamma}$-regularity of $ \varGamma_1$.  
Let us provide a  brief sketch of the proof. 

\par
Let $ \mu^j $ be a minimal halfspace solution with respect to $ u^j:=u_{r_j}$, and let $ v^j$ be the function defined in $ \eqref{vj}$, then 
$ v^j \rightarrow v^0$ weakly in $ W^{1,2}(B_1)$ and strongly in $L^2(B_1)$, where $ v^0$ is harmonic in the halfplane $\{x_2 >0\}$, and  vanishes on the halfplane $ \{x_2 \leq 0\}$, see the 
proof of Proposition \ref{vjvo}. Hence $v^0 \Delta v^0=0$ in a weak sense, and 
\begin{equation}
v^0(x)=v^0(r,\theta)=\sum_{k=1}^{\infty} A_k r^k \sin(k\theta) \textrm{ in }  \{x_2 >0\}.
\end{equation}
By Lemma \ref{easylemma2}, $ v^j \rightarrow v^0$ in $W^{1,2}(B_{1/2})$, and therefore $ A_1=0$.
Furthermore, by  the minimality assumption, $ v^0$ is orthogonal to the function $r^2 \sin{(2\theta)}$, and therefore $A_2=0$, which implies that
\begin{equation*}
  || v^0(sx)||_{L^2(B_1)}\leq s^3 ||v^0||_{L^2(B_1)},
  \end{equation*}
we refer to the proof of Lemma \ref{lemmav0} for a similar argument. Repeating the iteration argument in Corollary \ref{key} and Proposition \ref{important}, we will obtain that 
$\varGamma_1$ is a $C^{1,\gamma}$-curve in a neighbourhood of the origin.
We leave out the details, since the technique is very similar to the argument we used earlier.
\end{proof}

\par 
Let us observe that Theorem \ref{remarkhalfspace} on the uniqueness of halfspace blow-up limits holds  also in \textit{Cases 2,~3}, since we do not use any relation between
obstacles in the proof. All we need is to know that we can rotate the free boundary, and choose a minimal halfspace solution, which provides the precious orthogonality property.

\par 
 The following lemma from one variable calculus will be quite useful when showing the uniqueness of blow-ups in case there are only finitely many blow-up solutions. 
\begin{lemma} \label{small}
Let $ f$ be a nonnegative continuous function in the interval $ (0,1)$, and  assume that 
\begin{equation}
\liminf_{t \rightarrow 0+} f (t)=0 \textrm{  and }  \limsup_{t\rightarrow 0+} f (t)=A >0. 
\end{equation}
Then for any $ 0<a <A$ there exists a sequence $ t_j \rightarrow 0 $, such that $ f(t_j) \rightarrow a$.
\end{lemma}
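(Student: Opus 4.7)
The plan is to produce the sequence $t_j$ directly by the intermediate value theorem; in fact I expect to obtain the stronger conclusion $f(t_j) = a$ for every $j$, which of course implies $f(t_j) \to a$.

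Fix $a \in (0,A)$ and choose a positive sequence $\delta_j \downarrow 0$. For each $j$ I will locate $t_j \in (0,\delta_j)$ with $f(t_j) = a$, and then $t_j \to 0$ automatically. The construction proceeds as follows. First, because $\liminf_{t\to 0+} f(t) = 0 < a$, there exists $s_j \in (0,\delta_j)$ with $f(s_j) < a$. Next, because $\limsup_{t\to 0+} f(t) = A > a$, I can find a point $s'_j \in (0, s_j)$ with $f(s'_j) > a$ (the $\limsup$ condition lets me pick such a point inside any given right neighborhood of $0$, in particular inside $(0,s_j)$). Since $f$ is continuous on the closed interval $[s'_j, s_j]$ and takes values straddling $a$ at the endpoints, the intermediate value theorem produces $t_j \in (s'_j, s_j) \subset (0,\delta_j)$ with $f(t_j) = a$.

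Letting $j \to \infty$, we have $0 < t_j < \delta_j \to 0$, so $t_j \to 0+$, and trivially $f(t_j) = a \to a$. This is the required sequence.

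There is essentially no obstacle here: the only point that needs a brief justification is that both the $\liminf$ and $\limsup$ hypotheses actually supply points \emph{in every} right neighborhood of $0$ on which $f$ is respectively small and large, but this is immediate from the definitions of $\liminf$ and $\limsup$ at $0+$. Continuity of $f$ on $(0,1)$ then closes the argument through the intermediate value theorem, without any appeal to the obstacle problem machinery.
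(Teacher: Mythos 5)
Your proof is correct and follows essentially the same route as the paper: pick points near $0$ where $f$ is below $a$ (from the liminf) and above $a$ (from the limsup), then apply the intermediate value theorem to get $f(t_j)=a$ exactly with $t_j\to 0+$. The only cosmetic difference is that you nest the two points inside $(0,\delta_j)$, while the paper simply notes that both auxiliary sequences tend to $0$; either way the argument is complete.
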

\begin{proof}
Fix $ 0<a <A$, and let $ y_j \rightarrow 0+ $ and $ z_j \rightarrow 0+ $ be such that $ \vert f( y_j ) -A \vert \leq \frac{1}{j} $ and $ 0 \leq f(z_j ) \leq \frac{1}{j}$. Taking $ j$ large enough, we insure that $ f(y_j ) >a $ and $ f(z_j ) <a$.  Since 
$   f$ is a continuous function, by intermediate value theorem there exists $ t_j \rightarrow 0+ $, such that $f(t_j)=a$.

\end{proof}

\begin{corollary}
Let $ u$ be the solution to the double obstacle problem with polynomial obstacles $ p^1 \leq p^2$. 
If $ u_{r_j} \rightarrow p^i$ through a subsequence, then the blow-up of $ u$ at the origin is unique.
\end{corollary}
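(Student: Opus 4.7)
Without loss of generality assume $i=1$. My plan is to combine Lemma \ref{small} with an isolation property of $p^1$ among homogeneous degree two global solutions. Define
$\varphi(r) := \lVert u_r - p^1 \rVert_{L^2(B_1)}$
for $r \in (0,1]$. Since $u \in C^{1,1}_{\mathrm{loc}}$ and $p^1$ is homogeneous of degree two, $\varphi$ is continuous on $(0,1]$. By hypothesis $\varphi(r_j) \to 0$, so $\liminf_{r\to 0+}\varphi(r) = 0$.

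The crucial claim is: there exists $\epsilon_0 > 0$ such that every homogeneous degree two global solution $v$ (for the double obstacle problem with $p^1, p^2$) satisfying $\lVert v - p^1 \rVert_{L^2(B_1)} < \epsilon_0$ must equal $p^1$. I would prove this by contradiction. Suppose blow-ups $v_n \neq p^1$ satisfy $v_n \to p^1$ in $C^{1,\alpha}(\overline{B_1})$. Writing $\Delta v_n = \lambda_1 \chi_{A_n} + \lambda_2 \chi_{B_n}$ on $B_1$ with $A_n = \{v_n = p^1\}\cap B_1$ and $B_n = \{v_n = p^2\}\cap B_1$, the divergence theorem and the $C^1$ convergence yield
\begin{equation*}
\lambda_1 |A_n| + \lambda_2 |B_n| \;=\; \int_{B_1} \Delta v_n \,dx \;=\; \int_{\partial B_1} \partial_\nu v_n \,d\sigma \;\longrightarrow\; \int_{\partial B_1} \partial_\nu p^1\, d\sigma \;=\; \lambda_1 |B_1|.
\end{equation*}
Combined with $|A_n| + |B_n| + |N_n| = |B_1|$ (where $N_n = \{p^1 < v_n < p^2\}\cap B_1$) and the signs $\lambda_1 < 0 < \lambda_2$, this forces both $|N_n| \to 0$ and $|B_n| \to 0$; hence $|\{v_n > p^1\} \cap B_1| \to 0$.

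This contradicts the classification of blow-ups from Theorems \ref{theorem1} and \ref{theoremhalfspace}, which imply that every blow-up $v \neq p^1$ has $|\{v > p^1\} \cap B_1| \geq c > 0$ uniformly. Indeed, for $v = p^2$ or a harmonic quadratic polynomial $q$ with $p^1 \leq q \leq p^2$, the set $\{v > p^1\}$ has full measure in $B_1$ (since $q - p^1$ is a nonzero nonnegative quadratic form of trace $-\lambda_1 > 0$); for halfspace solutions Definition \ref{halfspace} gives a noncoincidence halfplane of measure $|B_1|/2$; and for double-cone solutions the harmonic sectors $\mathcal{S}_1, \mathcal{S}_2$, together with the cone $\mathcal{C}_2$, occupy a uniformly positive fraction of $B_1$, because by Theorem \ref{theorem1} each sector has opening angle $\pi/2$ in Case 1 and a fixed positive angle $\vartheta$ with $\cos^2\vartheta \in (0,1)$ in Case 2 (Case 3 having no double-cone solutions). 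The isolation claim follows.

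Granting isolation, assume for contradiction the blow-up is not unique. Then some blow-up $v_0 \neq p^1$ arises as $u_{\tilde r_k} \to v_0$, so $\limsup_{r \to 0+} \varphi(r) \geq \lVert v_0 - p^1 \rVert_{L^2(B_1)} \geq \epsilon_0$. Lemma \ref{small} applied with $a = \epsilon_0/2$ produces a sequence $t_k \to 0+$ along which $\varphi(t_k) = \epsilon_0/2$. By the $C^{1,1}$ bound a subsequence of $u_{t_k}$ converges in $C^{1,\alpha}(\overline{B_1})$ to a blow-up $v_1$ with $\lVert v_1 - p^1\rVert_{L^2(B_1)} = \epsilon_0/2 < \epsilon_0$, and the isolation claim forces $v_1 = p^1$, contradicting $\epsilon_0/2 > 0$. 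The main obstacle in this plan is the uniform measure lower bound $|\{v > p^1\}| \geq c$ for the (possibly infinite) family of double-cone solutions as the sector parameters vary, which is handled using the explicit angle descriptions given in Theorem \ref{theorem1}.
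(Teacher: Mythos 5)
Your proof is correct, and it reaches the conclusion by a genuinely different key mechanism than the paper. The paper's (very terse) proof also works with $f(r)=\lVert u_r-p^1\rVert_{L^2(B_1)}$ and Lemma \ref{small}, but it excludes all blow-ups other than $p^1$ and $p^2$ via the Weiss energy characterisation \eqref{wlimit}: every blow-up carries the same energy $W(u,0+,0)=W(p^i,1,0)$, and this value separates the polynomials $p^1,p^2$ from harmonic polynomials and from halfspace and double-cone solutions, so $f$ has only the two limit points $0$ and $\lVert p^2-p^1\rVert_{L^2(B_1)}$, which contradicts the continuum of limit points that Lemma \ref{small} would otherwise produce. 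You instead prove that $p^1$ is isolated in $L^2(B_1)$ among homogeneous blow-ups, using the divergence theorem to show that $C^{1}$-closeness to $p^1$ forces $|\{v>p^1\}\cap B_1|\to 0$, and the classification of Theorems \ref{theorem1} and \ref{theoremhalfspace} to obtain the uniform lower bound $|\{v>p^1\}\cap B_1|\ge c>0$ for every blow-up $v\neq p^1$ (the only delicate family being the double-cone solutions, where the fixed opening angle of the harmonic sectors indeed gives uniformity, namely angle $\pi/2$ for each sector in Case 1 and a single fixed $\vartheta$ in Case 2). Both arguments then finish identically with the intermediate-value Lemma \ref{small}. What your route buys is independence from the monotonicity formula and from the explicit energy values, which the paper only computes in the normalised Case 1; what it costs is the case-by-case measure estimate over the entire blow-up classification, whereas the energy argument dispatches all non-polynomial blow-ups at once. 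One small point to make explicit: in your isolation claim the sequence $v_n$ is only assumed $L^2$-close to $p^1$, so you should record that the uniform $C^{1,1}$ bound on homogeneous global solutions upgrades this to $C^{1,\alpha}(\overline{B_1})$ convergence along a subsequence before applying the divergence theorem on $\partial B_1$.
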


\begin{proof}
Let $ f (r):= \Arrowvert u_r - p^1 \rVert_{L^2(B_1)}$.  If there exists a sequence $ s_j \rightarrow 0+$, such that $ u_{s_j} \rightarrow p^2$. 
Then the function $f $ satisfies the assumptions in Lemma \ref{small}, but $f$ has only two limit points.
Therefore there exists $ \lim_{r \rightarrow 0+}f(r)=0$.
\end{proof}

\bigskip

\section{Uniqueness of blow-ups, Case 2}

The uniqueness of blow-ups in \textit{Case 2)} follows from Theorem \ref{theorem1} and from Lemma \ref{small}.

\begin{theorem} \label{case2uniqueness}
Let $ u $ be a solution to the double obstacle problem  with obstacles 
\begin{equation*}
  p^1( x) = a_1 x_1^2+ c_1 x_2^2~\textrm{ and } ~ p^2( x) = a_2 x_1^2 + c_2 x_2^2,
\end{equation*}
satisfying \eqref{lambda12} and \eqref{a2a1} and assume that $ (a_1+c_2)(c_1+ a_2) < 0$.  If
$ u_{r_j}\rightarrow \mu$ for a  subsequence $ r_j \rightarrow 0+$, where $ \mu $ is a double-cone solution, then 
$u $ has a unique blow-up at the origin;
\begin{equation}
\frac{u(rx) }{r^2} \rightarrow \mu(x).
\end{equation}

\end{theorem}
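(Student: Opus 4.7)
The plan is to combine Weiss' monotonicity with the finiteness of double-cone solutions in Case 2 (Theorem \ref{theorem1}) and then run exactly the intermediate-value argument of Lemma \ref{small} used in the Corollary just above this theorem. By Lemma \ref{lemmaweiss}, the limit $W(u,0+,0) := \lim_{r \to 0+} W(u,r,0)$ exists, and since $u_{r_j} \to \mu$ it equals $W(\mu,1,0) =: E$. Consequently every blow-up limit $\tilde\mu$ of $u$ at the origin is a homogeneous degree two global solution sharing this Weiss energy $E$.

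Next I would show that the set $\mathcal{M}$ of possible blow-up limits of $u$ at the origin is finite. By Theorem \ref{theorem1} Case 2, there are exactly four double-cone solutions $\mu_1,\mu_2,\mu_3,\mu_4$, all with Weiss energy $E$ by the symmetry of the construction. Every other homogeneous degree two global solution is either a harmonic polynomial, one of the constants $p^1,p^2$, or a halfspace solution. A halfspace blow-up is excluded by Theorem \ref{remarkhalfspace}: it would force $u_r$ to converge uniquely to that halfspace, contradicting $u_{r_j} \to \mu$. A $p^i$ blow-up is excluded analogously by the Corollary immediately preceding this theorem. A harmonic polynomial $q$ has $W(q,1,0)=0$ by \eqref{Weissu0} (its coincidence sets have measure zero), so this case arises only if $E = 0$; in the generic Case 2 setting a direct computation via \eqref{Weissu0} gives $E \neq 0$, and in the degenerate subcase $E = 0$ a flatness-improvement argument modelled on Section 4 excludes such blow-ups. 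Hence $\mathcal{M} \subseteq \{\mu_1,\mu_2,\mu_3,\mu_4\}$.

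With $\mathcal{M}$ finite, set $f(r) := \|u_r - \mu\|_{L^2(B_1)}$, a continuous function on $(0,1)$ (continuity following from $u \in C^{1,1}_{loc}$) satisfying $\liminf_{r \to 0+} f(r) = 0$. If $u_r$ did not converge to $\mu$, then $A := \limsup_{r \to 0+} f(r) > 0$, so by Lemma \ref{small} for every $a \in (0,A)$ there exists $t_k \to 0+$ with $f(t_k) = a$. Extracting a convergent subsequence $u_{t_k} \to \tilde\mu \in \mathcal{M}$ yields $a = \|\tilde\mu - \mu\|_{L^2(B_1)}$, forcing $a$ to belong to the finite set $\{\|\mu_i - \mu\|_{L^2(B_1)}\}_{i=1}^4$. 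Since $a$ ranges over an entire interval $(0,A)$, this is a contradiction, so $u_r \to \mu$ as $r \to 0+$.

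The main obstacle is the second step, specifically ruling out harmonic polynomial blow-ups when $E$ happens to vanish; the halfspace and $p^i$ cases are eliminated cleanly by the already-established uniqueness theorems, but harmonic polynomial limits may require either a direct Weiss-energy computation in Case 2 showing $E \neq 0$, or a separate flatness-improvement argument tailored to a harmonic quadratic blow-up. Once $\mathcal{M}$ is known to be finite the remainder is soft and formal, relying only on Weiss monotonicity, continuity of $f$, and the one-variable Lemma \ref{small}.
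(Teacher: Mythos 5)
Your proposal is correct and follows essentially the same route as the paper: the paper's proof also sets $f(r)=\lVert u_r-\mu\rVert_{L^2(B_1)}$ and combines Lemma \ref{small} with the fact that Theorem \ref{theorem1} leaves only four double-cone solutions in \textit{Case 2}, so that $f$ has finitely many limit points and hence must tend to $0$. You are in fact more careful than the paper, which simply asserts the finiteness of the set of limit points; your separate exclusion of halfspace blow-ups via Theorem \ref{remarkhalfspace}, of $p^i$ blow-ups via the preceding corollary, and of harmonic polynomial blow-ups via the Weiss energy \eqref{Weissu0} supplies precisely the details the paper leaves implicit (the residual subcase $E=0$ that you flag is likewise not addressed in the paper's own proof).
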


\begin{proof}
The proof follows from Lemma \ref{small}. Assume that $ u_{r_j} \rightarrow \mu$ for a  subsequence $ r_j \rightarrow 0+$, and denote by
$ f (r):= \Arrowvert u_r - \mu \rVert_{L^2(B_1)}$.  If there exists a sequence $ s_j \rightarrow 0+$, such that $ u_{s_j} \rightarrow \mu_1$, and $ \mu_1  \neq \mu$. 
Then the function $f $ satisfies the assumptions in Lemma \ref{small}, but according to Theorem \ref{theorem1}, $f$ has only four limit points.
Therefore there exists $ \lim_{r \rightarrow 0+}f(r)=0$.
\end{proof}

Now let us discuss the speed of the convergence $   \lVert u_r-\mu \rVert_{L^2(B_{1})} \rightarrow 0$ as $r\rightarrow 0$, and the regularity of the free boundary.

\begin{remark}
Let $u $ be a solution to the double obstacle problem in \textit{Case 2}, and assume that  $u_{r_j} \rightarrow \mu$, where $\mu$ is a double-cone solution.
 Arguing as we did in the  proof of Theorem \ref{theorem2}, and employing Theorem \ref{case2uniqueness}, we can show  that the free boundary is a 
union of four $ C^{1}$-curves. Although we do not have a uniform estimate  like \eqref{uniformc1alpha} yet.
\end{remark}

With a modification of our flatness improvement argument, we can show uniform $ C^{1,\gamma}$-regularity of $\varGamma_i$ up to the origin also in \textit{Case 2}.

\begin{theorem}
Let $ u $ be a solution to the double obstacle problem  in $B_1 $ with obstacles 
  $p^i( x) = a_i x_1^2+ c_i x_2^2$,
satisfying \eqref{lambda12} and \eqref{a2a1} and assume that $ (a_1+c_2)(c_1+ a_2) < 0$. 
Let  $ u_r \rightarrow \mu$, where $\mu$ is a double-cone solution. Denote by $ \vartheta $ the opening angle of $ \mathcal{S}_i$. Then
\begin{equation} \label{case2uniform}
  \lVert u_r-\mu \rVert_{L^2(B_{1})} \leq  C r^\gamma \lVert  u-\mu \rVert_{L^2(B_{2})},
  \end{equation}
where $ 0< \gamma <1$ depends on $ \vartheta$. Furthermore, the free boundary $ \varGamma_u$ is a union of four $ C^{1,\gamma}$-curves.
\end{theorem}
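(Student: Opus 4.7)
The plan is to adapt the flatness-improvement scheme of Section 4, with one major simplification: the blow-up $\mu$ is unique (Theorem \ref{case2uniqueness}) and Case 2 admits only four discrete double-cone solutions (Theorem \ref{theorem1}), so no continuous family of minimal representatives exists. I would therefore iterate with $\mu^j\equiv\mu$ fixed throughout, which kills the correction terms $\|\mu^{k+1}-\mu^k\|$ that complicated Proposition \ref{important}. For a sequence $u^j=u_{r_j}$ with $r_j\to 0$, set $\delta_j:=\|u^j-\mu\|_{L^2(B_2)}$ and $v^j:=(u^j-\mu)/\delta_j$; Lemma \ref{easylemma} bounds $v^j$ in $W^{1,2}(B_1)$, and the $C^{1,\alpha}$-convergence $u^j\to\mu$ combined with the maximum growth lemma of \cite{Caf98} (as in Proposition \ref{vjvo}) forces $v^j\equiv 0$ on compacta of $\mathcal{C}_1\cup\mathcal{C}_2$. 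Along a subsequence $v^j\to v^0$ in $L^2(B_1)$, with $v^0$ vanishing on $\mathcal{C}_1\cup\mathcal{C}_2$ and harmonic in each noncoincidence sector $\mathcal{S}_i$.

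The essential new ingredient is the scaling estimate for $v^0$. In polar coordinates adapted to $\mathcal{S}_i$ of opening $\vartheta$, the zero-trace condition on $\partial\mathcal{S}_i$ produces the Fourier expansion
\[
v^0(r,\theta)=\sum_{k=1}^\infty A_k^{(i)}\,r^{k\pi/\vartheta}\sin(k\pi\theta/\vartheta),
\]
and hence $\|v^0(s\,\cdot)\|_{L^2(B_1)}\le s^{\pi/\vartheta}\|v^0\|_{L^2(B_1)}$ for every $0<s<1$. In Case 1 the minimality mechanism gave orthogonality to the $k=1$ mode and promoted the exponent to $4$; here that mode survives because $\mu$ cannot be rotated, but the Case 2 hypothesis $(a_1+c_2)(a_2+c_1)<0$ forces $\vartheta\ne\pi/2$, and the inequality $(a_2-a_1)(c_2-c_1)>0$ (immediate from \eqref{a2a1}) keeps $\vartheta$ strictly away from $0$ and $\pi$. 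Therefore $\pi/\vartheta>1$ uniformly in the data, which is exactly the strict improvement over the linear factor $s$ needed for the contradiction argument to close.

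Running the compactness-contradiction of Corollary \ref{key} with $s^\alpha$ in place of $s$ (for any fixed $1<\alpha<\pi/\vartheta$) yields
\[
\|u_s-\mu\|_{L^2(B_1)}\le s^\alpha\|u-\mu\|_{L^2(B_1)}+\varepsilon\|u-\mu\|_{L^2(B_2)}
\]
whenever $\|u-\mu\|_{L^2(B_2)}$ is sufficiently small depending on $\varepsilon,s,\alpha$. Iterating as in Proposition \ref{important} with $\tau=s^\gamma$, now free of the $\mu^{k+1}-\mu^k$ terms, produces \eqref{case2uniform} for any $0<\gamma<\min\{1,\alpha\}$; the dependence $\gamma=\gamma(\vartheta)$ in the statement enters through the bound $\gamma<\pi/\vartheta$. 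The $C^{1,\gamma}$-regularity of the four pieces of $\varGamma_u$ then follows verbatim from Steps~1--2 of Theorem \ref{theorem2}: the growth lemma confines each $\varGamma_i^\pm$ to a thin cone about the corresponding ray of $\partial\mathcal{C}_i$; in a ball $B_{cd}(x_0)$ with $d=|x_0|$ the double obstacle problem reduces to a single obstacle problem for which $\mu$ is a halfspace solution (the identity analogous to \eqref{nu0mu} follows from the computation $\alpha+c_1=\lambda_1/(2(1+m^2))$ contained in Lemma \ref{lemmaqs}, and similarly for $p^2$); combining Theorem~8.1 and Corollary~8.1 of \cite{JA} with \eqref{case2uniform} delivers $|\nu(x_0)-\nu(0)|\le C|x_0|^\gamma\|u-\mu\|_{L^2(B_2)}$ uniformly on each of the four pieces.

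The main obstacle I expect is ensuring the Fourier-based scaling estimate is legitimate across both sectors simultaneously: the $L^2$-norms over the disjoint sectors add, so the worst exponent $\pi/\vartheta$ governs, but one must verify that the vanishing of $v^j$ on compacta of $\mathcal{C}_i$ (from the growth lemma) combined with the weak $W^{1,2}$-bound actually upgrades to a zero trace on all of $\partial\mathcal{S}_i$, including at the vertex, so that the series representation is valid on each sector. In Case 1 this was hidden inside Lemma \ref{lemmav0}, where the orthogonality property also disposed of the leading mode; here the argument must be redone from scratch in the less symmetric geometry of Case 2, and the resulting constants in the iteration (in particular the $\delta$ threshold in Corollary \ref{key}) depend on $\vartheta$ through the margin $\pi/\vartheta-\alpha$, which degenerates as $\vartheta\to\pi$ — this is what the hedge ``$\gamma$ depends on $\vartheta$'' in the statement accommodates.
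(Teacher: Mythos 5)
Your overall architecture (fix $\mu$ instead of re-minimising, pass to the normalised limit $v^0$, expand in Fourier modes on each sector, feed the resulting decay into the iteration of Corollary \ref{key}--Proposition \ref{important}) is the same as the paper's, but there is a genuine gap in the quantitative heart of the argument: you have normalised the scaling estimate against the wrong power of $s$. In Corollary \ref{key} the rescaling is $u_s(x)=u(sx)/s^2$, so the quantity that must contract is $\lVert v^0_s\rVert=\lVert v^0(s\cdot)\rVert/s^2$. A mode $r^{\alpha_k}$ with $\alpha_k=k\pi/\vartheta$ therefore contributes a factor $s^{\alpha_k-2}$, not $s^{\alpha_k}$, and the threshold for improvement is $\pi/\vartheta>2$, i.e.\ $\vartheta<\pi/2$ --- not $\pi/\vartheta>1$ as you claim. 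When $\pi/2<\vartheta<\pi$ (which Theorem \ref{theorem1} permits, since only $\cos^2\vartheta\in(0,1)$ is guaranteed), the leading mode $r^{\pi/\vartheta}$ is subquadratic and \emph{grows} under the normalised rescaling; your estimate $\lVert v^0(s\cdot)\rVert\le s^{\pi/\vartheta}\lVert v^0\rVert$ then gives $\lVert v^0_s\rVert\le s^{\pi/\vartheta-2}\lVert v^0\rVert$ with a negative exponent, and the compactness--contradiction argument does not close. Even in the favourable range $\vartheta<\pi/2$ your stated exponent $\gamma<\min\{1,\alpha\}$ with $\alpha<\pi/\vartheta$ overshoots: the correct decay rate is $\gamma<\pi/\vartheta-2$.

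You correctly observe that the Case~1 mechanism for killing the leading mode (orthogonality coming from minimising over a continuous family of double-cone solutions) is unavailable here, but you do not replace it with anything, and this is exactly the missing idea. The paper's proof splits into the two subcases $\vartheta<\pi/2$ and $\vartheta>\pi/2$; in the second it shows $A^i_1=B^i_1=0$ by a separate argument that exploits the already-established uniqueness of the blow-up (Theorem \ref{case2uniqueness}): if the subquadratic mode were present, then $\lVert u_{s^k}-\mu\rVert_{L^2(B_2)}$ would be forced to grow geometrically by a factor $c\tau^{\alpha_1-2}>1$ at each step, contradicting the fact that it stays below any $\varepsilon$ for $k$ large. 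Only after this does one obtain the contraction with $\kappa=2\pi/\vartheta-2>0$. Your Step~2 (localisation to a single obstacle problem near each branch of $\varGamma_i$ and the appeal to \cite{JA}) is fine once the decay \eqref{case2uniform} is in hand, but as written the proposal proves the decay for at most half of the admissible opening angles.
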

\begin{proof}
We provide only a brief sketch of the proof, since the detailed proof would be quite long, and  very similar to the proofs of \mbox{Proposition \ref{important}} and of Theorem \ref{theorem2}. Therefore we focus on the main differences of double-cone solutions in  \textit{Cases 1} and \textit{2}.
Let 
\begin{equation}
v_r :=\frac{u_r-\mu }{  \lVert u_r-\mu \rVert_{L^2(B_{2})}} ,
\end{equation}
where $\mu$ is a double-cone solution.
Then $ v_r \rightarrow v^0$ through a subsequence $ r_j \rightarrow 0+$ weakly in $W^{1,2}(B_1)$ and strongly in $L^2(B_1)$. Here $ \Delta v^0=0$ in $ \mathcal{S}_i$, and 
$ v^0=0$ in $ \mathbb{R}^2\setminus (\mathcal{S}_1 \cup \mathcal{S}_2)$, see the proof of Proposition \ref{vjvo}.
Hence
\begin{equation} \label{case2voa1b1}
v^ 0(r,\theta)=\sum_{k=1}^\infty r^{\alpha_k} \left(  A^i_k \cos{\alpha_k \theta} + B^i_k \sin{\alpha_k \theta}\right) \textrm{ in } \mathcal{S}_i, ~ i=1,2,
\end{equation}
where $\alpha_k =\frac{\pi k}{ \vartheta }$ and $\vartheta$ is the opening angle of the cone $\mathcal{S}_i$ for $i=1,2$.
According to Theorem \ref{theorem1}, $0<\cos^2{\vartheta} <1$, hence there are two possible cases; either $i)~0< \vartheta <\pi/2$ or  $ii) ~\pi/2< \vartheta <\pi$.
We discuss these two  cases separately.
\par
$i)~0< \vartheta <\pi/2$, then  $ \alpha_k= \pi k/\vartheta > 2k$, for all $k=1,2,....$.
It follows that 
\begin{equation} \label{kappa2}
   \lVert v_s^0 \rVert_{L^2(B_1)} \leq s^\kappa \lVert  v_0 \rVert_{L^2(B_1)},
   \end{equation}
     where $ \kappa = \pi/\vartheta -2$, and a standard iteration argument leads to \eqref{case2uniform} with 
$0< \gamma<\kappa$.

\par
$ii) ~\pi/2< \vartheta <\pi$, then $ 1<\alpha_1=\pi/ \vartheta <2$ and $\alpha_2=2\pi/\vartheta   >2$. We will obtain 
\eqref{kappa2}
    with $ \kappa = 2\pi/\vartheta -2$, if we show that $ A^i_1=B^i_1=0$ in \eqref{case2voa1b1}.
Assume not, i.e. if  $ |A^1_1|+|B^1_1|>0$, then $ ||v^0_s||_{L^2(B_1)} \geq c s^{\alpha_1-2}$, where $c>0$ is a fixed constant depending on $A^1_1,B^1_1$ and $0 <s<1$ is any number. Since $ v^j:=v_{r_j}  \rightarrow v^0$ in $ L^2(B_1)$,
then for any fixed $ 1>\tau >s>0$ there exist $ \varepsilon >0$ small, such that
\begin{equation} \label{usvscase2}
 \lVert u_s-\mu \rVert_{L^2(B_{1})} \geq c \tau^{\alpha_1-2}  \lVert u-\mu \rVert_{L^2(B_{2})}, 
\end{equation}
 provided $  \lVert u-\mu \rVert_{L^2(B_{2})} \leq \varepsilon$. Indeed, if the last statement is not true, then there exist $ 1>\tau >s>0$, 
  and a sequence  $ u^j$, such that $   \lVert u^j -\mu \rVert_{L^2(B_{2})}\leq \varepsilon_j \rightarrow 0+$, but 
 \begin{equation} \label{ujsvscase2}
\frac{ \lVert u^j_s-\mu \rVert_{L^2(B_{1})} }{  \lVert u^j-\mu \rVert_{L^2(B_{2})}}< c \tau^{\alpha_1-2} .
\end{equation}
After passing to the limit as $ j \rightarrow \infty$ in \eqref{ujsvscase2}, we obtain $ ||v^0_s||_{L^2(B_1)} \leq c \tau^{\alpha_1-2}$, contradicting 
$ ||v^0_s||_{L^2(B_1)} \geq c s^{\alpha_1-2}$.

 \par
 According to  Theorem \ref{case2uniqueness}  for any $ \varepsilon >0$ small there exists $k_\varepsilon$ such that 
 $  \lVert u_{s^k}-\mu \rVert_{L^2(B_{2})}\leq \varepsilon $ for all $ k \geq k_\varepsilon$.
 The latter together with \eqref{usvscase2} implies that for any positive integer $ m$
 \begin{align*}
 \varepsilon \geq  \lVert u_{s^{k+m}}-\mu \rVert_{L^2(B_{2})} \geq c\tau^{\alpha_1-2} \lVert u_{s^{k+m-1}}-\mu \rVert_{L^2(B_{2})} \\
 \geq (\tau^{\alpha_1-2}c)^2  \lVert u_{s^{k+m-2}}-\mu \rVert_{L^2(B_{2})}\geq...\geq (\tau^{\alpha_1-2}c)^m  \lVert u_{s^{k_\varepsilon}}-\mu \rVert_{L^2(B_{2})}.
 \end{align*}
 Recalling that $ \alpha_1 <2$, and taking $1 >\tau >s>0$ small, we obtain a contradiction, when letting $ m \rightarrow \infty$. Hence $ |A^i_1|=|B^i_1|=0$,
 and \eqref{kappa2} holds with $ \kappa= \alpha_2-2$. An iteration argument similar to the one used in Corollary \ref{key} and Proposition \ref{important} leads to the following inequalities,
 $ \lVert u_{s^k}  -\mu \rVert_{L^2(B_{1})} \leq \tau^{k \gamma} \lVert u-\mu   \rVert_{L^2(B_{2})} $, whith $1>\tau >s$ and 
 $ \lVert u_{r} -\mu  \rVert_{L^2(B_{1})} \leq C r ^{ \gamma} \lVert u-\mu   \rVert_{L^2(B_{2})}$, where $ 0<\gamma < \alpha_2-2$, and $ 0<r<1$ small.  Applying the proof of Theorem \ref{theorem2}, we deduce  that $ \varGamma_i$ is Lipschitz and consists of two $C^{1,\gamma}$-curves, meeting at the origin.

 \end{proof}

\par
Now let us briefly  discuss the uniqueness of halfspace blow-up limits. 
Although in \textit{Case 2}, the lines $\varGamma_i$ are not rotationally invariant on the plane, but they are rotationally invariant inside a fixed cone, depending on the given obstacles.
Hence we can define minimal halfspace solutions in this case as well, and obtain the uniqueness of blow-ups (see Theorem \ref{remarkhalfspace}).

\bigskip

\section{Uniqueness of blow-ups, Case 3}

Consider the double obstacle problem with obstacles
\begin{equation*}
p^1(x)=a_1x_1^2+c_1x_2^2  ~\textrm{ and }  ~p^2(x)=a_2x_1^2+c_2x_2^2,
\end{equation*}
satisfying \eqref{a2a1}, \eqref{lambda12},
and let $ P $ be the polynomial in \eqref{p1+p2}.

\par
In \textit{ Case 3}, the polynomial $ P$ has a sign, and according to Theorems \ref{theorem1} and \ref{theoremhalfspace} we have only halfspace solutions.
 Without loss of generality we may assume that $ P \geq 0$. According to Theorem \ref{theoremhalfspace} there are infinitely many rotational invariant halfspace solutions corresponding to $ p^1$, and we can apply our flatness improvement argument (Theorem \ref{remarkhalfspace}) in order to show the uniqueness of blow-ups and $ C^{1,\gamma}$-regularity of $\varGamma_1$.

\bigskip

\section{An example of a double-cone solution in $\mathbb{R}^3$ }

Let $ u_0$ be a homogeneous global solution to the double obstacle problem with obstacles $ p^1\leq p^2 $ in $ \mathbb{R}^3$.
As usually we assume that the origin is a free boundary point, $p^1(0)=p^2(0)=0$, and we want to understand the behaviour  of the free boundary at the origin.
We split the discussion into three cases.

\par
If  $ p^1=p^2$ on a plane, then we obtain only halfspace solutions.
If $ p^1=p^2$ on a line, then we can analyse the possible blow-up solutions, based on our results 
obtained in dimension  $n=2$. In particular, we can see that in this case there are no three-dimensional double-cone solutions.
The proofs of the last statements can be obtained via a dimension reduction technique. However, we omit the proofs, since our aim is to find a three-dimensional double-cone solution.

\par
Our knowledge on the existence of double-cone solutions in dimension two suggest that we may obtain three-dimensional double-cone solutions, assuming  that $ p^1$ and $ p^2$ meet only at a single point. 
However, since in dimension $ n=3$ homogeneous degree two harmonic functions are 
not necessarily polynomials, the analysis is much more complicated.
In this section we give an example of a three-dimensional  double-cone solution, symmetric with respect to the $z$-axes and the $(x,y)$-plane.

\subsection{Solutions symmetric with respect to the $z$-axes}

Let $ p^1 \leq p^2 $ be given homogeneous degree two polynomials, meeting only at the origin.
We are looking for two closed cones 
$\mathcal{C}_1$, $\mathcal{C}_2$ and for a harmonic homogeneous degree two function $q$ in 
$ \mathbb{R}^3\setminus {(\mathcal{C}_1 \cup \mathcal{C}_2)}$, such that 
\begin{equation} \label{3Ddc}
\Delta q =0, ~p^1 <q <p^2 \textrm{ in } \mathbb{R}^3\setminus {(\mathcal{C}_1 \cup \mathcal{C}_2)}, 
\end{equation}
and
\begin{equation} \label{3Dbc}
q-p^1= |\nabla q-\nabla p^1|=0 \textrm{ on } \partial \mathcal{C}_1,
\textrm{ and } q-p^2= |\nabla q-\nabla p^2|=0 \textrm{ on } \partial \mathcal{C}_2.
\end{equation}

\par
 Let $ (r, \phi, \theta )$ represent the spherical coordinates in $ \mathbb{R}^3$,
\begin{equation*}
x=r \cos\phi \sin \theta, ~y= r\sin \phi \sin \theta, ~z=r\cos \theta, \textrm{ where } r \geq0, 0 \leq \phi <2\pi, 0 \leq \theta \leq \pi ,
\end{equation*}
then $q(r,\phi,\theta)= r^2 \zeta(\phi, \theta)$ by homogenuity. Furthermore, assume that $ q$ is  symmetric with respect to the $ z$-axes, i.e. $ \zeta(\phi, \theta)= \zeta(\theta)$.
Using the expression for the Laplace operator in spherical coordinates, we obtain the 
following ordinary differential equation for $ \zeta$,
\begin{equation} \label{Aode}
\zeta^{\prime \prime}+\frac{\cos\theta}{\sin \theta}\zeta^{\prime}+6 \zeta=0.
\end{equation}
We can see via a substitution that $ \zeta_1= 1+3\cos2\theta$ is a solution to \eqref{Aode}. Using reduction of order, we get another solution, 
$ \zeta_2 = 3 \cos\theta+\frac{1+3\cos 2 \theta}{4} \ln \frac{1-\cos\theta}{1+\cos\theta} $.
Thus the general solution to \eqref{Aode} is given by 
\begin{equation*}
\zeta= A( 1+3\cos2\theta)+B \left(    3 \cos\theta+\frac{1+3\cos 2 \theta}{4} \ln \frac{1-\cos\theta}{1+\cos\theta}  \right),
\end{equation*}
where $ A, ~B$ are real numbers.
Hence
\begin{equation}
q= Ar^2( 1+3\cos2\theta)+Br^2 \left(    3 \cos\theta+\frac{1+3\cos 2 \theta}{4} \ln \frac{1-\cos\theta}{1+\cos\theta}  \right).
\end{equation}

\par
We are looking for a solution to the double obstacle problem, symmetric with respect to the $ z$-axes. Hence we assume that the obstacles $ p^1$ and $ p^2$ are so, 
i. e. they do not depend on $ \phi$.
Let $ t:=\cos\theta $, $ t \in (-1,1)$, and take  
\begin{equation} \label{3Dpol}
p^1= (a_1+b_1t^2)r^2,  ~~ p^2=(a_2+b_2t^2)r^2, ~ \textrm{ where }  a_1<a_2,  ~ a_2-a_1+b_2-b_1>0,
\end{equation}
then $p^1\leq p^2$,   $p^1$ and $ p^2$ meet only at the origin. 
Also, observe that 
\begin{equation}
q=Ar^2(3t^2-1)+Br^2 \left( 3t+\frac{3t^2-1}{2}\ln \frac{1-t}{1+t}\right),
\end{equation}
for some $A, B$.
In order to construct an example of a double-cone solution, we want to find  $ A, B, a_i, b_i $ and $ 1> t_1 > t_2>-1 $, such that 
\begin{equation*}
\begin{aligned}
f_1(t):=\frac{q-p^1}{r^2}= A(3t^2-1)-a_1-b_1t^2+ B\left( 3t+\frac{3t^2-1}{2}\ln \frac{1-t}{1+t}\right) \geq 0, \\
f_2(t):=\frac{p^2-q}{r^2}= a_2+b_2t^2-A(3t^2-1)-B\left( 3t+\frac{3t^2-1}{2}\ln \frac{1-t}{1+t}\right)\geq 0, \textrm{ for }  t_2 \leq t\leq t_1,
\end{aligned}
\end{equation*}
and 
\begin{equation}\label{f12prime}
f_1(t_1)=f^{\prime}_1(t_1)=0,~~f_2(t_2)=f^{\prime}_2(t_2)=0.
\end{equation}

\par 
Let us rewrite 
\begin{equation}
\begin{aligned}
f_1(t)= t^2(3A-b_1)-a_1-A+ B\left( 3t+\frac{3t^2-1}{2}\ln \frac{1-t}{1+t}\right), \textrm{ and } \\
f_2(t)= t^2(-3A+b_2)+a_2+A-B\left( 3t+\frac{3t^2-1}{2}\ln \frac{1-t}{1+t}\right).
\end{aligned}
\end{equation}

\par 
Denote by
\begin{equation} 
g(t):=3t+\frac{3t^2-1}{2}\ln \frac{1-t}{1+t}, \textrm{ for } t \in (-1,1),
\end{equation}
and observe that $ g $ is an odd function.
From \eqref{f12prime} we  obtain the following system;
\begin{equation*}
\begin{cases}
      &f_1(t_1)= t_1^2(3A-b_1)-a_1-A+ Bg(t_1)=0 \\
      &f_1^\prime(t_1)= 2t_1(3A-b_1)+ Bg{^\prime}(t_1)=0\\
      &f_2(t_2)=t_2^2(-3A+b_2)+a_2+A-Bg(t_2)=0 \\
       & f_2^\prime(t_2)= 2t_2(-3A+b_2)-Bg^\prime(t_2)=0.
       \end{cases}
\end{equation*}
In order to simplify the case further, we assume that the free boundary of the desired 
 double-cone solution is symmetric with respect to the $(x,y)$-plane. Hence  $ t_1=-t_2 =t_0 \neq 0$, and 
\begin{equation}\label{systemt0}
\begin{cases}
      & t_0^2(3A-b_1)-a_1-A+ Bg(t_0)=0 \\
      & 2t_0(3A-b_1)+ Bg{^\prime}(t_0)=0\\
      &t_0^2(-3A+b_2)+a_2+A+Bg(t_0)=0 \\
       & -2t_0(-3A+b_2)-Bg^\prime(t_0)=0,
       \end{cases}
\end{equation}
where we used that $ g$ is an odd function, and $g^\prime$ is an even function.
The system \eqref{systemt0} is equivalent to the following system
\begin{equation} \label{systemAB}
\begin{cases}
      & 2A+a_1+a_2=0  \\
      & 6A-b_1-b_2=0 \\
      & t_0^2(b_2-b_1)+a_2-a_1+2 Bg(t_0)=0 \\
      & t_0(b_2-b_1)+ Bg{^\prime}(t_0)=0
       \end{cases}
\end{equation}
Now let us take $ b_1=b_2=b$, then $ 3A-b_1=-3A+b_2=0$ and
\begin{equation*}
\begin{aligned}
f_1(t)= -a_1-A+ Bg(t)= \frac{a_2-a_1}{2}+Bg(t), ~ f_2(t)= a_2+A-Bg(t)=\frac{a_2-a_1}{2}-Bg(t).
\end{aligned}
\end{equation*}
On the other hand, the assumption $b_1=b_2$ implies that $ B g^\prime({t_0})=0$,
and  $ a_2-a_1 =- 2Bg(t_0)>0$, hence $ B \neq 0$, and $  g^\prime({t_0})=0$. 
It is easy to verify that $ t_0 $ is unique in the interval $(0,1)$, $ g(t_0) >0$, and $ g ^\prime (t ) \geq 0$, for $ -t_0 \leq t \leq t_0$. Hence $ g$ is a monotone increasing function 
in the interval $ (-t_0,t_0)$, and therefore
\begin{equation} \label{f1f2}
\begin{aligned}
f_1(t)= \frac{a_2-a_1}{2g(t_0)}\left( g(t_0)- {g(t)} \right) \geq 0 \textrm{ and } \\
f_2(t)= \frac{a_2-a_1}{2g(t_0)}\left(g(t_0)+ {g(t)} \right)=\frac{a_2-a_1}{2g(t_0)}\left(-g(-t_0)+ {g(t)} \right)\geq 0 .
\end{aligned}
\end{equation}
Now we can write an explicit example.

\begin{example} \label{3Dexample}
Let 
\begin{equation*}
p^1(r,\phi,\theta)= -r^2, ~ p^2(r,\phi, \theta)= r^2,
\end{equation*}
in spherical coordinates, and 
\begin{equation*}
q(r,\phi,\theta)=-\frac{r^2 g(\cos\theta)}{g(t_0)},
\end{equation*}
where $ g(t)=3t+\frac{3t^2-1}{2}\ln \frac{1-t}{1+t}$ is defined for $ t \in (-1,1)$, and $ 0<t_0<1$ is chosen so that $ g ^\prime(t_0)=0$.
Then 
\begin{equation}\label{doublecone3}
u=
\begin{cases}
     p^1 & \mbox{if } ~0 \leq   \theta \leq \arccos(t_0)\\
      q & \mbox{if } ~ \arccos(t_0) \leq \theta \leq \pi -\arccos(t_0)\\
       p^2 & \mbox{if }~\pi-\arccos(t_0) \leq \theta \leq \pi
       \end{cases}
\end{equation}
is a double-cone solution to the double obstacle problem with obstacles $ p^1, p^2$.
\end{example}

\begin{proof}
Let $p^1=-r^2$ and $p^2=r^2$, then  $ a_1=-a_2=1$ and $ b_1=b_2=0$ in \eqref{3Dpol}.
Hence we obtain from \eqref{systemAB} that $  A=0,~ B=-1/g(t_0)$, and  $ q=-r^2 \frac{g(\cos \theta)}{g(t_0)}$.
Denote by 
\begin{equation}
\mathcal{C}_1:= \{ (r,\phi,\theta):  0 \leq \theta \leq \arccos(t_0) \}
\textrm{ and }
\mathcal{C}_2:= \{ (r,\phi,\theta):  \pi-\arccos(t_0)  \leq \theta \leq \pi \}.
\end{equation}
Then $q(r,\phi,\theta)$ is a harmonic function in  $\mathbb{R}^3\setminus (\mathcal{C}_1 \cup  \mathcal{C}_2)$, satisfying \eqref{3Ddc} and the boundary conditions \eqref{3Dbc}.
Hence the function $u$, defined in \eqref{doublecone3}, is  a homogeneous global solution to the double obstacle problem with obstacles $ p^1, p^2$.
The coincidence sets $ \{u =p^1\}= \mathcal{C}_1$ and $ \{u =p^2\}=\mathcal{C}_2$ are cones with a common vertex, thus $u$ is a double-cone solution.
\end{proof}

\par
It follows from our classification of blow-up solutions in $\mathbb{R}^2$ and from Example \ref{3Dexample}, that in  $\mathbb{R}^3$ there are at least four types of blow-ups;
polynomial, halfspace, double-cone solutions, and solutions, for which the free boundary is a union of four halfplanes. 
The complete analysis of  homogeneous global solutions and the regularity of the free boundary  for the double obstacle problem in dimension $ n=3$ 
we leave for a future publication.


\addcontentsline{toc}{section}{\numberline{}References}

\bibliographystyle{plain}
\bibliography{doubleobstacle}

\end{document}